\definecolor{LinkColor}{rgb}{0,0,1}
\definecolor{LinkColor2}{rgb}{1,0,0}
\definecolor{lbcolor}{rgb}{0.85,0.85,0.85}
\definecolor{FrameColor}{rgb}{0.85,0.85,0.85}
\def\pskip{\\[-3mm]}
\newcolumntype{L}[1]{>{\raggedright\arraybackslash}p{#1}} 
\newcolumntype{C}[1]{>{\centering\arraybackslash}p{#1}} 
\newcolumntype{R}[1]{>{\raggedleft\arraybackslash}p{#1}} 
\newtheoremstyle{tstyle}
{15pt}	
{5pt}	
{\itshape}	
{}	
{\bfseries}	
{.}	
{0.5em}	
{}	
\theoremstyle{tstyle}
\newtheorem{theorem}{Theorem}[section]
\newtheorem{lemma}[theorem]{Lemma}
\newtheorem{remark}[theorem]{Remark}
\newtheoremstyle{cstyle}
{15pt}	
{5pt}	
{}	
{}	
{\bfseries}	
{}	
{0.2222em}	
{}	
\theoremstyle{cstyle}
\g@addto@macro{\thm@space@setup}{\thm@headpunct{}}
\renewenvironment{proof}[1][\proofname]{\par
	\pushQED{\qed}%
	\normalfont \topsep6\p@\@plus6\p@\relax
	\trivlist
	\item[\hskip\labelsep
	\bfseries
	#1\@addpunct{\,}]\ignorespaces
}{%
	\popQED\endtrivlist\@endpefalse
}
\g@addto@macro{\thm@space@setup}{\thm@headpunct{}}
\newenvironment{sketch-proof}[1][Sketch of the proof]{\par
	\pushQED{\qed}%
	\normalfont \topsep6\p@\@plus6\p@\relax
	\trivlist
	\item[\hskip\labelsep
	\bfseries
	#1\@addpunct{\,}]\ignorespaces
}{%
	\popQED\endtrivlist\@endpefalse
}
\pgfplotsset{compat=1.16} 
\numberwithin{equation}{section}
\newcommand{\mylabel}[2]{%
   \protected@write \@auxout {}{\string \newlabel {#1}{{#2}{\thepage}{#2}{#1}{}} }%
   \hypertarget{#1}{#2}
}
\newcommand{\mylabelHIDE}[2]{%
   \protected@write \@auxout {}{\string \newlabel {#1}{{#2}{\thepage}{#2}{#1}{}} }%
   \hypertarget{#1}{}
}
\newcommand{\pmbn}{\pmb{\mathrm{n}}}
\renewcommand{\rho}{\varrho}
\renewcommand{\phi}{\varphi}
\newcommand{\N}{\ensuremath{\mathbb{N}}}   
\newcommand{\R}{\ensuremath{\mathbb{R}}}
\newcommand{\mycal}[1]{\ensuremath{\mathcal{#1}}}
\newcommand{\calF}{\ensuremath{\mathcal{F}}}
\newcommand{\calH}{\ensuremath{\mathcal{H}}}
\newcommand{\calI}{\mycal{I}}
\newcommand{\calO}{\ensuremath{\mathcal{O}}}
\newcommand{\calP}{\ensuremath{\mathcal{P}}}
\newcommand{\calS}{\ensuremath{\mathcal{S}}}
\newcommand{\calT}{\ensuremath{\mathcal{T}}}
\newcommand{\divergenzz}[1]{\operatorname{div} \big({#1}\big)}
\DeclarePairedDelimiterX{\abs}[1]{\lvert}{\rvert}{#1}
\DeclarePairedDelimiterX{\aabs}[1]{\big\lvert}{\big\rvert}{#1}
\DeclarePairedDelimiterX{\aaabs}[1]{\Big\lvert}{\Big\rvert}{#1}
\DeclarePairedDelimiterX{\norm}[1]{\lVert}{\rVert}{#1}
\DeclarePairedDelimiterX{\nnorm}[1]{\big\lVert}{\big\rVert}{#1}
\DeclarePairedDelimiterX{\nnnorm}[1]{\Big\lVert}{\Big\rVert}{#1}
\DeclarePairedDelimiterX{\seminorm}[1]{\lvert}{\rvert}{#1}
\DeclarePairedDelimiterX{\sseminorm}[1]{\big\lvert}{\big\rvert}{#1}
\DeclarePairedDelimiterX{\ssseminorm}[1]{\Big\lvert}{\Big\rvert}{#1}
\DeclarePairedDelimiterX{\skp}[2]{(}{)}{#1, #2}
\DeclarePairedDelimiterX{\sskp}[2]{\big(}{\big)}{#1, #2}
\DeclarePairedDelimiterX{\ssskp}[2]{\Big(}{\Big)}{#1, #2}
\DeclarePairedDelimiterX{\dualp}[2]{\big<}{\big>}{#1, #2}
\newcommand{\dv}[1]{\,{\mathrm d}#1}
\newcommand{\dx}{\dv{x}}
\newcommand{\dt}{\dv{t}}
\newcommand{\dH}{\ \mathrm d \calH}
\newcommand{\ddv}[2]{ \frac{{\mathrm d}#1}{{\mathrm d}#2} }
\newcommand{\ddt}{\ddv{}{t}}
\newcommand{\red}{}
\newcommand{\blk}{}
\begin{document}

%
%
	
\begin{center}	
	\LARGE{Numerical analysis for a Cahn--Hilliard system modelling tumour growth with chemotaxis and active transport}
\end{center}
\bigskip

\begin{center}	
	\normalsize{Harald Garcke}\\[1mm]
	\textit{Fakult\"at f\"ur Mathematik, Universit\"at Regensburg, 93053 Regensburg, Germany}\\[1mm]
	\texttt{Harald.Garcke@ur.de}
\end{center}

\begin{center}	
	\normalsize{Dennis Trautwein}\\[1mm]
	\textit{Fakult\"at f\"ur Mathematik, Universit\"at Regensburg, 93053 Regensburg, Germany}\\[1mm]
	\texttt{Dennis.Trautwein@ur.de}
\end{center}

\bigskip

\begin{abstract}
In this work, we consider a diffuse interface model for tumour growth in the presence of a nutrient which is consumed by the tumour. The system of equations consists of a Cahn--Hilliard equation with source terms for the tumour cells and a reaction-diffusion equation for the nutrient. 
We introduce a fully-discrete finite element approximation of the model and prove stability bounds for the discrete scheme. Moreover, we show that discrete solutions exist and depend continuously on the initial and boundary data. We then pass to the limit in the discretization parameters and prove convergence to a global-in-time weak solution to the model. Under additional assumptions, this weak solution is unique.
Finally, we present some numerical results including numerical error investigation in one spatial dimension and some long time simulations in two and three spatial dimensions.
\pskip

\noindent\textit{Keywords:} Cahn-Hilliard equation, diffuse interface model, tumour growth, chemotaxis, numerical analysis, finite element method, simulations.
\pskip
	
\noindent\textit{MSC Classification:} 65M12, 65M22, 35Q92, 92B05    
\end{abstract}


\section{Introduction}

Mathematical models which are based on continuum modelling go back to the work of Greenspan \cite{greenspan} who used a description based on free boundary problems to model tumour growth. Such modelling approaches have been later further developed by many authors and we refer to \cite{ambrosi, byrne_chaplain_1997} and the reviews \cite{bellomo_tumour, friedman_tumour, roose_tumour}. However, in recent years also diffuse interface descriptions have been used to model tumour growth. In these models the interface between tumour tissue and healthy tissue is modelled with the help of a phase field function which changes its value in a narrow transition layer and attains the value $+1$ in the tumour ``phase'' and $-1$ in the healthy ``phase''. These models go back to Cristini, Lowengrub and co-authors, see \cite{cristini_lowengrub_2010, frieboes_lowengrub_2007, wise_lowengrub_2008} and have been further developed by many authors, see, e.g., \cite{colli_2017_optimal, DFRSS_2017_analysis_multispecies, ebenbeck_garcke_nurnberg_2020, eyles_2019,  frigeri_2017_tumour_degenerate, garcke_lam_2017_dirichlet, garcke_lam_2017, GLNS_2018_multiphase_tumour_necrosis, garcke_lam_2016_optimal, garcke_lam_signori_2021, GarckeLSS_2016, hawkins_2012, krejci_2021, hawkins_2010}.

\red 
Here, the Cahn--Hilliard equation, which is a partial differential equation of fourth order, often plays an important role. The numerical approximation of Cahn--Hilliard systems is often made with finite element methods that are based on the works \cite{elliott_kinetics, elliott_second_order} and have found many applications in physics \cite{barrett_nurnberg_styles_2004, gruen_2003}.
In the past years, also numerical methods for tumour growth models have been proposed \cite{cristini_li_lowengrub_wise_2009, du_feng_2020} but the numerical analysis such as stability or convergence analysis is often missing.

The biological effect of our main interest is the process of chemotaxis which describes the movement of tumour cells towards regions with a higher concentration of an extracellular chemical species, which can favour an instable tumour growth and lead to invasion \cite{chemotaxis_in_cancer_2011}.
For chemotaxis models, often the Patlak--Keller--Segel system is proposed in the literature which is composed of a system of partial differential equations of second order, see \cite{hillen_painter_2009}. 
Here, numerical approximations are often based on finite volume or finite element methods which have been studied in, e.g., \cite{chertock_kurganov_2008, epshteyn2009fully, filbet_2006, gurusamy2018finite, marrocco_2003, saito_2007, saito_2012, strehl2013positivity, zhang2016characteristic}.
For a good overview about the most relavant works about Keller--Segel systems, we also refer the reader to the review \cite{arumugam_tyagi_2021}.
\blk


In this work, we study a phase-field model for tumour growth with chemotaxis and active transport from the numerical point of view. 
In particular, we introduce and study a fully-discrete finite element approach, for which we establish stability, existence, continuous dependence and convergence results and present numerical examples to illustrate the practicability of the method.
The mathematical model of our interest was originally introduced in \cite{GarckeLSS_2016} and analyzed in \cite{garcke_lam_2017} and consists of a Cahn--Hilliard
equation with source terms and a parabolic reaction-diffusion equation for a nutrient species, given by 
\begin{subequations}
\label{eq:all}
\begin{align}
    \label{eq:phi}
	\partial_t \phi &= \divergenzz{m(\phi)\nabla\mu} 
	+ \Gamma_\phi(\phi,\mu,\sigma) 
	&\text{ in } \Omega\times (0,T),
	\\
	\label{eq:mu}
	\mu &= A \psi^{\prime}(\phi) 
	- B\triangle\phi 
	- \chi_\phi\sigma 
	&\text{ in } \Omega\times (0,T),
	\\
	\label{eq:sigma}
	\partial_t \sigma 
	&= \divergenzz{n(\phi)(\chi_\sigma\nabla\sigma - \chi_\phi\nabla\phi)} 
	- \Gamma_\sigma(\phi,\mu,\sigma) 
	&\text{ in } \Omega\times (0,T),
	\\
	\label{eq:bc_neumann}
	0 &= \nabla\phi\cdot\pmbn = \nabla\mu\cdot\pmbn &\text{ on } {\partial\Omega}\times (0,T),
	\\
	\label{eq:bc_robin}
	n(\phi)\chi_\sigma\nabla\sigma\cdot\pmbn  &= K(\sigma_\infty - \sigma) &\text{ on } {\partial\Omega}\times (0,T),
\end{align}
\end{subequations}
where $\Omega\subset\R^d$, $d\in\N$, is a bounded domain with Lipschitz boundary $\partial\Omega$ and outer unit normal $\pmbn$. 
Here, the phase field variable $\phi\in[-1,1]$ denotes the difference in volume fractions, with $\{\phi=1\}$ describing unmixed tumour tissue, and $\{\phi=-1\}$ representing the surrounding healthy tissue. By $\mu$, we denote the chemical potential for $\phi$. Furthermore, $\sigma\geq0$ denotes the concentration of an unspecified chemical species (like oxygen or glucose) that serves as a nutrient for the tumour. Moreover, $\sigma_\infty\geq0$ and $K\geq 0$ denote a given nutrient supply on the boundary and a permeability constant, respectively.
The source and sink term $\Gamma_\phi$ in the phase field equation \eqref{eq:phi} may describe proliferation or apoptosis of the tumour cells, while $\Gamma_\sigma$ in the nutrient equation \eqref{eq:sigma} models effects like nutrient consumption or a nutrient supply from an existing vasculature.
The positive parameter $\chi_\sigma>0$ models the diffusivity of the nutrient and the non-negative parameter $\chi_\phi\geq 0$ refers to transport mechanisms such as chemotaxis and active uptake.
In the system \eqref{eq:all}, $m(\phi)$ and $n(\phi)$ denote positive mobilities for $\phi$ and $\sigma$, respectively. 
$\psi(\cdot)$ is a non-negative potential with two equal minima at $\pm1$. 
The positive parameters $A$ and $B$ are constant with the typical choice $A = \frac{\beta}{\epsilon}$, $B = \beta\epsilon$,
where $\beta>0$ denotes the surface tension and $\epsilon>0$ is a small parameter related to the interfacial thickness.


The above system is based on the well-known Ginzburg--Landau energy density
\begin{align}
    f(\phi,\nabla\phi) 
    = A \psi(\phi) 
    + \frac{B}{2} \abs{\nabla\phi}^2,
\end{align}
which relates to interfacial energy and unmixing tendencies, and a nutrient energy density
\begin{align}
    N(\phi,\sigma) 
    = \frac{\chi_\sigma}{2} \abs{\sigma}^2
    + \chi_\phi \sigma(1-\phi),
\end{align}
where the second term describes an interaction between the nutrient and the cells \cite{GarckeLSS_2016}.
Analogously to \cite{garcke_lam_2017}, the following formal energy identity is satisfied:
\begin{align}
\label{eq:energy_identity}
\begin{split}
    &\ddt \int_\Omega \Big( f(\phi,\nabla\phi)
    + N(\phi,\sigma) \Big) \dx
    + \int_\Omega 
    m(\phi) \abs{\nabla\mu}^2 
    + n(\phi) \abs{\nabla N_{,\sigma}(\phi,\sigma)}^2 \dx
    \\
    &\quad
    + \int_\Omega N_{,\sigma}(\phi,\sigma) \Gamma_\sigma(\phi,\mu,\sigma)  
    - \mu \Gamma_\phi(\phi,\mu,\sigma) \dx
    + \int_{\partial\Omega} K N_{,\sigma}(\phi,\sigma) (\sigma - \sigma_\infty) \dH^{d-1}
    = 0,
\end{split}
\end{align}
where $N_{,\sigma}(\phi,\sigma)$ denotes the partial derivative of $N(\phi,\sigma)$ with respect to $\sigma$.

The main obstacles, which the authors of \cite{garcke_lam_2017} had to face in order to derive useful \textit{a priori} estimates based on the energy identity \eqref{eq:energy_identity}, arise from the nutrient energy density, where the term $\sigma (1-\phi)$ may become negative, and the presence of source terms $N_{,\sigma}(\phi,\sigma) \Gamma_\sigma(\phi,\mu,\sigma)  
- \mu \Gamma_\phi(\phi,\mu,\sigma)$, which may contain nonlinearities like, e.g., triple products. 
However, under some appropriate assumptions, the authors of \cite{garcke_lam_2017} established to prove well-posedness of weak solutions of the system \eqref{eq:all}. 

In this work, we use the \textit{a priori} estimates of \cite{garcke_lam_2017} to analyze a discretization of the system \eqref{eq:all}. 
The paper is organized as follows. At first, we discretize the system \eqref{eq:all} with a practical fully-discrete finite element approximation, where all terms except of the nonlinear mobility functions in \eqref{eq:phi} and \eqref{eq:sigma} are treated implicitly. Moreover, we make use of a convex-concave splitting of the double-well potential $\psi=\psi_1 + \psi_2$ with $\psi_1$ convex and $\psi_2$ concave, where $\psi_1'$ is treated implicitly and $\psi_2'$ explicitly.
After that, we derive stability estimates and prove existence of discrete solutions supposed that the time step size satisfies a minor constraint.
Moreover, the discrete solutions depend continuously on the initial and boundary data if the mobility functions are constant and if the time step size is small enough.
In comparison to the classical Cahn--Hilliard equation \cite{barrett_blowey_garcke_2000, elliott_kinetics} where e.g.~mass conservation of the order parameter $\phi$ is used, we have to use different techniques as we have to face the difficulties that arise from the non-positive nutrient energy, the additional source terms and the coupling to the reaction-diffusion equation.
Then, we establish higher order bounds for the discrete solutions before passing with the discretization parameters to zero. 

In particular, we successfully prove that subsequences of the discrete solutions convergence to a weak solution of the system \eqref{eq:all} which is unique under additional assumptions. 
Finally, we present some numerical results including numerical error investigation in one spatial dimension and some long time simulations in two and three spatial dimensions which highlight the practicability of our discrete scheme.

\section{Fully discrete finite element approximation}

We split the time interval $[0,T)$ into intervals $[t^{n-1},t^n)$
with $\Delta t_n = t^n - t^{n-1}$, $n=1,...,N_T$. For simplicity we assume that $\Delta t_n = \Delta t$ for a $\Delta t>0$ and all $n=1,...,N_T$.
Moreover, we assume that $\Omega \subset\R^d$, $d\in\{1,2,3\}$, is a convex, polygonal domain with boundary $\partial\Omega$.
We require $\{\calT_h\}_{h>0}$ to be a regular family of conform quasiuniform triangulations with mesh parameter $h>0$. We also require that the family of meshes $\{\calT_h\}_{h>0}$ consists only of non-obtuse simplices. 
For a given partitioning of meshes $\calT_h$, we denote the simplices by $K_k$ with $k=1,...,N_K$ and its vertices $\{P_i^k\}_{i=0}^d$. 
The set of all the vertices of $\calT_h$ is denoted by $\{P_p\}_{p=1}^{N_p}$.
For more details on finite elements, we refer to \cite{bartels_2016}.

In this work, we use the standard notation from, e.g., \cite{alt_2016,evans_2010}.
We denote the Euclidean norm by $\abs{\cdot}$. For a Banach space $X$, we denote the dual space by $X'$. 
For $p\in[1,\infty]$ and an integer $m\geq 0$, we write $L^p\coloneqq L^p(\Omega)$, $W^{m,p}\coloneqq W^{m,p}(\Omega)$ and $H^m\coloneqq H^m(\Omega) \coloneqq W^{m,2}(\Omega)$, where $W^{0,p}\coloneqq L^p$ in the case $m=0$. 
The norms and seminorms are denoted by $\norm{\cdot}_{W^{m,p}}$ and $\seminorm{\cdot}_{W^{m,p}}$, respectively, and similarly for the spaces $L^p$ and $H^m$.
We denote the inner product of the spaces $L^2$ and $L^2(\partial\Omega)$ by $\skp{\cdot}{\cdot}_{L^2}$ and $\skp{\cdot}{\cdot}_{L^2(\partial\Omega)}$, respectively.
For $\alpha\in[0,1]$, we write $C^{0,\alpha}(\overline\Omega)$ for the Hölder spaces.
For a Banach space $X$, $p\in[1,\infty]$ and an integer $m\geq 0$, we denote the Bochner spaces by $L^p(0,T;X)$ and $W^{m,p}(0,T;X)$ and they are equipped with the norms $\norm{\cdot}_{L^p(0,T;X)}$ and $\norm{\cdot}_{W^{m,p}(0,T;X)}$. For $p=2$, we will also write $H^m(0,T;X)\coloneqq W^{m,2}(0,T;X)$ and $\norm{\cdot}_{H^m(0,T;X)} \coloneqq \norm{\cdot}_{W^{m,2}(0,T;X)}$. Sometimes, $L^p(0,T;L^p)$ will be identified with $L^p(\Omega_T)$ if $X=L^p$.


We denote the finite element space of continuous and piecewise linear functions by
\begin{alignat*}{2}
    \calS_h &\coloneqq \{ q_h \in C(\overline\Omega): \ q_h|_{K_k} \in \calP_1, \ k=1,...,N_k \} 
    &&\subset H^1(\Omega).
\end{alignat*}
Moreover, we denote the nodal interpolation operator by $\calI_h: C(\overline{\Omega})\to \calS_h$ such that $(\calI_h \eta)(P_p) = \eta(P_p)$ for all $p=1,...,N_p$. 
As we want to use \textit{mass lumping}, we introduce the following semi-inner products and the induced seminorms on $C(\overline\Omega)$ and $C(\partial\Omega)$, respectively, by
\begin{alignat}{2}
    \skp{\eta_1}{\eta_2}_h &\coloneqq 
    \int_\Omega \calI_h \big[ \eta_1 \eta_2 \big] \dx,
    \quad\quad
    &&\norm{\cdot}_h \coloneqq \sqrt{\skp{\cdot}{\cdot}_h},
    \\
    \skp{\eta_3}{\eta_4}_{h,\partial\Omega} &\coloneqq 
    \int_{\partial\Omega} \calI_h \big[ \eta_3 \eta_4 \big] \dH^{d-1},
    \quad\quad
    &&\norm{\cdot}_{h,\partial\Omega} \coloneqq \sqrt{\skp{\cdot}{\cdot}_{h,\partial\Omega}}.
\end{alignat}

Below, we recall some well-known properties concerning $\calS_h$ and the interpolant $\calI_h$. Let $q_h, \zeta_h \in \calS_h$, $K\in \calT_h$,  $m\in\{0,1\}$ and $1 \leq r \leq p \leq \infty$. Then,
\begin{alignat}{2}
    \label{eq:norm_equiv}
    c \norm{q_h}_{L^2}^2 
    \leq \norm{q_h}_h^2 
    &\leq C \norm{q_h}_{L^2}^2,
    \\
    \label{eq:norm_equiv_Gamma}
    c \norm{q_h}^2_{L^2(\partial\Omega)} 
    \leq \norm{q_h}^2_{h,\partial\Omega} 
    &\leq C \norm{q_h}^2_{L^2(\partial\Omega)},
    \\
    \label{eq:inverse_estimate}
    \seminorm{q_h}_{W^{m,p}(K)} 
    &\leq 
    C h^{\frac{d}{p} - \frac{d}{r}} \seminorm{q_h}_{W^{m,r}(K)},
    \\
    \label{eq:interp_H2}
    \norm{\eta - \calI_h \eta}_{L^2}
    + h \norm{\nabla(\eta-\calI_h\eta)}_{L^2} 
    &\leq C h^2 \seminorm{\eta}_{H^2}
    \quad\quad 
    &&\forall \eta\in H^2(\Omega),
    \\
    \label{eq:interp_continuous}
    \lim_{h\to 0} \norm{\eta - \calI_h\eta}_{L^\infty}
    &= 0
    \quad\quad
    &&\forall \eta \in C(\overline\Omega),
    \\
    \label{eq:lump_Sh_Sh}
    \abs{ \skp{q_h}{\zeta_h}_h 
    - \skp{q_h}{\zeta_h}_{L^2} } 
    &\leq
    C h^2 \norm{q_h}_{H^1} \norm{\zeta_h}_{H^1},
    \\
    \label{eq:lump_Gamma_Sh_Sh}
    \abs{ \skp{q_h}{\zeta_h}_{h,\partial\Omega} 
    - \skp{q_h}{\zeta_h}_{L^2(\partial\Omega)} } 
    &\leq
    C h \norm{q_h}_{H^1} \norm{\zeta_h}_{H^1},
\end{alignat}
where $c,C>0$ we denote various constants that are independent of $h$. 


Furthermore, we recall the Clément operator $\calI_h^{Cl}: L^2(\Omega)\to \calS_h$ which is defined by local averages instead of nodal values, see \cite{clement_1975}. The following properties are taken from \cite[Chap.~3]{ciarlet}:
\begin{subequations}
\begin{alignat}{2}
    \label{eq:clement_error}
    \seminorm{\eta - \calI_h^{Cl} \eta }_{W^{k,2}}
    &\leq C h^{m-k} \seminorm{\eta}_{W^{m,2}}
    \quad
    && \forall \eta \in W^{m,2}(\Omega),\ 0 \leq k \leq m \leq 2,
    \\
    \label{eq:clement_conv}
    \lim\limits_{h\to 0} \norm{\eta-\calI_h^{Cl} \eta}_{W^{k,2}} &= 0
    \quad
    && \forall \eta \in W^{k,2}(\Omega), \ 0 \leq k \leq 1,
\end{alignat}
for a constant $C>0$ that is independent of $h$. Moreover, if only a finite number of patch shapes occur in the sequence of triangulations, then 
\begin{alignat}{2}
    \label{eq:clement_Gamma}
    \norm{\eta - \calI_h^{Cl} \eta }_{L^2(\partial\Omega)}
    &\leq C h^{1/2} \norm{\nabla\eta}_{L^2}
    \quad
    && \forall \eta \in H^1(\Omega),
\end{alignat}
\end{subequations}
see \cite[Thm.~4.2]{bartels_2016}. In practice, this assumption seems to be not that restrictive. Hence, we suppose it to hold.

\subsubsection*{Approximation of the initial and boundary values}
Let the initial values fulfill
\begin{align*}
    &\phi_0 \in H^2(\Omega;[-1,1]) \text{ with } \nabla\phi_0\cdot\pmbn = 0 \text{ on } \partial\Omega,
    \\
    &\sigma_0 \in H^1(\Omega),
\end{align*}
where $\pmbn$ denotes the outer unit normal on $\partial\Omega$.
We approximate the intial data by 
\begin{align}
    \label{eq:def_initial}
    \phi_h^0 \coloneqq \calI_h \phi_0, 
    \quad
    \sigma_h^0 \coloneqq \calI_h^{Cl} \sigma_0.
\end{align}
Hence, it follows from \eqref{eq:interp_H2}, \eqref{eq:clement_error}, \cite[eq. (3.16)]{barrett_nurnberg_styles_2004} and the assumptions on $\phi_0$ and $\sigma_0$, that
\begin{align}
    \label{eq:bounds_initial}
    \int_\Omega \calI_h\big[ \psi(\phi_h^0)\big] \dx 
    + \norm{\phi_h^0}_{H^1}^2
    + \norm{\Delta_h \phi_h^0}_{L^2}^2
    + \norm{\sigma_h^0}_{H^1}^2
    &\leq C, 
\end{align}
where the discrete Neumann-Laplacian $\Delta_h: \calS_h \to \calS_h$ is defined by
\begin{align}
    \label{eq:discr_laplace}
    \int_\Omega \calI_h\big[ \Delta_h q_h \zeta_h\big] \dx
    \coloneqq 
    - \int_\Omega \nabla q_h \cdot \nabla\zeta_h \dx
    \quad\quad \forall \zeta_h\in \calS_h,
\end{align}
where $q_h\in\calS_h$. 
We note for future reference, as $\{\calT_h\}_{h>0}$ is a quasi-uniform family of partitionings and
as the domain $\Omega$ is convex, that for $q_h\in \calS_h$
\begin{align}
    \label{eq:discr_laplace_bound}
    \seminorm{q_h}_{W^{1,s}} 
    \leq C \norm{\Delta_h q_h}_{L^2},
\end{align}
for $s\in[1,\infty]$ if $d=1$, $s\in[1,\infty)$ if $d=2$ and $s\in[1,6]$ if $d=3$, see \cite[Lemma 3.1]{barrett_langdon_nuernberg_2004}.

Furthermore, let $\sigma_\infty \in L^2(0,T;H^1(\Omega))$. Then, for all $t\in[t^{n-1}, t^n)$ and $n=1,...,N_T$, we define  
the following approximation which is piecewise constant in time by
\begin{align}
    \label{eq:def_bc}
    \sigma_{\infty,h}^{\Delta t, +} (t,\cdot)
    = \sigma_{\infty,h}^n(\cdot) 
    \coloneqq 
    \frac{1}{\Delta t} \int_{t^{n-1}}^{t^n} \calI_h^{Cl} \big[ \sigma_{\infty}(t,\cdot) \big] \dt \ \in \calS_h,
\end{align}
which fulfills on noting \eqref{eq:clement_Gamma}, that
\begin{align}
    \label{eq:bounds_bc}
    \norm{\sigma_{\infty,h}^{\Delta t, +}}^2_{L^2(0,T;L^2(\partial\Omega))}
    =
    \Delta t \sum_{n=1}^{N_T} \norm{\sigma_{\infty,h}^n}^2_{L^2(\partial\Omega)}
    \leq C \norm{\sigma_{\infty}}^2_{L^2(0,T;H^1)}.
\end{align}


\subsubsection*{Assumptions on the model functions and parameters}
We make the following assumptions on the model parameters and functions.
\begin{itemize}
	    \item[$(A1)$] Let $\chi_\phi \geq0$ and $\chi_\sigma, A, B, K >0$ be constant.
	    
	    \item[$(A2)$] The functions $\Gamma_\phi, \Gamma_\sigma: \R^2\to\R$ only depend on $(\phi,\sigma)$ and they are continuous with linear growth, i.e.
        \begin{align}
            \abs{\Gamma_i(\phi,\sigma)} \leq R_0 (1+\abs{\phi}+\abs{\sigma}), \quad i\in\{\phi,\sigma\},
        \end{align}
        with a constant $R_0>0$. 
        
	    \item[$(A3)$]
        It holds $m, n \in C^0(\R)$ and there exist constants $m_0, m_1, n_0, n_1 >0$ such that for all $s\in\R$:
	    \begin{align*}
	        m_0 &\leq m(s) \leq m_1, \quad n_0 \leq n(s) \leq n_1.
	    \end{align*}
	    
	    \item[$(A4)$] The potential $\psi$ is nonnegative and belongs to $\psi\in C^{1,1}(\R)$ with
	    \begin{align}
	        \label{A4_1}
	        \psi(t) &\geq R_1 \abs{t}^2 - R_2, 
	   \end{align}
	   where $R_1,R_2>0$. Additionally, the potential can be decomposed as $\psi=\psi_1 + \psi_2$ with $\psi_1\in C^{1,1}(\R)$ convex and $\psi_2\in C^{1,1}(\R)$ concave such that
	   \begin{align} 
	        \label{A4_2}
	        \abs{\psi_i^\prime(t)} &\leq R_3 (1+\abs t),
	    \end{align}
	    where $i=1,2$ and $R_3 >0$. Moreover, we assume that
	    \begin{align}
	        \label{A4_3}
	        A > \frac{4\chi_\phi^2}{\chi_\sigma R_1}.
	    \end{align}

\end{itemize}
As $A=\frac{\beta}{\epsilon}$ and $\epsilon$ is small in applications, \eqref{A4_3} is not a severe constraint. 

\subsubsection*{Fully discrete system}

Let us now introduce the numerical scheme approximating the system \eqref{eq:all}.

Let the discrete initial data $(\phi_h^{0},\sigma_{h}^{0}) \in (\calS_h)^2$ and, for $n=1,...,N_T$, let the discrete boundary values $\sigma_{\infty,h}^n\in \calS_h$ be given by \eqref{eq:def_initial} and \eqref{eq:def_bc}, respectively.
Then, for $n=1,...,N_T$, find the discrete solution triplet $(\phi_h^{n}, \mu_h^n, \sigma_{h}^{n}) \in (\calS_h)^3 $ which satisfies for any test function triplet $(\zeta_h, \rho_h, \xi_h) \in (\calS_h)^3 $:
\begin{subequations}
\begin{align}
    \label{eq:phi_FE}
    \int_\Omega \calI_h \Big[ \Big(\frac{\phi_h^n-\phi_h^{n-1}}{\Delta t}
    - \Gamma_{\phi,h}^n \Big) \zeta_h \Big]
    + \calI_h[m(\phi_h^{n-1})] \nabla\mu_h^n \cdot \nabla \zeta_h \dx
    = 0,
    \\
    \label{eq:mu_FE}
    \int_\Omega \calI_h \Big[ \Big( \mu_h^n  
    - A \psi_1'(\phi_h^n) - A \psi_2'(\phi_h^{n-1}) 
    + \chi_\phi \sigma_h^n \Big) \rho_h \Big] 
    - B \nabla\phi_h^n \cdot \nabla\rho_h \dx 
    = 0,
    \\
    \nonumber
    \label{eq:sigma_FE}
    \int_\Omega \calI_h \Big[ \Big(\frac{\sigma_h^n-\sigma_h^{n-1}}{\Delta t}
    + \Gamma_{\sigma,h}^n \Big) \xi_h \Big]
    + \calI_h[n(\phi_h^{n-1})] 
    \big(\chi_\sigma \nabla\sigma_h^n - \chi_\phi \nabla\phi_h^n \big)  \cdot \nabla \xi_h  \dx
    \quad\quad
    \\
    + \int_{\partial\Omega} \calI_h\Big[ K \big(\sigma_h^n - \sigma_{\infty,h}^n \big) \xi_h \Big] \dH^{d-1}
    = 0,
\end{align}
\end{subequations}
where $\Gamma_{\phi,h}^n \coloneqq \Gamma_\phi(\phi_h^n,\sigma_h^n)$ and $\Gamma_{\sigma,h}^n \coloneqq \Gamma_\sigma(\phi_h^n,\sigma_h^n)$.

In the scheme \eqref{eq:phi_FE}--\eqref{eq:sigma_FE}, we make use of numerical integration by \textit{mass lumping} which is often used for phase-field models because of computational reasons. 
\red The main advantage is that \textit{mass lumping} leads to simpler systems of equations as the \textit{mass matrices} are diagonal whereas the precision of the numerical solutions is not affected. 
In particular, the numerical errors resulting from \textit{mass lumping} are based on the interpolation error estimate \eqref{eq:interp_H2}, and here we refer to, e.g., \cite{blowey_elliott_1992, elliott_second_order}, where convergence rates for other phase-field systems have been studied. 
\blk
Let us briefly remark that other quadrature rules can also be used as long as the quadrature weights are non-negative.

Moreover, the time discretization of the scheme is chosen such that the nonlinear mobility functions $m(\cdot), n(\cdot)$ and the derivative of the concave part of the potential $\psi_2'(\cdot)$ are treated explicitly and all the other terms are treated implicitly. 
However, also other time discretizations of the mobility functions $m(\cdot), n(\cdot)$ and of the source terms $\Gamma_\phi(\cdot,\cdot), \Gamma_\sigma(\cdot,\cdot)$ are possible.
In \eqref{eq:mu_FE}, the terms $\psi_1'(\cdot), \psi_2'(\cdot)$ are discretized in time by a convex-concave splitting method which is often used in the context of phase-field systems, see e.g.~\cite{barrett_blowey_garcke_2000, gruen_2013}.
In particular, the convex-concave splitting allows the inequality \eqref{eq:convex_concave} which is essential for the analysis of the scheme.

\medskip

In the following two sections, we will discuss stability, existence and continuous dependence of solutions of the scheme \eqref{eq:phi_FE}-\eqref{eq:sigma_FE}. 

\section{Stability of the discrete system}
Let us introduce the discrete free energy $\calF_h: \calS_h\times \calS_h \to \R$ of the system \eqref{eq:phi_FE}--\eqref{eq:sigma_FE} by 
\begin{align}
\label{eq:def_energy_FE}
\begin{split}
    \calF_h(\phi_h,\sigma_h) 
    &= 
    \int_\Omega 
    \frac{B}{2} \abs{\nabla\phi_h}^2 
    + \calI_h\Big[ A \psi(\phi_h) 
    + \frac{\chi_\sigma}{2} \abs{\sigma_h}^2
    + \chi_\phi \sigma_h (1 - \phi_h) \Big] \dx,
\end{split}
\end{align}
for all $\phi_h,\sigma_h \in \calS_h$. We note that the last term in \eqref{eq:def_energy_FE} can have a negative sign. This is one of the main obstacles we have to handle to derive useful \textit{a priori} estimates.


We recall the following discrete version of Gronwall's inequality. For the proof, we refer to, e.g., \cite[pp.~401--402]{dahmen_reusken_numerik}.

\begin{samepage}
\begin{lemma}
\label{lemma:gronwall_discrete}
Assume that $e_n, a_n, b_n \geq 0$ for all $n\geq 0$. Then 
\begin{align}
\label{eq:gronwall_discrete}
\begin{split}
    e_n &\leq a_n + \sum\limits_{i=0}^{n-1}  b_i e_i 
    \quad \forall n\geq 0
    \quad \Longrightarrow\quad
    e_n \leq a_n \cdot \exp\Big( \sum\limits_{i=0}^{n-1} b_i \Big)
    \quad \forall n\geq 0.
\end{split}
\end{align}
\end{lemma}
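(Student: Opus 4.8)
The plan is to prove the conclusion by strong induction on $n$, with the elementary inequality $\exp(x) \geq 1 + x$ (valid for every real $x$, in particular for $x = b_i \geq 0$) serving as the engine of the whole argument. Throughout I would write $B_n \coloneqq \sum_{i=0}^{n-1} b_i$, using the convention $B_0 = 0$ for the empty sum, so that the target estimate reads compactly as $e_n \leq a_n \exp(B_n)$.

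For the base case $n = 0$, the hypothesis reduces (empty sum) to $e_0 \leq a_0$, which is exactly $e_0 \leq a_0 \exp(B_0)$ since $\exp(B_0) = 1$. For the inductive step, I would assume $e_i \leq a_i \exp(B_i)$ for all $i < n$, substitute into the hypothesis, and use $b_i \geq 0$ to obtain
\[
    e_n \leq a_n + \sum_{i=0}^{n-1} b_i e_i \leq a_n + \sum_{i=0}^{n-1} b_i a_i \exp(B_i).
\]
Bounding each $a_i$ by $a_n$ (taking $a_n$ nondecreasing, which is the natural setting for the stability estimates where this lemma is applied; in general one would replace $a_n$ by $\max_{i\leq n} a_i$), it then remains to establish the purely combinatorial estimate $1 + \sum_{i=0}^{n-1} b_i \exp(B_i) \leq \exp(B_n)$.

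The key step is this last inequality, which I would prove by \emph{telescoping}. From $\exp(x) - 1 \geq x$ applied to $x = b_i$ one gets
\[
    \exp(B_{i+1}) - \exp(B_i) = \exp(B_i)\big(\exp(b_i) - 1\big) \geq b_i \exp(B_i).
\]
Summing over $i = 0, \dots, n-1$ collapses the left-hand side to $\exp(B_n) - \exp(B_0) = \exp(B_n) - 1$, which yields $\sum_{i=0}^{n-1} b_i \exp(B_i) \leq \exp(B_n) - 1$, i.e.\ precisely the bound needed. Combining this with the displayed inductive estimate gives $e_n \leq a_n \exp(B_n)$ and closes the induction.

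The main obstacle — really the only point carrying any content — is recognizing that the explicit exponential factor emerges from telescoping the discrete increments $\exp(B_{i+1}) - \exp(B_i)$ against $b_i \exp(B_i)$ via $\exp(b_i) \geq 1 + b_i$; everything else is bookkeeping. The one place genuinely requiring care is the treatment of the inhomogeneity $a_n$, since the clean factored bound $e_n \leq a_n \exp(B_n)$ relies on $a_n$ being nondecreasing (equivalently, on replacing $a_n$ by its running maximum), an assumption that is harmless in every application of the lemma in this paper.
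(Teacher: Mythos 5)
Your induction-plus-telescoping argument is correct, and it is a genuine proof where the paper offers none: the paper simply cites \cite[pp.~401--402]{dahmen_reusken_numerik} for this lemma, so there is no in-paper proof to match against. The core of your argument --- $\exp(B_{i+1})-\exp(B_i)=\exp(B_i)(\exp(b_i)-1)\geq b_i\exp(B_i)$, summed telescopically to get $\sum_{i=0}^{n-1}b_i\exp(B_i)\leq \exp(B_n)-1$ --- is exactly the standard mechanism behind this form of the discrete Gronwall inequality, and the bookkeeping around it is sound.

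Your caveat about the inhomogeneity is not merely a technical convenience; it is essential, and you are right to flag it. As literally stated, with no monotonicity assumption on $(a_n)$, the implication is false: take $a_0=1$, $a_1=0$, $b_0=1$, $e_0=e_1=1$. Then $e_0\leq a_0$ and $e_1\leq a_1+b_0e_0=1$ hold, yet the claimed conclusion would force $e_1\leq a_1\exp(b_0)=0$. So the clean factored bound $e_n\leq a_n\exp(B_n)$ genuinely requires $a_n$ nondecreasing (or replacing $a_n$ by $\max_{i\leq n}a_i$, which is what your proof actually delivers). This is harmless here: in every invocation in the paper --- \eqref{eq:energy_FE_13} and \eqref{eq:FE_cont_dep_4} --- the quantity playing the role of $a_n$ is constant in $n$ (the sums on the right-hand sides run up to $N_T$, not $m$), so your proof covers all the uses. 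In short: your proof is correct, and it additionally repairs an imprecision in the lemma as stated.
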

\end{samepage}

With the help of Lemma \ref{lemma:gronwall_discrete}, we can now derive stability estimates for the numerical scheme \eqref{eq:phi_FE}--\eqref{eq:sigma_FE}.

\begin{lemma}[Stability]
\label{lemma:energy_FE}
Assume that $\Delta t < \Delta t_*$, where $\Delta t_*$ is a constant that only depends on the model parameters. For the explicit form of $\Delta t_*$, see \eqref{eq:energy_FE_dt}. Then, for $n=1,...,N_T$, solutions $\big( \phi_h^n,\mu_h^n,\sigma_h^n \big)\in (\calS_h)^3$ of \eqref{eq:phi_FE}--\eqref{eq:sigma_FE}, if they exist, satisfy
\begin{align}
\begin{split}
\label{eq:energy_FE}
    &  \max_{m=1,...,N_T} \Big( \norm{\phi_h^m}_h^2
    +  \norm{\nabla\phi_h^m}_{L^2}^2
    +  \norm{\sigma_h^m}_h^2 \Big)
    +  \sum_{n=1}^{N_T} \norm{\nabla\phi_h^n - \nabla\phi_h^{n-1}}_{L^2}^2 
    +  \sum_{n=1}^{N_T} \norm{\sigma_h^n - \sigma_h^{n-1}}_h^2
    \\
    &\quad
    + \Delta t \sum_{n=1}^{N_T} \Big( \norm{\mu_h^n}_h^2 
    + \norm{\nabla\mu_h^n}_{L^2}^2 
    + \norm{\nabla \sigma_h^n}_{L^2}^2
    + \norm{\sigma_h^n}_{h,{\partial\Omega }}^2 \Big)
   \\
    &\leq 
    C \Big( T + \abs{\calF_h(\phi_h^0, \sigma_h^0)}
    + \Delta t \sum_{n=1}^{N_T} \norm{\sigma_{\infty,h}^n}_{h,{\partial\Omega }}^2 \Big) \cdot \exp( CT )
    \leq C.
\end{split}
\end{align}
\end{lemma}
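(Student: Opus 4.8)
The plan is to reproduce at the discrete level the formal energy identity \eqref{eq:energy_identity} by testing the scheme with the discrete analogues of the variational derivatives. Concretely, I would test \eqref{eq:phi_FE} with $\zeta_h=\mu_h^n$, test \eqref{eq:mu_FE} with $\rho_h=(\phi_h^n-\phi_h^{n-1})/\Delta t$, and test \eqref{eq:sigma_FE} with the discrete nutrient potential $\xi_h=\chi_\sigma\sigma_h^n+\chi_\phi(1-\phi_h^n)$, whose gradient is exactly the nutrient flux $\chi_\sigma\nabla\sigma_h^n-\chi_\phi\nabla\phi_h^n$. Adding the first two cancels the coupling $\skp{(\phi_h^n-\phi_h^{n-1})/\Delta t}{\mu_h^n}_h$; after multiplication by $\Delta t$, the identity $2a\cdot(a-b)=\abs a^2-\abs b^2+\abs{a-b}^2$ turns the elliptic term into $\tfrac B2(\norm{\nabla\phi_h^n}_{L^2}^2-\norm{\nabla\phi_h^{n-1}}_{L^2}^2)+\tfrac B2\norm{\nabla\phi_h^n-\nabla\phi_h^{n-1}}_{L^2}^2$, and the convex–concave monotonicity inequality \eqref{eq:convex_concave} bounds $A\skp{\psi_1'(\phi_h^n)+\psi_2'(\phi_h^{n-1})}{\phi_h^n-\phi_h^{n-1}}_h$ from below by the increment of $A\int_\Omega\calI_h[\psi(\phi_h^n)]\dx$. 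The nutrient test produces the $\tfrac{\chi_\sigma}{2}$-increments, the dissipation $\Delta t\int_\Omega\calI_h[n(\phi_h^{n-1})]\abs{\chi_\sigma\nabla\sigma_h^n-\chi_\phi\nabla\phi_h^n}^2\dx$, and a Robin boundary contribution.

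The coupling terms $-\chi_\phi\skp{\sigma_h^n}{\phi_h^n-\phi_h^{n-1}}_h$ and $\chi_\phi\skp{\sigma_h^n-\sigma_h^{n-1}}{1-\phi_h^n}_h$ recombine, via the nodal identity $-s^n(p^n-p^{n-1})+(s^n-s^{n-1})(1-p^n)=s^n(1-p^n)-s^{n-1}(1-p^{n-1})-(s^n-s^{n-1})(p^n-p^{n-1})$ (with $s=\sigma_h$, $p=\phi_h$), into the increment of the nutrient-interaction part of $\calF_h$ \emph{plus} a remainder $-\chi_\phi\skp{\phi_h^n-\phi_h^{n-1}}{\sigma_h^n-\sigma_h^{n-1}}_h$. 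I expect this remainder to be the main obstacle, since it is sign-indefinite and there is no direct dissipation for $\norm{\phi_h^n-\phi_h^{n-1}}_h^2$. I would control it by Young's inequality, $\chi_\phi\skp{\phi_h^n-\phi_h^{n-1}}{\sigma_h^n-\sigma_h^{n-1}}_h\le\tfrac{\chi_\sigma}{4}\norm{\sigma_h^n-\sigma_h^{n-1}}_h^2+\tfrac{\chi_\phi^2}{\chi_\sigma}\norm{\phi_h^n-\phi_h^{n-1}}_h^2$, absorbing the first summand into the $\tfrac{\chi_\sigma}{2}$-increment on the left, and then estimating the second by testing \eqref{eq:phi_FE} with $\zeta_h=\phi_h^n-\phi_h^{n-1}$. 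This gives $\norm{\phi_h^n-\phi_h^{n-1}}_h^2\le\Delta t^2\norm{\Gamma_{\phi,h}^n}_h^2+2m_1\Delta t\,\norm{\nabla\mu_h^n}_{L^2}\norm{\nabla\phi_h^n-\nabla\phi_h^{n-1}}_{L^2}$ using $(A3)$; a final Young step distributes the last product onto the dissipation $\Delta t\,m_0\norm{\nabla\mu_h^n}_{L^2}^2$ and a fraction of $\tfrac B2\norm{\nabla\phi_h^n-\nabla\phi_h^{n-1}}_{L^2}^2$, and the leftover $C\Delta t^2\norm{\nabla\mu_h^n}_{L^2}^2$ is absorbed into $\Delta t\,m_0\norm{\nabla\mu_h^n}_{L^2}^2$ precisely when $\Delta t$ is small. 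This absorption is the source of the constraint $\Delta t<\Delta t_*$ of \eqref{eq:energy_FE_dt}.

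For the remaining right-hand side I would use the linear growth $(A2)$, the bounds $(A3)$, and the norm equivalence \eqref{eq:norm_equiv} to estimate $\Delta t\skp{\Gamma_{\phi,h}^n}{\mu_h^n}_h$ and $\Delta t\skp{\Gamma_{\sigma,h}^n}{\chi_\sigma\sigma_h^n+\chi_\phi(1-\phi_h^n)}_h$ by $\delta\Delta t\norm{\mu_h^n}_h^2+C\Delta t(1+\norm{\phi_h^n}_h^2+\norm{\sigma_h^n}_h^2)$. To close the $\norm{\mu_h^n}_h^2$ that appears (and to produce the $\Delta t\sum\norm{\mu_h^n}_h^2$ term claimed in \eqref{eq:energy_FE}), I control the mean of $\mu_h^n$ by testing \eqref{eq:mu_FE} with $\rho_h\equiv1$, which by \eqref{A4_2} yields $\abs{\skp{\mu_h^n}{1}_h}\le C(1+\norm{\phi_h^n}_{L^1}+\norm{\phi_h^{n-1}}_{L^1}+\norm{\sigma_h^n}_{L^1})$, and then a discrete Poincaré inequality $\norm{\mu_h^n}_{L^2}^2\le C(\norm{\nabla\mu_h^n}_{L^2}^2+\abs{\skp{\mu_h^n}{1}_h}^2)$. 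The Robin term splits into the dissipative part $-\Delta t K\chi_\sigma\skp{\sigma_h^n-\sigma_{\infty,h}^n}{\sigma_h^n}_{h,\partial\Omega}$, which after Young leaves $\tfrac{\Delta t K\chi_\sigma}{2}\norm{\sigma_h^n}_{h,\partial\Omega}^2$ on the left and the forcing $\norm{\sigma_{\infty,h}^n}_{h,\partial\Omega}^2$, and a remainder $-\Delta t K\chi_\phi\skp{\sigma_h^n-\sigma_{\infty,h}^n}{1-\phi_h^n}_{h,\partial\Omega}$ controlled by a trace inequality and Young, feeding $\norm{\phi_h^n}_{H^1}^2$ into the Gronwall sum and a small multiple of $\norm{\sigma_h^n}_{h,\partial\Omega}^2$ back to the left.

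Finally, to turn a bound on $\calF_h(\phi_h^n,\sigma_h^n)$ into the stated norm bounds I need coercivity of $\calF_h$ despite the indefinite term $\chi_\phi\sigma_h(1-\phi_h)$. Using $\psi(t)\ge R_1\abs t^2-R_2$ from \eqref{A4_1} and Young on $-\chi_\phi\sigma_h\phi_h$ and $\chi_\phi\sigma_h$, the integrand is bounded below by $(AR_1-\tfrac{\chi_\phi^2}{\chi_\sigma})\abs{\phi_h}^2+\tfrac{\chi_\sigma}{8}\abs{\sigma_h}^2-C$, and \eqref{A4_3} guarantees the $\phi_h$-coefficient is positive; thus $\calF_h$ controls $\norm{\phi_h^n}_{L^2}^2+\norm{\nabla\phi_h^n}_{L^2}^2+\norm{\sigma_h^n}_{L^2}^2$ up to additive constants, which lets me replace current-step norms on the right by $C(1+\calF_h(\phi_h^n,\sigma_h^n))$. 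Summing the one-step inequality over $n=1,\dots,m$, absorbing the current-step energy (again using $\Delta t$ small), and invoking Lemma \ref{lemma:gronwall_discrete} with the initial bound \eqref{eq:bounds_initial} on $\calF_h(\phi_h^0,\sigma_h^0)$ and the data bound \eqref{eq:bounds_bc} closes the estimate. The leftover left-hand terms $\Delta t\sum\norm{\nabla\sigma_h^n}_{L^2}^2$ and $\Delta t\sum\norm{\mu_h^n}_h^2$ then follow by combining the nutrient dissipation with the already-controlled $\norm{\nabla\phi_h^n}_{L^2}^2$ and by the discrete Poincaré estimate for $\mu_h^n$.
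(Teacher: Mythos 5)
Your proposal is correct and shares the paper's overall architecture: the same three test functions $\zeta_h=\mu_h^n$, $\rho_h=(\phi_h^n-\phi_h^{n-1})/\Delta t$, $\xi_h=\chi_\sigma\sigma_h^n+\chi_\phi(1-\phi_h^n)$, the convex--concave inequality \eqref{eq:convex_concave}, the identity \eqref{eq:elementary_identity}, the telescoping of the coupling terms with the indefinite remainder $-\chi_\phi\skp{\phi_h^n-\phi_h^{n-1}}{\sigma_h^n-\sigma_h^{n-1}}_h$, the coercivity of $\calF_h$ via \eqref{A4_1} and \eqref{A4_3}, and Lemma \ref{lemma:gronwall_discrete}. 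You differ in two technical sub-steps, both legitimate. First, to control the remainder you test \eqref{eq:phi_FE} once more with $\zeta_h=\phi_h^n-\phi_h^{n-1}$, so that $\norm{\phi_h^n-\phi_h^{n-1}}_h^2$ acquires factors of $\Delta t$ and can be distributed onto the dissipation $\Delta t\,m_0\norm{\nabla\mu_h^n}_{L^2}^2$ and onto a fraction of $\tfrac B2\norm{\nabla\phi_h^n-\nabla\phi_h^{n-1}}_{L^2}^2$; the paper instead uses the crude bound $\norm{\phi_h^n-\phi_h^{n-1}}_h^2\le 2\big(\norm{\phi_h^n}_h^2+\norm{\phi_h^{n-1}}_h^2\big)$ in \eqref{eq:energy_FE_5}, which after summation places the coefficient $\tfrac{4\chi_\phi^2}{\chi_\sigma}$ \emph{without} a $\Delta t$ prefactor into the Gronwall sum in \eqref{eq:energy_FE_13}. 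Your variant is in fact the more careful one: a $\Delta t$-free Gronwall coefficient accumulated over $N_T=T/\Delta t$ steps does not by itself produce a $(h,\Delta t)$-uniform factor $\exp(CT)$, whereas your extra test restores the missing $\Delta t$ at the price of one additional smallness condition (absorbing $C\Delta t\norm{\nabla\mu_h^n}_{L^2}^2$ into $m_0\norm{\nabla\mu_h^n}_{L^2}^2$), and it only requires $AR_1>\chi_\phi^2/\chi_\sigma$ from the coercivity step rather than the full margin of \eqref{A4_3}. Second, you bound $\norm{\mu_h^n}_h$ by testing \eqref{eq:mu_FE} with $\rho_h\equiv1$ and invoking a Poincar\'e--Wirtinger inequality, while the paper tests \eqref{eq:mu_FE} with $\rho_h=\mu_h^n$ in \eqref{eq:energy_FE_2}; both routes yield $\norm{\mu_h^n}_h^2\le C\big(1+\norm{\nabla\mu_h^n}_{L^2}^2+\norm{\phi_h^n}_h^2+\norm{\phi_h^{n-1}}_h^2+\norm{\sigma_h^n}_h^2+\norm{\nabla\phi_h^n}_{L^2}^2\big)$ and either suffices. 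The remaining steps (linear growth of the sources, trace and Young for the Robin terms, absorption of the current-index terms, Gronwall) coincide with \eqref{eq:energy_FE_3}--\eqref{eq:energy_FE_14}.
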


\begin{proof} 
We now start the testing procedure.
In equation \eqref{eq:phi_FE}, we set $\zeta_h = \mu_h^n$ and use the lower bound of $m(\cdot)$ to obtain
\begin{align*}
    \int_\Omega \calI_h \Big[ \Big(\frac{\phi_h^n-\phi_h^{n-1}}{\Delta t}
    - \Gamma_{\phi,h}^n \Big) \mu_h^n \Big]
    + m_0 \aabs{\nabla\mu_h^n}^2 \dx
    \leq 0.
\end{align*}

Testing \eqref{eq:mu_FE} with $\rho_h = \frac{1}{\Delta t}(\phi_h^n-\phi_h^{n-1})$ gives
\begin{align*}
    \int_\Omega \calI_h \Big[ \frac{\phi_h^n-\phi_h^{n-1}}{\Delta t} \Big( - \mu_h^n  
    + A \psi_1'(\phi_h^n) + A \psi_2'(\phi_h^{n-1}) 
    - \chi_\phi \sigma_h^n \Big) \Big] 
    + \frac{B}{\Delta t} \nabla\phi_h^n \cdot \big( \nabla\phi_h^n - \nabla\phi_h^{n-1}\big) \dx
    = 0.
\end{align*}
As the potential can be decomposed into a convex and a concave part, i.e.~$\psi=\psi_1 + \psi_2$, we get the inequality
\begin{align}
\label{eq:convex_concave}
    \Big(\psi_1'(\phi_h^n) + \psi_2'(\phi_h^{n-1}) \Big) (\phi_h^n- \phi_h^{n-1}) \geq \psi(\phi_h^n) - \psi(\phi_h^{n-1}).
\end{align}
Using the elementary identity 
\begin{align}
    \label{eq:elementary_identity}
    2x(x-y) = x^2-y^2 + (x-y)^2 \quad\quad \forall x,y\in\R,
\end{align}
we obtain that
\begin{align*}
    \int_\Omega \calI_h \Big[
    A \frac{\psi(\phi_h^n)-\psi(\phi_h^{n-1})}{\Delta t}
    - \frac{\phi_h^n-\phi_h^{n-1}}{\Delta t} (\mu_h^n + \chi_\phi \sigma_h^n) \Big] \dx
    \quad\quad
    \\
    +\int_\Omega \frac{B}{2} \frac{\abs{\nabla\phi_h^n}^2 - \abs{\nabla\phi_h^{n-1}}^2}{\Delta t} 
    + \frac{B}{2} \frac{\abs{\nabla\phi_h^n - \nabla\phi_h^{n-1}}^2}{\Delta t}  \dx
    \leq 0.
\end{align*}

Next, we test \eqref{eq:sigma_FE} with $\xi_h = \chi_\sigma \sigma_h^n + \chi_\phi (1-\phi_h^n)$ and use the lower bound of $n(\cdot)$. Then it holds on noting \eqref{eq:elementary_identity} that
\begin{align*}
    &\int_\Omega \calI_h \Big[ 
    \frac{\chi_\sigma}{2} \frac{\abs{\sigma_h^n}^2 - \abs{\sigma_h^{n-1}}^2}{\Delta t}
    + \frac{\chi_\sigma}{2} \frac{\abs{\sigma_h^n - \sigma_h^{n-1}}^2}{\Delta t}
    \Big] 
    + n_0  \aabs{\chi_\sigma \nabla \sigma_h^n - \chi_\phi \nabla\phi_h^n}^2  \dx
    \\
    &\quad 
    + \int_\Omega \calI_h \Big[ \Gamma_{\sigma,h}^n \big(\chi_\sigma \sigma_h^n + \chi_\phi (1-\phi_h^n)\big) 
    + \chi_\phi (1-\phi_h^n) \Big(\frac{\sigma_h^n - \sigma_h^{n-1}}{\Delta t}\Big)   \Big]\dx
    \\
    &\quad 
    + \int_{\partial\Omega } \calI_h\Big[ K \chi_\sigma \abs{\sigma_h^n}^2 
    + K \chi_\phi \sigma_h^n (1-\phi_h^n)
    - K \sigma_{\infty,h}^n \big(\chi_\sigma \sigma_h^n + \chi_\phi (1-\phi_h^n)\big)  \Big] \dH^{d-1}
    \\
    &\leq 0.
\end{align*}

So far we have that
\begin{align}
\begin{split}
\label{eq:energy_FE_1}
    &  \frac{B}{2\Delta t} \Big( \norm{\nabla\phi_h^n}_{L^2}^2 - \norm{\nabla\phi_h^{n-1}}_{L^2}^2  
    +  \norm{\nabla\phi_h^n - \nabla\phi_h^{n-1}}_{L^2}^2 \Big) 
    + \int_\Omega \frac{A}{\Delta t} \calI_h\Big[  \psi(\phi_h^n)-\psi(\phi_h^{n-1}) \Big]\dx
    \\
    &\quad 
    + \frac{\chi_\sigma}{2\Delta t}  \Big( 
    \norm{\sigma_h^n}_h^2 - \norm{\sigma_h^{n-1}}_h^2
    + \norm{\sigma_h^n - \sigma_h^{n-1}}_h^2
    \Big)
    \\
    &\quad 
    + 
    m_0 \norm{\nabla\mu_h^n}_{L^2}^2 
    +  n_0  \nnorm{\chi_\sigma \nabla \sigma_h^n - \chi_\phi \nabla\phi_h^n}_{L^2}^2 
    + K \chi_\sigma \norm{\sigma_h^n}_{h,{\partial\Omega }}^2
    \\
    &\quad
    + \int_\Omega \calI_h \Big[ 
    \Gamma_{\sigma,h}^n \big(\chi_\sigma \sigma_h^n + \chi_\phi (1-\phi_h^n)\big) 
    - \mu_h^n \Gamma_{\phi,h}^n  
    \Big] \dx
    \\
    &\quad 
    + \int_\Omega \calI_h \Big[ 
    \chi_\phi (1-\phi_h^n) \Big(\frac{\sigma_h^n - \sigma_h^{n-1}}{\Delta t}\Big)  
    - \frac{\phi_h^n-\phi_h^{n-1}}{\Delta t} \chi_\phi \sigma_h^n \Big] \dx
    \\
    &\quad 
    + \int_{\partial\Omega } \calI_h\Big[
    K \chi_\phi \sigma_h^n (1-\phi_h^n)
    - K \sigma_{\infty,h}^n \big(\chi_\sigma \sigma_h^n + \chi_\phi (1-\phi_h^n)\big)  \Big] \dH^{d-1}
    \\
    &\leq 0.
\end{split}
\end{align}

Next, we derive an estimate for the chemical potential $\mu_h^n$.
On noting \eqref{A4_2} and Young's inequality, we receive by testing \eqref{eq:mu_FE} with $\rho_h = \mu_h^n$ that
\begin{align*}
    &\int_\Omega \calI_h \big[ \abs{\mu_h^n}^2 \big] \dx 
    =
    \int_\Omega \calI_h \Big[
    \Big(A\psi_1'(\phi_h^n) + A\psi_2'(\phi_h^{n-1}) 
    - \chi_\phi \sigma_h^n\Big) \mu_h^n \Big] 
    + B \nabla\phi_h^n \cdot \nabla\mu_h^n \dx
    \\
    &\leq \int_\Omega \calI_h \Big[
    A R_3 (2 + \abs{\phi_h^{n-1}} 
    + \abs{\phi_h^n} \big) \abs{\mu_h^n} 
    + \chi_\phi \abs{\sigma_h^n} \abs{\mu_h^n} \Big]
    + B \abs{\nabla\phi_h^n} \abs{\nabla\mu_h^n} \dx
    \\
    &\leq \int_\Omega \calI_h \Big[
    \frac{1}{2} \abs{\mu_h^n}^2 
    + 2 A^2 R_3^2 \abs{\phi_h^n}^2  
    + 2 A^2 R_3^2 \abs{\phi_h^{n-1}}^2 
    + 2 \chi_\phi^2 \abs{\sigma_h^n}^2 \Big] \dx
    \\
    &\quad + \int_\Omega \frac{B^2}{m_0} \abs{\nabla\phi_h^n}^2 
    + \frac{m_0}{4} \abs{\nabla\mu_h^n}^2 \dx 
    + C(A, R_3, \Omega),
\end{align*}
which yields
\begin{align}
\begin{split}
    \label{eq:energy_FE_2}
    \norm{\mu_h^n}_h^2 
    &\leq 
    4 A^2 R_3^2 \Big(\norm{\phi_h^n}_h^2 
    +  \norm{\phi_h^{n-1}}_h^2 \Big)
    + 4 \chi_\phi^2 \norm{\sigma_h^n}_h^2
    + \frac{2B^2}{m_0} \norm{\nabla\phi_h^n}_{L^2}^2 
    \\
    &\quad
    + \frac{m_0}{2} \norm{\nabla\mu_h^n}_{L^2}^2 \dx 
    + C(A,R_3,\Omega).
\end{split}
\end{align}

With Hölder's and Young's inequalities and \eqref{eq:energy_FE_2}, we can estimate the source terms in \eqref{eq:energy_FE_1} as follows:
\begin{align}
\begin{split}
\label{eq:energy_FE_3}
    &\int_\Omega \calI_h \Big[ 
    \Gamma_{\sigma,h}^n \big(\chi_\sigma \sigma_h^n + \chi_\phi (1-\phi_h^n)\big) 
    - \mu_h^n \Gamma_{\phi,h}^n  
    \Big] \dx
    \\
    &\leq
    \frac{1}{2} \norm{\mu_h}_h^2 
    + \frac{1}{2} \norm{\Gamma_{\phi,h}^n}_h^2
    + \frac{1}{2} \norm{\Gamma_{\sigma,h}^n}_h^2
    + \frac{3\chi_\sigma^2}{2} \norm{\sigma_h^n}_h^2
    + \frac{3\chi_\phi^2}{2} \norm{\phi_h^n}_h^2
    + \frac{3\chi_\phi^2}{2} \abs{\Omega}
    \\
    &\leq 
    \frac{1}{2} \norm{\mu_h}_h^2 
    + R_0^2 \nnorm{(1 + \abs{\phi_h^n} + \abs{\sigma_h^n}) }_h^2
    + \frac{3\chi_\sigma^2}{2} \norm{\sigma_h^n}_h^2
    + \frac{3\chi_\phi^2}{2} \norm{\phi_h^n}_h^2
    + \frac{3\chi_\phi^2}{2} \abs{\Omega}
    \\
    &\leq 
    \frac{1}{2} \norm{\mu_h}_h^2 
    + \big(3R_0^2+ \frac{3\chi_\sigma^2}{2}\big) \norm{\sigma_h^n}_h^2 
    + \big(3R_0^2+ \frac{3\chi_\phi^2}{2}\big)  \norm{\phi_h^n}_h^2
    + C(R_0, \chi_\phi, \Omega).
\end{split}
\end{align}

For the terms in \eqref{eq:energy_FE_1} involving the boundary integrals, we have by Hölder's and Young's inequalities, \eqref{eq:norm_equiv}, \eqref{eq:norm_equiv_Gamma} and the trace theorem, that
\begin{align}
\begin{split}
\label{eq:energy_FE_4}
    &\int_{\partial\Omega } \calI_h\Big[
    K \chi_\phi \sigma_h^n (1-\phi_h^n)
    - K \sigma_{\infty,h}^n \big(\chi_\sigma \sigma_h^n + \chi_\phi (1-\phi_h^n)\big)  \Big] \dH^{d-1}
    \\
    &\leq \int_{\partial\Omega } \calI_h \Big[ 
    \frac{3 K\chi_\sigma}{4}  \abs{\sigma_h^n}^2  
    + K\big(\frac{\chi_\phi^2}{2\chi_\sigma} + 1 \big) \abs{\phi_h^n}^2
    + C(K, \chi_\phi, \chi_\sigma) \big(1 
    + \abs{\sigma_{\infty,h}^n}^2 \big) 
    \Big] \dH^{d-1}
    \\
    &\leq 
    K C_{tr}^2 \big(\frac{\chi_\phi^2}{2\chi_\sigma} + 1 \big)  
    \Big( \norm{\phi_h^n}_h^2
    + \norm{\nabla\phi_h^n}_{L^2}^2 \Big)
    + \frac{3 \chi_\sigma K}{4}  \norm{\sigma_h^n}_{h,{\partial\Omega }}^2  
    + C(K, \chi_\phi, \chi_\sigma) \big( \abs{{\partial\Omega }}
    + \norm{\sigma_{\infty,h}^n}_{h,{\partial\Omega }}^2 \big) .
\end{split}
\end{align}

Furthermore, we can calculate
\begin{subequations}
\label{eq:energy_FE_5}
\begin{align}
\begin{split}
    & \int_\Omega \calI_h \Big[ \chi_\phi (1-\phi_h^n) \big(\sigma_h^n - \sigma_h^{n-1}\big)  
    -  \chi_\phi \big(\phi_h^n-\phi_h^{n-1}\big) \sigma_h^n \Big] \dx
    \\
    &= 
    \int_\Omega \calI_h \Big[ \chi_\phi  \sigma_h^n (1-\phi_h^n) 
    - \chi_\phi \sigma_h^{n-1} (1-\phi_h^{n-1})
    -  \chi_\phi (\phi_h^n-\phi_h^{n-1}) (\sigma_h^n- \sigma_h^{n-1}) \Big] \dx,
\end{split}
\end{align}
and
\begin{align}
\begin{split}
    & \aaabs{\int_\Omega \calI_h \Big[ \chi_\phi (\phi_h^n-\phi_h^{n-1}) (\sigma_h^n- \sigma_h^{n-1}) \Big] \dx }
    \leq \frac{\chi_\phi^2}{\chi_\sigma} \norm{\phi_h^n - \phi_h^{n-1}}_h^2
    + \frac{\chi_\sigma}{4} \norm{\sigma_h^n - \sigma_h^{n-1}}_h^2
    \\
    &\leq \frac{2\chi_\phi^2}{\chi_\sigma} \Big( \norm{\phi_h^n}_h^2 + \norm{\phi_h^{n-1}}_h^2 \Big)
    + \frac{\chi_\sigma}{4} \norm{\sigma_h^n - \sigma_h^{n-1}}_h^2.
\end{split}
\end{align}
\end{subequations}

Combining \eqref{eq:energy_FE_1}--\eqref{eq:energy_FE_5}, we obtain on noting \eqref{eq:def_energy_FE} that
\begin{align}
\begin{split}
\label{eq:energy_FE_6}
    & \frac{1}{\Delta t} \Big( 
    \calF_h(\phi_h^n, \sigma_h^n) - \calF_h(\phi_h^{n-1}, \sigma_h^{n-1}) \Big)
    + \frac{B}{2\Delta t} \norm{\nabla\phi_h^n - \nabla\phi_h^{n-1}}_{L^2}^2 
    + \frac{\chi_\sigma}{4\Delta t} 
    \norm{\sigma_h^n - \sigma_h^{n-1}}_h^2
    \\
    &\quad 
    + \frac{1}{2} \norm{\mu_h^n}_h^2 
    + \frac{m_0}{2} \norm{\nabla\mu_h^n}_{L^2}^2 
    +  n_0  \nnorm{\chi_\sigma \nabla \sigma_h^n - \chi_\phi \nabla\phi_h^n}_{L^2}^2 \dx
    + \frac{K \chi_\sigma}{4} \norm{\sigma_h^n}_{h,{\partial\Omega }}^2
    \\
    &\leq 
    C(A, R_0, R_3,K, \chi_\phi, \chi_\sigma, \Omega, {\partial\Omega }) \big( 1
    + \norm{\sigma_{\infty,h}^n}_{h,{\partial\Omega }}^2 \big)
    +  \frac{2\chi_\phi^2}{\chi_\sigma \Delta t} \Big( \norm{\phi_h^n}_h^2 + \norm{\phi_h^{n-1}}_h^2 \Big)
    \\
    &\quad 
    + \Big(3R_0^2+ \frac{3\chi_\sigma^2}{2} + 4 \chi_\phi^2 \Big) \norm{\sigma_h^n}_h^2 
    + \Big( \frac{2B^2}{m_0} + K C_{tr}^2 \big(\frac{\chi_\phi^2}{2\chi_\sigma} + 1 \big) \Big)
    \norm{\nabla\phi_h^n}_{L^2}^2 
    \\
    &\quad
    + 4 A^2 R_3^2 \norm{\phi_h^{n-1}}_h^2
    + \Big( 4 A^2 R_3^2
    + 3R_0^2+ \frac{3\chi_\phi^2}{2}
    + K C_{tr}^2 \big(\frac{\chi_\phi^2}{2\chi_\sigma} + 1 \big) \Big)
    \norm{\phi_h^n}_h^2.
\end{split}
\end{align}

Next, applying the triangle inequality and Young's inequality, we obtain
\begin{align*}
    \chi_\sigma^2 \norm{\nabla \sigma_h^n}_{L^2}^2
    &\leq
    \Big( \nnorm{\chi_\sigma \nabla \sigma_h^n - \chi_\phi \nabla\phi_h^n}_{L^2} 
    + \chi_\phi \norm{\nabla \phi_h^n}_{L^2} \Big)^2
    \\
    &\leq 2 \nnorm{\chi_\sigma \nabla \sigma_h^n - \chi_\phi \nabla\phi_h^n}_{L^2}^2
    + 2 \chi_\phi^2 \norm{\nabla \phi_h^n}_{L^2}^2,
\end{align*}
so that \eqref{eq:energy_FE_6} becomes
\begin{align}
\begin{split}
\label{eq:energy_FE_7}
    & \frac{1}{\Delta t} \Big( 
    \calF_h(\phi_h^n, \sigma_h^n) - \calF_h(\phi_h^{n-1}, \sigma_h^{n-1}) \Big)
    + \frac{B}{2\Delta t} \norm{\nabla\phi_h^n - \nabla\phi_h^{n-1}}_{L^2}^2 
    + \frac{\chi_\sigma}{4\Delta t} 
    \norm{\sigma_h^n - \sigma_h^{n-1}}_h^2
    \\
    &\quad 
    + \frac{1}{2} \norm{\mu_h^n}_h^2 
    + \frac{m_0}{2} \norm{\nabla\mu_h^n}_{L^2}^2 
    +  \frac{n_0 \chi_\sigma^2}{2}  \nnorm{\nabla \sigma_h^n}_{L^2}^2
    + \frac{K \chi_\sigma}{4} \norm{\sigma_h^n}_{h,{\partial\Omega }}^2
    \\
    &\leq 
    C(A, R_0, R_3,K, \chi_\phi, \chi_\sigma, \Omega, {\partial\Omega }) \big( 1
    + \norm{\sigma_{\infty,h}^n}_{h,{\partial\Omega }}^2 \big)
    +  \frac{2\chi_\phi^2}{\chi_\sigma \Delta t} \Big( \norm{\phi_h^n}_h^2 + \norm{\phi_h^{n-1}}_h^2 \Big)
    \\
    &\quad 
    + \Big(3R_0^2+ \frac{3\chi_\sigma^2}{2} + 4 \chi_\phi^2 \Big) \norm{\sigma_h^n}_h^2 
    + \Big( \frac{2B^2}{m_0} + K C_{tr}^2 \big(\frac{\chi_\phi^2}{2\chi_\sigma} + 1 \big) 
    + n_0 \chi_\phi^2 \Big)
    \norm{\nabla\phi_h^n}_{L^2}^2 
    \\
    &\quad
    + 4 A^2 R_3^2 \norm{\phi_h^{n-1}}_h^2
    + \Big( 4 A^2 R_3^2
    + 3R_0^2+ \frac{3\chi_\phi^2}{2}
    + K C_{tr}^2 \big(\frac{\chi_\phi^2}{2\chi_\sigma} + 1 \big) \Big)
    \norm{\phi_h^n}_h^2.
\end{split}
\end{align}

Now we define the constants
\begin{align}
\begin{split}
\label{eq:energy_FE_const}
    &c_1 \coloneqq \frac{m_0}{2}, \quad
    c_2 \coloneqq \frac{n_0 \chi_\sigma^2}{2}, \quad
    c_3 \coloneqq \frac{K \chi_\sigma}{4}, \quad
    c_4 \coloneqq 3R_0^2+ \frac{3\chi_\sigma^2}{2} + 4 \chi_\phi^2, 
    \\
    &c_5 \coloneqq \frac{2B^2}{m_0} + K C_{tr}^2 \big(\frac{\chi_\phi^2}{2\chi_\sigma} + 1 \big) 
    + n_0 \chi_\phi^2, \quad
    c_6 \coloneqq 4 A^2 R_3^2, 
    \\
    & c_7 \coloneqq 4 A^2 R_3^2
    + 3R_0^2+ \frac{3\chi_\phi^2}{2}
    + K C_{tr}^2 \big(\frac{\chi_\phi^2}{2\chi_\sigma} + 1 \big).
\end{split}
\end{align}
Then, \eqref{eq:energy_FE_7} becomes
\begin{align}
\begin{split}
\label{eq:energy_FE_8}
    & \frac{1}{\Delta t} \Big( 
    \calF_h(\phi_h^n, \sigma_h^n) - \calF_h(\phi_h^{n-1}, \sigma_h^{n-1}) \Big)
    + \frac{B}{2\Delta t} \norm{\nabla\phi_h^n - \nabla\phi_h^{n-1}}_{L^2}^2 
    + \frac{\chi_\sigma}{4\Delta t} 
    \norm{\sigma_h^n - \sigma_h^{n-1}}_h^2
    \\
    &\quad 
    + \frac{1}{2} \norm{\mu_h^n}_h^2 
    + c_1 \norm{\nabla\mu_h^n}_{L^2}^2 
    +  c_2  \nnorm{\nabla \sigma_h^n}_{L^2}^2
    + c_3 \norm{\sigma_h^n}_{h,{\partial\Omega }}^2
    \\
    &\leq 
    C(A, R_0, R_3,K, \chi_\phi, \chi_\sigma, \Omega, {\partial\Omega }) \big( 1
    + \norm{\sigma_{\infty,h}^n}_{h,{\partial\Omega }}^2 \big)
    + \frac{2\chi_\phi^2}{\chi_\sigma \Delta t} \Big( \norm{\phi_h^n}_h^2 + \norm{\phi_h^{n-1}}_h^2 \Big)
    \\
    &\quad 
    + c_4 \norm{\sigma_h^n}_h^2 
    + c_5  \norm{\nabla\phi_h^n}_{L^2}^2 
    + c_6 \norm{\phi_h^{n-1}}_h^2
    + c_7  \norm{\phi_h^n}_h^2.
\end{split}
\end{align}

Multiplying both sides of \eqref{eq:energy_FE_8} with $\Delta t$ and summing from $n=1,...,m$, where $m=1,...,N_T$, yields
\begin{align}
\begin{split}
\label{eq:energy_FE_9}
    & \calF_h(\phi_h^m, \sigma_h^m) 
    + \frac{B}{2}  \sum_{n=1}^m \norm{\nabla\phi_h^n - \nabla\phi_h^{n-1}}_{L^2}^2 
    + \frac{\chi_\sigma}{4}  \sum_{n=1}^m \norm{\sigma_h^n - \sigma_h^{n-1}}_h^2
    \\
    &\quad 
    + \Delta t \sum_{n=1}^m \Big( \frac{1}{2} \norm{\mu_h^n}_h^2 
    + c_1 \norm{\nabla\mu_h^n}_{L^2}^2 
    +  c_2  \norm{\nabla \sigma_h^n}_{L^2}^2
    + c_3 \norm{\sigma_h^n}_{h,{\partial\Omega }}^2 \Big)
   \\
    &\leq 
    \abs{\calF_h(\phi_h^0, \sigma_h^0)} + C \Big( T
    + \Delta t \sum_{n=1}^{N_T} \norm{\sigma_{\infty,h}^n}_{h,{\partial\Omega }}^2 \Big)
    + \sum_{n=1}^m \frac{2\chi_\phi^2}{\chi_\sigma } \Big( \norm{\phi_h^n}_h^2 + \norm{\phi_h^{n-1}}_h^2 \Big)
    \\
    &\quad 
    + \Delta t \sum_{n=1}^m \Big( c_4 \norm{\sigma_h^n}_h^2 
    + c_5  \norm{\nabla\phi_h^n}_{L^2}^2 
    + c_6 \norm{\phi_h^{n-1}}_h^2
    + c_7  \norm{\phi_h^n}_h^2 \Big).
\end{split}
\end{align}
By Hölder's and Young's inequalities, we have
\begin{align}
\label{eq:energy_FE_10}
    &\int_\Omega \calI_h \Big[ \sigma_h^m (1-\phi_h^m) \Big] \dx
    \leq \frac{\chi_\sigma}{4} \norm{\sigma_h^m}_h^2
    + \frac{2 \chi_\phi^2}{\chi_\sigma } \norm{\phi_h^m}_h^2
    + C(\chi_\sigma, \chi_\phi, \Omega).
\end{align}
Moreover, we obtain from \eqref{A4_1} that
\begin{align}
\label{eq:energy_FE_11}
    & R_1 \norm{\phi_h^m}_h^2
    \leq  \int_\Omega \calI_h \big[ \psi(\phi_h^m) \big] \dx 
    + R_2 \abs{\Omega}.
\end{align}
Hence, we can deduce from \eqref{eq:energy_FE_9}--\eqref{eq:energy_FE_11} that
\begin{align}
\begin{split}
\label{eq:energy_FE_12}
    & \Big( A R_1 - \frac{2 \chi_\phi^2}{\chi_\sigma}\Big) \norm{\phi_h^m}_h^2
    + \frac{B}{2}  \norm{\nabla\phi_h^m}_{L^2}^2
    + \frac{\chi_\sigma}{4} \norm{\sigma_h^m}_h^2
    \\
    &\quad
    + \frac{B}{2}  \sum_{n=1}^m \norm{\nabla\phi_h^n - \nabla\phi_h^{n-1}}_{L^2}^2 
    + \frac{\chi_\sigma}{4}  \sum_{n=1}^m \norm{\sigma_h^n - \sigma_h^{n-1}}_h^2
    \\
    &\quad 
    + \Delta t \sum_{n=1}^m \Big( \frac{1}{2} \norm{\mu_h^n}_h^2 
    + c_1 \norm{\nabla\mu_h^n}_{L^2}^2 
    +  c_2  \norm{\nabla \sigma_h^n}_{L^2}^2
    + c_3 \norm{\sigma_h^n}_{h,{\partial\Omega }}^2 \Big)
   \\
    &\leq 
    \abs{\calF_h(\phi_h^0, \sigma_h^0)} + C \Big( T
    + \Delta t \sum_{n=1}^{N_T} \norm{\sigma_{\infty,h}^n}_{h,{\partial\Omega }}^2 \Big)
    + \sum_{n=1}^m \frac{2\chi_\phi^2}{\chi_\sigma } \Big( \norm{\phi_h^n}_h^2 + \norm{\phi_h^{n-1}}_h^2 \Big)
    \\
    &\quad 
    + \Delta t \sum_{n=1}^m \Big( c_4 \norm{\sigma_h^n}_h^2 
    + c_5  \norm{\nabla\phi_h^n}_{L^2}^2 
    + c_6 \norm{\phi_h^{n-1}}_h^2
    + c_7  \norm{\phi_h^n}_h^2 \Big).
\end{split}
\end{align}

To apply a discrete Gronwall argument, i.e.~Lemma \ref{lemma:gronwall_discrete}, we have to absorb all terms on the right-hand side with index $n=m$. Hence, we obtain from \eqref{eq:energy_FE_12} that
\begin{align}
\begin{split}
\label{eq:energy_FE_13}
    & \Big( A R_1 - \frac{4 \chi_\phi^2}{\chi_\sigma} - c_7 \Delta t \Big) \norm{\phi_h^m}_h^2
    + \Big( \frac{B}{2}  - c_5 \Delta t \Big)  \norm{\nabla\phi_h^m}_{L^2}^2
    + \Big( \frac{\chi_\sigma}{4} - c_4 \Delta t \Big) \norm{\sigma_h^m}_h^2
    \\
    &\quad
    + \frac{B}{2}  \sum_{n=1}^m \norm{\nabla\phi_h^n - \nabla\phi_h^{n-1}}_{L^2}^2 
    + \frac{\chi_\sigma}{4}  \sum_{n=1}^m \norm{\sigma_h^n - \sigma_h^{n-1}}_h^2
    \\
    &\quad 
    + \Delta t \sum_{n=1}^m \Big( \frac{1}{2} \norm{\mu_h^n}_h^2 
    + c_1 \norm{\nabla\mu_h^n}_{L^2}^2 
    +  c_2  \norm{\nabla \sigma_h^n}_{L^2}^2
    + c_3 \norm{\sigma_h^n}_{h,{\partial\Omega }}^2 \Big)
   \\
    &\leq 
    \abs{\calF_h(\phi_h^0, \sigma_h^0)} + C \Big( T
    + \Delta t \sum_{n=1}^{N_T} \norm{\sigma_{\infty,h}^n}_{h,{\partial\Omega }}^2 \Big)
    + \sum_{n=0}^{m-1} \Big( \frac{4\chi_\phi^2}{\chi_\sigma } + (c_6 + c_7) \Delta t \Big) \norm{\phi_h^n}_h^2 
    \\
    &\quad 
    + \Delta t \sum_{n=1}^{m-1} \Big( c_4 \norm{\sigma_h^n}_h^2 
    + c_5  \norm{\nabla\phi_h^n}_{L^2}^2  \Big).
\end{split}
\end{align}

We need to make sure that all coefficients on the left-hand side are positive which can be achieved with the following restriction to the time step size $\Delta t$:
\begin{align}
\label{eq:energy_FE_dt}
    \Delta t &< \Delta t_* \coloneqq 
    \min \bigg\{ \frac{B}{2 c_5}, \
    \frac{\chi_\sigma}{4 c_4}, \
    \frac{A R_1 - \frac{4 \chi_\phi^2}{\chi_\sigma}}{c_7}
    \bigg\},
\end{align}
where the constants $c_4, c_5, c_7>0$ are defined by \eqref{eq:energy_FE_const}. We remark that $A R_1 - \frac{4 \chi_\phi^2}{\chi_\sigma}>0$ by assumption \eqref{A4_3}. 
Hence, we obtain from Lemma \ref{lemma:gronwall_discrete}, \eqref{eq:bounds_initial} and \eqref{eq:bounds_bc} that
\begin{align}
\begin{split}
\label{eq:energy_FE_14}
    &  \norm{\phi_h^m}_h^2
    +  \norm{\nabla\phi_h^m}_{L^2}^2
    +  \norm{\sigma_h^m}_h^2
    +  \sum_{n=1}^m \norm{\nabla\phi_h^n - \nabla\phi_h^{n-1}}_{L^2}^2 
    +  \sum_{n=1}^m \norm{\sigma_h^n - \sigma_h^{n-1}}_h^2
    \\
    &\quad
    + \Delta t \sum_{n=1}^m \Big( \norm{\mu_h^n}_h^2 
    + \norm{\nabla\mu_h^n}_{L^2}^2 
    + \norm{\nabla \sigma_h^n}_{L^2}^2
    + \norm{\sigma_h^n}_{h,{\partial\Omega }}^2 \Big)
   \\
    &\leq 
    C \Big( T + \abs{\calF_h(\phi_h^0, \sigma_h^0)}
    + \Delta t \sum_{n=1}^{N_T} \norm{\sigma_{\infty,h}^n}_{h,{\partial\Omega }}^2 \Big) \cdot \exp( CT )
    \leq C,
\end{split}
\end{align}
for some constants $C>0$ that are independent of $h$ and $\Delta t$.
Taking the maximum over $m=1,...,N_T$ on the left-hand side yields the desired result.
\end{proof}

\begin{remark}
We remark that the assumptions \eqref{A4_3} and \eqref{eq:energy_FE_dt} are by no means optimal assumptions. 
The assumption \eqref{A4_3} is necessary because the term
\begin{align*}
    \int_\Omega \calI_h\big[\sigma_h^m (1-\phi_h^m) \big] \dx
\end{align*}
in the discrete energy $\calF_h(\phi_h^m,\sigma_h^m)$ on the left-hand side of \eqref{eq:energy_FE_9} can be negative. Hence, we perform the steps \eqref{eq:energy_FE_10}--\eqref{eq:energy_FE_12} to absorb it. For more details, also see \cite[Remark 3.1]{garcke_lam_2017}. The condition \eqref{eq:energy_FE_dt} appears when applying a discrete Gronwall argument in \eqref{eq:energy_FE_13}.
\end{remark}

\section{Existence and continuous dependence of discrete solutions}

We recall the following lemma from \cite[Chap. 9.1]{evans_2010} which is a direct consequence of Brouwer's fixed point theorem.
\begin{lemma}[Zeros of a vector field]
\label{lemma:zeros_vector_field}
For $n\in\N$, assume that the continuous function $\pmb v\colon \R^n\to\R^n$ satisfies
\begin{align*}
    \pmb v(\pmb x)\cdot \pmb x \geq 0 \ \ \textit{ if } \abs{\pmb x}=R,
\end{align*}
for some $R>0$. Then there exists a point $\pmb x\in B_R(0)$ such that $\pmb v(\pmb x)=0$.
\end{lemma}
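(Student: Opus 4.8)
The plan is to argue by contradiction and reduce the claim to Brouwer's fixed point theorem on the closed ball $\overline{B_R(0)}$, which is compact and convex. So suppose, contrary to the assertion, that $\pmb v(\pmb x)\neq 0$ for every $\pmb x\in\overline{B_R(0)}$. Then $\abs{\pmb v(\pmb x)}>0$ on all of $\overline{B_R(0)}$, and hence the rescaled map
\begin{align*}
    \pmb w(\pmb x) \coloneqq -R\,\frac{\pmb v(\pmb x)}{\abs{\pmb v(\pmb x)}}
\end{align*}
is well defined and continuous on $\overline{B_R(0)}$. Since $\abs{\pmb w(\pmb x)}=R$ for all $\pmb x$, the map sends $\overline{B_R(0)}$ into the sphere $\{\abs{\pmb x}=R\}\subseteq\overline{B_R(0)}$, so in particular $\pmb w\colon\overline{B_R(0)}\to\overline{B_R(0)}$.

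Next I would invoke Brouwer's fixed point theorem to obtain a point $\pmb x_0\in\overline{B_R(0)}$ with $\pmb w(\pmb x_0)=\pmb x_0$. Because the image of $\pmb w$ lies on the sphere, this forces $\abs{\pmb x_0}=\abs{\pmb w(\pmb x_0)}=R$, so the fixed point lands precisely where the sign hypothesis on $\pmb v$ is available. Taking the inner product of the fixed-point identity with $\pmb x_0$ then yields
\begin{align*}
    R^2 = \abs{\pmb x_0}^2 = \pmb x_0\cdot\pmb w(\pmb x_0) = -R\,\frac{\pmb x_0\cdot\pmb v(\pmb x_0)}{\abs{\pmb v(\pmb x_0)}}.
\end{align*}
Since $\abs{\pmb x_0}=R$, the hypothesis gives $\pmb x_0\cdot\pmb v(\pmb x_0)\geq 0$, so the right-hand side is $\leq 0$, while the left-hand side equals $R^2>0$; this contradiction shows that $\pmb v$ must vanish at some $\pmb x\in\overline{B_R(0)}$, as claimed.

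There is no genuine analytic difficulty here, and I expect the only point demanding care to be the logical bookkeeping. The normalized field $\pmb w$ is continuous only under the non-vanishing assumption, so Brouwer's theorem can be applied exclusively inside the proof by contradiction; and one must verify that the resulting fixed point actually lies on the boundary sphere $\{\abs{\pmb x}=R\}$ rather than in the interior, since the coercivity-type condition $\pmb v(\pmb x)\cdot\pmb x\geq 0$ is assumed only there. Both facts follow immediately from $\abs{\pmb w}\equiv R$, which is why the normalization by the constant radius $R$ (rather than by any smaller factor) is the natural choice.
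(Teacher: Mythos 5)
Your proof is correct and is exactly the standard argument from the source the paper cites (Evans, Chap.~9.1): the paper itself gives no proof but refers to this result as a direct consequence of Brouwer's fixed point theorem, and your contradiction argument with the normalized map $\pmb w(\pmb x) = -R\,\pmb v(\pmb x)/\abs{\pmb v(\pmb x)}$ is precisely that consequence. No issues.
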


Now we can establish the following existence result.

\begin{theorem}[Existence]
\label{theorem:existence_FE}
Let $\phi_h^0,\sigma_h^0 \in \calS_h$ and for $n=1,...,N_T$, let $\sigma_{\infty,h}^n\in \calS_h$ be given by \eqref{eq:def_initial} and \eqref{eq:def_bc}, respectively. Furthermore, assume that $\Delta t < \Delta t_*$, where $\Delta t_*$ is given by \eqref{eq:energy_FE_dt}. Then, for all $n=1,...,N_T$, there exists a solution triplet $\big( \phi_h^n,\mu_h^n,\sigma_h^n \big)\in (\calS_h)^3$ of \eqref{eq:phi_FE}--\eqref{eq:sigma_FE} which fulfills \eqref{eq:energy_FE}.
\end{theorem}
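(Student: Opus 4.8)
The plan is to prove existence of a discrete solution for each fixed time step $n$ by induction, assuming the previous iterate $(\phi_h^{n-1},\sigma_h^{n-1})$ is known, and to construct the solution at time level $n$ via Lemma \ref{lemma:zeros_vector_field} (Brouwer's fixed point theorem applied to a vector field). The bound \eqref{eq:energy_FE} then follows directly from Lemma \ref{lemma:energy_FE}, since that lemma already guarantees the stability estimate for \emph{any} solution that exists, provided $\Delta t<\Delta t_*$. So the entire content of the proof is the existence part.

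The first step is to reduce the three-field system to a fixed-point problem on a finite-dimensional space. Since $\calS_h$ is finite-dimensional, I identify $(\calS_h)^3$ with $\R^{3N_p}$ by expanding each of $\phi_h^n,\mu_h^n,\sigma_h^n$ in the nodal basis. The equations \eqref{eq:phi_FE}--\eqref{eq:sigma_FE}, tested against all basis functions, define a continuous map $F\colon \R^{3N_p}\to\R^{3N_p}$ whose zeros are exactly the discrete solutions; continuity of $F$ follows from continuity of $\Gamma_\phi,\Gamma_\sigma$ (assumption $(A2)$), of $m,n$ (assumption $(A3)$) and of $\psi_1',\psi_2'$ (assumption $(A4)$). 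To apply Lemma \ref{lemma:zeros_vector_field} I must exhibit a coercivity condition $F(\pmbx)\cdot\pmbx\ge 0$ for $\abs{\pmbx}=R$ with $R$ large. The natural choice of pairing is to test \eqref{eq:phi_FE} with $\mu_h^n$, \eqref{eq:mu_FE} with $\frac{1}{\Delta t}(\phi_h^n-\phi_h^{n-1})$, and \eqref{eq:sigma_FE} with $\chi_\sigma\sigma_h^n+\chi_\phi(1-\phi_h^n)$, i.e.\ exactly the same test functions used in the proof of Lemma \ref{lemma:energy_FE}. This is essential: the inner product $F(\pmbx)\cdot\pmbx$ then reproduces the left-hand side of the energy computation, so the algebra of \eqref{eq:energy_FE_1}--\eqref{eq:energy_FE_8} can be re-used verbatim to obtain a lower bound.

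The main obstacle is that the natural pairing above is \emph{not} the Euclidean inner product on $\R^{3N_p}$; the test functions depend on the unknowns themselves and on the previous (fixed) iterate. The standard remedy, following \cite{barrett_blowey_garcke_2000}, is to define the vector field $F$ directly through these weighted pairings rather than through the raw coefficient equations, so that by construction $F(\pmbx)\cdot\pmbx$ equals the sum of the three tested equations. With this definition, the stability manipulations show that for fixed $(\phi_h^{n-1},\sigma_h^{n-1})$ one has, for $\Delta t<\Delta t_*$,
\begin{align*}
    F(\pmbx)\cdot\pmbx
    \geq
    c\Big( \norm{\phi_h^n}_h^2 + \norm{\nabla\phi_h^n}_{L^2}^2 + \norm{\sigma_h^n}_h^2 + \norm{\mu_h^n}_h^2 \Big)
    - C\big(1 + \norm{\sigma_{\infty,h}^n}_{h,\partial\Omega}^2 + \norm{\phi_h^{n-1}}_h^2 + \norm{\sigma_h^{n-1}}_h^2 \big),
\end{align*}
with $c>0$; the coercivity in all three components (crucially including $\mu_h^n$, controlled through estimate \eqref{eq:energy_FE_2}) is where the $\Delta t<\Delta t_*$ restriction and assumption \eqref{A4_3} enter. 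Since the negative constant on the right depends only on data and the previous iterate, which are fixed, the right-hand side of the displayed inequality is nonnegative once $\abs{\pmbx}^2 \simeq \norm{\phi_h^n}_h^2 + \norm{\nabla\phi_h^n}_{L^2}^2 + \norm{\sigma_h^n}_h^2 + \norm{\mu_h^n}_h^2$ is large enough, using norm equivalence \eqref{eq:norm_equiv}.

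Finally, choosing $R$ so large that $F(\pmbx)\cdot\pmbx\ge 0$ on the sphere $\abs{\pmbx}=R$, Lemma \ref{lemma:zeros_vector_field} yields a zero $\pmbx$ of $F$ in $B_R(0)$, which corresponds to a solution triplet $(\phi_h^n,\mu_h^n,\sigma_h^n)\in(\calS_h)^3$ of \eqref{eq:phi_FE}--\eqref{eq:sigma_FE} at time level $n$. Proceeding inductively over $n=1,\dots,N_T$ produces the full discrete trajectory, and Lemma \ref{lemma:energy_FE} shows it satisfies \eqref{eq:energy_FE}, completing the proof. One minor point to check is that the pairing used to define $F$ is genuinely a continuous vector field and that its zeros coincide with solutions of the original coefficient equations; this follows because the three chosen test functions, ranging over a basis, span $\calS_h$, so vanishing of all the weighted pairings is equivalent to vanishing of the residuals in \eqref{eq:phi_FE}--\eqref{eq:sigma_FE}.
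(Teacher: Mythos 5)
Your proposal follows essentially the same route as the paper: the paper also reduces each time step to a finite-dimensional zero-finding problem for a vector field defined through a change of variables, obtains coercivity by reusing the stability computation (with the $\Delta t<\Delta t_*$ restriction and \eqref{A4_3} entering exactly where you say), and invokes Lemma \ref{lemma:zeros_vector_field}. The one detail to tighten is your second test function: with $\rho_h=\frac{1}{\Delta t}(\phi_h^n-\phi_h^{n-1})$ alone the pairing only yields $\norm{\nabla\mu_h^n}_{L^2}^2$, which does not control $\norm{\mu_h^n}_h$ (no Poincar\'e inequality is available for $\mu_h^n$ here), so the source term $-\int_\Omega\calI_h[\mu_h^n\Gamma_{\phi,h}^n]\dx$ cannot be absorbed; to make the estimate \eqref{eq:energy_FE_2} you cite actually part of $F(\pmbx)\cdot\pmbx$, the second component must contain a multiple of $\mu_h^n$, which is precisely why the paper takes the (linear, invertible) change of variables $\big(\mu_h^n,\,-2\mu_h^n+\frac{1}{\Delta t}\phi_h^n,\,\chi_\sigma\sigma_h^n-\chi_\phi\phi_h^n\big)$ and verifies that the induced quantity $|||\cdot|||$ is a norm before appealing to norm equivalence.
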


\begin{proof}


Let us define a vector field $\pmb v: \R^{3N_h}\to\R^{3N_h}$ that maps the coefficient vector $\pmb x \in \R^{3 N_h}$ of 
\begin{align*}
    \Big(\mu_h^n, \ 
    -2 \mu_h^n + \frac{1}{\Delta t}\phi_h^n, \ \chi_\sigma \sigma_h^n - \chi_\phi  \phi_h^n \Big) \in (\calS_h)^3,
\end{align*}
to the left-hand side of \eqref{eq:phi_FE}--\eqref{eq:sigma_FE}.
Then, a zero of $\pmb v$ corresponds to a solution of \eqref{eq:phi_FE}--\eqref{eq:sigma_FE}.

The aim is to show $\pmb v(\pmb x) \cdot \pmb x \geq c_1 \abs{\pmb x}^2 - c_2$ with $\abs{\pmb x}=R>0$ for some $R>0$ and some constants $c_1, c_2>0$ that are independent of $\phi_h^n, \mu_h^n, \sigma_h^n$. 
We obtain similarly to the proof of Lemma \ref{lemma:energy_FE} that
\begin{align*}
    \pmb v(\pmb x)\cdot \pmb x 
    &=
    \int_\Omega \calI_h \Big[ \Big(\frac{\phi_h^n-\phi_h^{n-1}}{\Delta t}
    - \Gamma_{\phi,h}^n \Big) \mu_h^n \Big]
    + \calI_h[m(\phi_h^{n-1})] \nabla\mu_h^n \cdot \nabla \mu_h^n \dx
    \\
    &\quad
    + \int_\Omega \calI_h \Big[ \Big( \mu_h^n  
    - A \psi_1'(\phi_h^n) - A \psi_2'(\phi_h^{n-1}) 
    + \chi_\phi \sigma_h^n \Big) 
    \big( 2 \mu_h^n - \frac{1}{\Delta t}\phi_h^n \big) \Big] \dx
    \\
    &\quad
    - \int_\Omega B \nabla\phi_h^n \cdot \nabla\big( 2 \mu_h^n - \frac{1}{\Delta t}\phi_h^n \big) \dx 
    \\
    &\quad
    + \int_\Omega \calI_h \Big[ \Big(\frac{\sigma_h^n-\sigma_h^{n-1}}{\Delta t}
    + \Gamma_{\sigma,h}^n \Big) 
    \big( \chi_\sigma \sigma_h^n - \chi_\phi \phi_h^n \big) \Big] \dx
    \\
    &\quad
    + \int_\Omega  \calI_h[n(\phi_h^{n-1})] 
    \big(\chi_\sigma \nabla\sigma_h^n - \chi_\phi \nabla\phi_h^n \big)  \cdot \nabla \big( \chi_\sigma \sigma_h^n - \chi_\phi \phi_h^n \big)  \dx
    \\
    &\quad
    + \int_{\partial\Omega } \calI_h\Big[ K \big(\sigma_h^n - \sigma_{\infty,h}^n \big) \big( \chi_\sigma \sigma_h^n - \chi_\phi \phi_h^n \big) \Big] \dH^{d-1}
    \\
    &\geq 
    C \Big( \frac{1}{\Delta t} - \frac{1}{\Delta t_*} \Big) 
    \Big(\norm{\phi_h^n}_h^2
    + \norm{\nabla\phi_h^n}_{L^2}^2
    + \norm{\sigma_h^n}_h^2 \Big)
    \\
    &\quad
    + C \Big(\norm{\mu_h^n}_h^2 
    + \norm{\nabla\mu_h^n}_{L^2}^2 
    + \norm{\nabla \sigma_h^n}_{L^2}^2
    + \norm{\sigma_h^n}_{h,{\partial\Omega }}^2 \Big)
    - C(\phi_h^{n-1}, \sigma_h^{n-1}, \sigma_{\infty,h}^n)
    \\
    &\geq 
    C \big( \abs{\pmb\phi}^2 
    + \abs{\pmb\mu}^2 
    + \abs{\pmb\sigma}^2 \big) 
    - C(\phi_h^{n-1}, \sigma_h^{n-1}, \sigma_{\infty,h}^n),
\end{align*}
for various constants $C> 0$ that are independent of $\phi_h^n, \mu_h^n, \sigma_h^n$, where $\Delta t_*$ is given by \eqref{eq:energy_FE_dt} and the coefficient vectors of $\phi_h^n, \mu_h^n, \sigma_h^n \in\calS_h$ are denoted by $\pmb\phi, \pmb\mu, \pmb\sigma \in \R^{N_h}$.

Let us remark that 
\begin{align*}
    ||| \cdot |||: \ (\pmb\phi, \pmb\mu, \pmb\sigma)
    \mapsto 
    \Big( \abs{\pmb\mu}^2
    + \aabs{-2 \pmb\mu + \frac{1}{\Delta t} \pmb\phi}^2
    + \aabs{\chi_\sigma \pmb\sigma - \chi_\phi  \pmb\phi}^2 \Big)^{1/2}
\end{align*}
defines a norm on the finite dimensional space $\R^{3 N_h}$. The definiteness of $|||\cdot|||$ can be shown as follows. Assuming $|||(\pmb\phi, \pmb\mu, \pmb\sigma)|||=0$, it follows from the first term that $\pmb\mu = \pmb 0$. The second term yields $\pmb\phi= \pmb 0$. Then, from the third term, we have $\pmb\sigma= \pmb 0$.

Hence, on noting norm equivalence in finite dimensions, 
we obtain with $R=\abs{\pmb x}=|||(\pmb\phi, \pmb\mu, \pmb\sigma)|||$ large enough that
\begin{align*}
    \pmb v(\pmb x) \cdot \pmb x
    &\geq 
    C \big( \abs{\pmb\phi}^2 
    + \abs{\pmb\mu}^2 
    + \abs{\pmb\sigma}^2 \big) 
    - C(\phi_h^{n-1}, \sigma_h^{n-1}, \sigma_{\infty,h}^n)
    \\
    &\geq 
    C \abs{\pmb x}^2 - C(\phi_h^{n-1}, \sigma_h^{n-1}, \sigma_{\infty,h}^n)
    \\
    &>0.
\end{align*}
It follows from Lemma \ref{lemma:zeros_vector_field} that there exists a zero of $\pmb v$ which corresponds to a solution of \eqref{eq:phi_FE}--\eqref{eq:sigma_FE}.
\end{proof}


In the next theorem, we assume that the source terms $\Gamma_\phi(\cdot,\cdot)$ and $\Gamma_\sigma(\cdot,\cdot)$ are Lipschitz continuous in both arguments and that the mobility functions $m(\cdot)$ and $n(\cdot)$ are constant. This makes it possible to show that solutions of \eqref{eq:phi_FE}-\eqref{eq:sigma_FE} depend continuously on the inital and boundary data if the time step size $\Delta t>0$ is small enough. In particular, discrete solutions are unique.


\begin{theorem}[Continuous dependence]
\label{theorem:FE_cont_dep}

Let $\Gamma_\phi,\Gamma_\sigma \in C^{0,1}(\R^2)$ with Lipschitz constants $L_{\Gamma_\phi}, L_{\Gamma_\sigma}>0$. Moreover, suppose that $m(\cdot)=n(\cdot)=1$.
For $i=1,2$ and $n=1,...,N_T$, let $(\phi_{h,i}^n, \mu_{h,i}^n, \sigma_{h,i}^n) \in(\calS_h)^3$ be solutions of \eqref{eq:phi_FE}--\eqref{eq:sigma_FE} with corresponding initial data $\phi_{h,i}^0, \sigma_{h,i}^0\in \calS_h$ and boundary data $\sigma_{\infty,h,i}^n \in \calS_h$, $n=1,...,N_T$. 
Let
\begin{align}
\label{eq:FE_cont_dep_dt}
    \Delta t  <
    \frac{B}{2 A^2L_{\psi_1'}^2 
    + 4 \chi_\phi^2 
    + 3 B ( L_{\Gamma_\phi} + L_{\Gamma_\sigma})}.
\end{align}
Then, there exist constants $C>0$ that are independent of $h, \Delta t$ such that
\begin{align}
\begin{split}
\label{eq:FE_cont_dep}
    &\max_{m=1,...,N_T}
    \Big( \norm{\phi_{h,1}^m-\phi_{h,2}^m}_h^2 
    + \norm{\sigma_{h,1}^m-\sigma_{h,2}^m}_h^2 \Big)
    \\
    &\quad
    + \sum_{n=1}^{N_T} \Big(
    \norm{(\phi_{h,1}^n - \phi_{h,2}^n) 
    - (\phi_{h,1}^{n-1} - \phi_{h,2}^{n-1})}_h^2 
    + \norm{(\sigma_{h,1}^n-\sigma_{h,2}^n) 
    - (\sigma_{h,1}^{n-1}-\sigma_{h,2}^{n-1})}_h^2 \Big)
    \\
    &\quad
    + \Delta t \sum_{n=1}^{N_T} \Big( 
    \norm{\mu_{h,1}^n-\mu_{h,2}^n}_h^2 
    + \norm{\nabla ( \sigma_{h,1}^n-\sigma_{h,2}^n) }_{L^2}^2
    + \norm{\sigma_{h,1}^n-\sigma_{h,2}^n}_{h,{\partial\Omega }}^2 \Big)
    \\
    &\leq
    C \Big( \norm{\phi_{h,1}^0-\phi_{h,2}^0}_h^2 
    + \norm{\sigma_{h,1}^0-\sigma_{h,2}^0}_h^2 
    + \Delta t \sum_{n=1}^{N_T} \norm{\sigma_{\infty,h,1}^n 
    - \sigma_{\infty,h,2}^n}_{h,{\partial\Omega }}^2 \Big)
    \cdot \exp(CT).
\end{split}
\end{align}
\end{theorem}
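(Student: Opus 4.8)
The plan is to derive a discrete energy estimate for the differences of the two solutions and to close it with the discrete Gronwall inequality (Lemma \ref{lemma:gronwall_discrete}), in the spirit of Lemma \ref{lemma:energy_FE} but with test functions tailored to the fact that the right-hand side of \eqref{eq:FE_cont_dep} does \emph{not} control $\nabla(\phi_{h,1}^n-\phi_{h,2}^n)$. Write $\Phi^n \coloneqq \phi_{h,1}^n-\phi_{h,2}^n$, $M^n \coloneqq \mu_{h,1}^n-\mu_{h,2}^n$, $\Sigma^n \coloneqq \sigma_{h,1}^n-\sigma_{h,2}^n$ and $\Sigma_\infty^n \coloneqq \sigma_{\infty,h,1}^n-\sigma_{\infty,h,2}^n$. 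Subtracting the two copies of \eqref{eq:phi_FE}--\eqref{eq:sigma_FE} (with $m(\cdot)=n(\cdot)=1$) yields three difference equations that are linear except for the contributions $A(\psi_1'(\phi_{h,1}^n)-\psi_1'(\phi_{h,2}^n))$, $A(\psi_2'(\phi_{h,1}^{n-1})-\psi_2'(\phi_{h,2}^{n-1}))$ and $\Gamma_i(\phi_{h,1}^n,\sigma_{h,1}^n)-\Gamma_i(\phi_{h,2}^n,\sigma_{h,2}^n)$; these I would control nodewise by the Lipschitz constants $L_{\psi_1'},L_{\psi_2'},L_{\Gamma_\phi},L_{\Gamma_\sigma}$, using that the mass-lumping quadrature weights are non-negative.

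First I would test the difference of \eqref{eq:phi_FE} with $\zeta_h=\Phi^n$, the difference of \eqref{eq:sigma_FE} with $\xi_h=\Sigma^n$, and the difference of \eqref{eq:mu_FE} with $\rho_h=\tfrac{1}{B}(M^n-\chi_\phi\Sigma^n)$, and add the three resulting identities. The decisive point is that with exactly this choice the gradient cross-terms cancel: the $\phi$-test produces $\int_\Omega\nabla M^n\cdot\nabla\Phi^n\dx$, the $\sigma$-test produces $-\chi_\phi\int_\Omega\nabla\Phi^n\cdot\nabla\Sigma^n\dx$, and the $\mu$-test produces $-\int_\Omega\nabla\Phi^n\cdot\nabla M^n\dx+\chi_\phi\int_\Omega\nabla\Phi^n\cdot\nabla\Sigma^n\dx$, whose sum is $0$, so that $\nabla\Phi^n$ disappears from the estimate altogether. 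Applying the elementary identity \eqref{eq:elementary_identity} to the two discrete time derivatives produces the telescoping terms $\tfrac{1}{2\Delta t}(\norm{\Phi^n}_h^2-\norm{\Phi^{n-1}}_h^2)$, $\tfrac{1}{2\Delta t}(\norm{\Sigma^n}_h^2-\norm{\Sigma^{n-1}}_h^2)$ and the non-negative jump terms $\tfrac{1}{2\Delta t}\norm{\Phi^n-\Phi^{n-1}}_h^2$, $\tfrac{1}{2\Delta t}\norm{\Sigma^n-\Sigma^{n-1}}_h^2$, while the $\mu$-test contributes $\tfrac{1}{B}\norm{M^n}_h^2$ and the diffusion and Robin terms contribute $\chi_\sigma\norm{\nabla\Sigma^n}_{L^2}^2$ and $K\norm{\Sigma^n}_{h,\partial\Omega}^2$ — exactly the quantities on the left of \eqref{eq:FE_cont_dep}.

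I would then move the remaining terms to the right and estimate them by Hölder's and Young's inequalities. The products of the $\psi_i'$-differences with $M^n-\chi_\phi\Sigma^n$ and of the source-term differences with $\Phi^n,\Sigma^n$ are bounded, via the Lipschitz constants, by $C(\norm{\Phi^n}_h^2+\norm{\Phi^{n-1}}_h^2+\norm{\Sigma^n}_h^2)$ plus a small multiple of $\norm{M^n}_h^2$, the latter being absorbed into the $\tfrac{1}{B}\norm{M^n}_h^2$ on the left; the boundary contribution $K\skp{\Sigma_\infty^n}{\Sigma^n}_{h,\partial\Omega}$ is split by Young's inequality into $\tfrac{K}{2}\norm{\Sigma^n}_{h,\partial\Omega}^2$ (absorbed on the left) and $\tfrac{K}{2}\norm{\Sigma_\infty^n}_{h,\partial\Omega}^2$ (data). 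Multiplying by $2\Delta t$ and summing over $n=1,\dots,m$ then telescopes the energy and collects the dissipation sums.

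The main obstacle — and the source of the explicit step-size restriction \eqref{eq:FE_cont_dep_dt} — is that on the summed right-hand side the diagonal contributions $C\Delta t(\norm{\Phi^m}_h^2+\norm{\Sigma^m}_h^2)$ carry the current index $n=m$ and therefore cannot remain inside the Gronwall sum; they must be absorbed into $\norm{\Phi^m}_h^2+\norm{\Sigma^m}_h^2$ on the left, which is possible precisely when $2\Delta t$ times the relevant constant is below $1$. Tracking the constants shows that only $L_{\psi_1'}$ (not $L_{\psi_2'}$) enters this constraint, because $\psi_2'$ is evaluated at the \emph{old} time level $\phi_{h,i}^{n-1}$, so its difference yields $\norm{\Phi^{n-1}}_h^2$, which stays in the history sum; this is what produces the denominator $2A^2L_{\psi_1'}^2+4\chi_\phi^2+3B(L_{\Gamma_\phi}+L_{\Gamma_\sigma})$ in \eqref{eq:FE_cont_dep_dt}. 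Once these diagonal terms are absorbed, Lemma \ref{lemma:gronwall_discrete} applied with $e_n=\norm{\Phi^n}_h^2+\norm{\Sigma^n}_h^2$ and $b_i=C\Delta t$ yields the factor $\exp(CT)$, and taking the maximum over $m=1,\dots,N_T$ gives \eqref{eq:FE_cont_dep}; uniqueness of discrete solutions follows at once by taking the two data to coincide.
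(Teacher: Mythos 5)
Your proposal is correct and follows essentially the same route as the paper: the paper tests the difference equations with $\zeta_h=B\phi_h^n$, $\rho_h=\Delta t(\mu_h^n-\chi_\phi\sigma_h^n)$, $\xi_h=B\sigma_h^n$, which are exactly your test functions up to the harmless overall factor $B$ (and the pre-multiplication by $\Delta t$), so the same cancellation of the gradient cross-terms eliminates $\nabla(\phi_{h,1}^n-\phi_{h,2}^n)$ from the estimate. Your treatment of the nonlinearities via nodewise Lipschitz bounds, the Young splitting of the Robin data term, the absorption of the $n=m$ diagonal terms yielding precisely the denominator in \eqref{eq:FE_cont_dep_dt} (with only $L_{\psi_1'}$ entering because $\psi_2'$ is evaluated at the old time level), and the concluding discrete Gronwall step all match the paper's argument.
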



\begin{proof}
For $n=1,...,N_T$, suppose there are two solutions of \eqref{eq:phi_FE}--\eqref{eq:sigma_FE} denoted by $(\phi_{h,i}^n, \mu_{h,i}^n, \sigma_{h,i}^n)$, $i=1,2$, with corresponding initial data $\phi_{h,i}^0,\sigma_{h,i}^0$ and boundary data $\sigma_{\infty,h,i}^0$. Let us denote the differences by
\begin{align*}
    \phi_h^n = \phi_{h,1}^n-\phi_{h,2}^n, \quad 
    \mu_h^n = \mu_{h,1}^n-\mu_{h,2}^n, \quad 
    \sigma_h^n = \sigma_{h,1}^n-\sigma_{h,2}^n, \quad
    \sigma_{\infty,h}^n = \sigma_{\infty,h,1}^n - \sigma_{\infty,h,2}^n.
\end{align*} 
It holds that
\begin{subequations}
\begin{align}
    \label{eq:phi_FE_cont_dep}
    \int_\Omega \calI_h \Big[ 
    (\phi_h^n - \phi_h^{n-1}) \zeta_h
    - \Delta t \Big(\Gamma_{\phi}(\phi_{h,1}^n,\sigma_{h,1}^n) 
    - \Gamma_{\phi}(\phi_{h,2}^n,\sigma_{h,2}^n) \Big) \zeta_h  \Big]
    +  \Delta t \nabla\mu_h^n \cdot \nabla \zeta_h \dx
    = 0,
    \\
    \nonumber
    \label{eq:mu_FE_cont_dep}
    \int_\Omega \calI_h \Big[ \Big( \mu_h^n  
    + \chi_\phi \sigma_h^n \Big) \rho_h 
    - A \Big( \psi_1'(\phi_{h,1}^n) - \psi_1'(\phi_{h,2}^n) 
    + \psi_2'(\phi_{h,1}^{n-1}) - \psi_2'(\phi_{h,2}^{n-1})\Big) \rho_h \Big] \dx
    \quad\quad
    \\
    - \int_\Omega B \nabla\phi_h^n \cdot \nabla\rho_h \dx 
    = 0,
    \\
    \nonumber
    \label{eq:sigma_FE_cont_dep}
    \int_\Omega \calI_h \Big[  
    (\sigma_h^n - \sigma_h^{n-1}) \xi_h
    + \Delta t \Big( \Gamma_\sigma(\phi_{h,1}^n,\sigma_{h,1}^n) 
    - \Gamma_\sigma(\phi_{h,2}^n,\sigma_{h,2}^n) \Big) \xi_h \Big] \dx
    \quad\quad
    \\
    + \int_\Omega \Delta t \big(\chi_\sigma \nabla\sigma_h^n - \chi_\phi \nabla\phi_h^n \big)  \cdot \nabla \xi_h  \dx
    + \int_{\partial\Omega } \Delta t K \calI_h\Big[ 
    (\sigma_h^n - \sigma_{\infty,h}^n) \xi_h \Big] \dH^{d-1}
    = 0,
\end{align}
\end{subequations}
for all $\zeta_h,\rho_h,\xi_h\in S^h$. 
Setting $\zeta_h= B \phi_h^n$, $\rho_h=\Delta t(\mu_h^n-\chi_\phi\sigma_h^n)$, $\xi_h= B \sigma_h^n$ in \eqref{eq:phi_FE_cont_dep}--\eqref{eq:sigma_FE_cont_dep} and adding the resulting equations yields, on noting \eqref{eq:elementary_identity}, that
\begin{align}
\begin{split}
\label{eq:FE_cont_dep_1}
    &\frac{B}{2} \Big(\norm{\phi_h^n}_h^2 
    - \norm{\phi_h^{n-1}}_h^2 
    + \norm{\phi_h^n - \phi_h^{n-1}}_h^2 
    + \norm{\sigma_h^n}_h^2 
    - \norm{\sigma_h^{n-1}}_h^2 
    + \norm{\sigma_h^n - \sigma_h^{n-1}}_h^2 \Big)
    \\
    &\quad
    + \Delta t  \norm{\mu_h^n}_h^2 
    -\Delta t \chi_\phi^2 \norm{\sigma_h^n}_h^2 
    + \Delta t  B\chi_\sigma \norm{\nabla \sigma_h^n}_{L^2}^2
    + \Delta t  B K \norm{\sigma_h^n}_{h,{\partial\Omega }}^2 
    \\
    &=
    \Delta t  A \int_\Omega \calI_h \Big[ \Big(\psi_1^\prime(\phi_{h,1}^n)
    - \psi_1^\prime(\phi_{h,2}^n) 
    + \psi_2'(\phi_{h,1}^{n-1}) - \psi_2'(\phi_{h,2}^{n-1}) \Big) 
    \big( \mu_h^n-\chi_\phi\sigma_h^n \big) \Big] \dx
    \\
    &\quad
    + \Delta t  B \int_\Omega \calI_h \Big[ \Big(\Gamma_\phi(\phi_{h,1}^n,\sigma_{h,1}^n) 
    - \Gamma_\phi(\phi_{h,2}^n,\sigma_{h,2}^n)\Big) \phi_h^n 
    \Big] \dx
    \\
    &\quad
    - \Delta t  B \int_\Omega \calI_h \Big[ \Big(\Gamma_\sigma(\phi_{h,1}^n,\sigma_{h,1}^n) 
    - \Gamma_\sigma(\phi_{h,2}^n,\sigma_{h,2}^n)\Big) \sigma_h^n \Big] \dx 
    \\
    &\quad 
    +\Delta t B K \int_{\partial\Omega } \calI_h \big[ \sigma_{\infty,h}^n \sigma_h^n \big] \dH^{d-1}.
\end{split}
\end{align}
Using the Lipschitz assumptions of $\psi_1',\psi_2',\Gamma_\phi, \Gamma_\sigma$, we have
\begin{align*}
    \abs{\psi_1^\prime(\phi_{h,1}^n)
    - \psi_1^\prime(\phi_{h,2}^n)} 
    &\leq L_{\psi_1^\prime} \abs{\phi_h^n},
    \\
    \abs{\psi_2^\prime(\phi_{h,1}^{n-1})
    - \psi_2^\prime(\phi_{h,2}^{n-1})} 
    &\leq L_{\psi_2^\prime} \abs{\phi_h^{n-1}},
    \\
    \aabs{ \Gamma_\phi(\phi_{h,1}^n,\sigma_{h,1}^n) 
    - \Gamma_\phi(\phi_{h,2}^n,\sigma_{h,2}^n) }
    &\leq L_{\Gamma_\phi}
    \big(\abs{\phi_h^n} + \abs{\sigma_h^n} \big),
    \\
    \aabs{ \Gamma_\sigma(\phi_{h,1}^n,\sigma_{h,1}^n) 
    - \Gamma_\sigma(\phi_{h,2}^n,\sigma_{h,2}^n) }
    &\leq L_{\Gamma_\sigma}
    \big(\abs{\phi_h^n} + \abs{\sigma_h^n} \big).
\end{align*}

On noting Young's inequality, we obtain together with \eqref{eq:FE_cont_dep_1}, that
\begin{align}
\begin{split}
\label{eq:FE_cont_dep_2}
    &\frac{B}{2} \Big(\norm{\phi_h^n}_h^2 
    - \norm{\phi_h^{n-1}}_h^2 
    + \norm{\phi_h^n - \phi_h^{n-1}}_h^2 
    + \norm{\sigma_h^n}_h^2 
    - \norm{\sigma_h^{n-1}}_h^2 
    + \norm{\sigma_h^n - \sigma_h^{n-1}}_h^2 \Big)
    \\
    &\quad
    + \Delta t  \norm{\mu_h^n}_h^2 
    -\Delta t \chi_\phi^2 \norm{\sigma_h^n}_h^2 
    + \Delta t  B\chi_\sigma \norm{\nabla \sigma_h^n}_{L^2}^2
    + \Delta t  B K \norm{\sigma_h^n}_{h,{\partial\Omega }}^2 
    \\
    &\leq
    \Delta t  A  \int_\Omega \calI_h \Big[ 
    \big( L_{\psi_1^\prime} \abs{\phi_h^n} 
    + L_{\psi_2^\prime} \abs{\phi_h^{n-1}} \big) 
    \big( \abs{\mu_h^n} + \chi_\phi \abs{\sigma_h^n} \big) \Big] \dx
    \\
    &\quad
    +\Delta t  B  \int_\Omega \calI_h \Big[ 
    L_{\Gamma_\phi} \big(\abs{\phi_h^n} + \abs{\sigma_h^n} \big) \abs{\phi_h^n} 
    + L_{\Gamma_\sigma} \big(\abs{\phi_h^n} + \abs{\sigma_h^n} \big) \abs{\sigma_h^n} \Big] \dx 
    \\
    &\quad
    + \frac{1}{2} \Delta t B K \big( \norm{\sigma_{\infty,h}^n}_{h,{\partial\Omega }}^2 
    + \norm{\sigma_h^n}_{h,{\partial\Omega }}^2 \big)
    \\
    &\leq
    \frac{1}{2} \Delta t \norm{\mu_h^n}_h^2 
    + \Delta t  \big(  A^2 L_{\psi_1^\prime}^2 + \chi_\phi^2 \big) \big( \norm{\phi_h^n}_h^2 
    + \norm{\sigma_h^n}_h^2 \big)
    + C(A,\chi_\phi,L_{\psi_1'},L_{\psi_2'}) 
    \Delta t  \norm{\phi_h^{n-1}}_h^2 
    \\
    &\quad
    + \Delta t  B \big( \frac{3}{2} L_{\Gamma_\phi} 
    + \frac{1}{2} L_{\Gamma_\sigma} \big) \norm{\phi_h^n}_h^2 
    + \Delta t B \big( \frac{1}{2} L_{\Gamma_\phi} 
    + \frac{3}{2} L_{\Gamma_\sigma} \big)  \norm{\sigma_h^n}_h^2 
    \\
    &\quad
    + \frac{1}{2} \Delta t B K \big( \norm{\sigma_{\infty,h}^n}_{h,{\partial\Omega }}^2 
    + \norm{\sigma_h^n}_{h,{\partial\Omega }}^2 \big).
\end{split}
\end{align}


Absorbing the terms on the right-hand side and summing from $n=1,...,m$, where $m=1,...,N_T$, leads to
\begin{align}
\begin{split}
\label{eq:FE_cont_dep_3}
    &\frac{B}{2} \big( \norm{\phi_h^m}_h^2 
    + \norm{\sigma_h^m}_h^2 \big)
    + \frac{B}{2} \sum_{n=1}^m \Big(
    \norm{\phi_h^n - \phi_h^{n-1}}_h^2 
    + \norm{\sigma_h^n - \sigma_h^{n-1}}_h^2 \Big)
    \\
    &\quad
    +  \Delta t \sum_{n=1}^m \Big( 
    \frac{1}{2} \norm{\mu_h^n}_h^2 
    +  B\chi_\sigma \norm{\nabla \sigma_h^n}_{L^2}^2
    + \frac{1}{2}  B K \norm{\sigma_h^n}_{h,{\partial\Omega }}^2 \Big)
    \\
    &\leq
    \frac{B}{2} \Big( \norm{\phi_h^0}_h^2 
    + \norm{\sigma_h^0}_h^2 
    + \Delta t \sum_{n=1}^{N_T} \norm{\sigma_{\infty,h}^n}_{h,{\partial\Omega }}^2 \Big)
    + \Delta t \sum_{n=0}^{m-1}  C(A,\chi_\phi,L_{\psi_1'},L_{\psi_2'})  \norm{\phi_h^n}_h^2 
    \\
    &\quad
    + \Delta t \sum_{n=1}^m \Big(
    2 \chi_\phi^2 
    +  A^2 L_{\psi_1^\prime}^2 
    +  \frac{3}{2} B ( L_{\Gamma_\phi} + L_{\Gamma_\sigma})
    \Big) \Big( \norm{\phi_h^n}_h^2  + \norm{\sigma_h^n}_h^2 \Big).
\end{split}
\end{align}


In order to apply a discrete Gronwall argument, we absorb the terms on the right-hand side of \eqref{eq:FE_cont_dep_3} with index $n=m$. Therefore, we receive 
\begin{align}
\begin{split}
\label{eq:FE_cont_dep_4}
    &\Big( \frac{B}{2} 
    - \Delta t \big( 2 \chi_\phi^2 
    +  A^2 L_{\psi_1^\prime}^2 
    +  \frac{3}{2} B ( L_{\Gamma_\phi} + L_{\Gamma_\sigma})
    \big) \Big)
    \Big( \norm{\phi_h^m}_h^2 
    + \norm{\sigma_h^m}_h^2 \Big)
    \\
    &\quad
    + \frac{B}{2} \sum_{n=1}^m \Big(
    \norm{\phi_h^n - \phi_h^{n-1}}_h^2 
    + \norm{\sigma_h^n - \sigma_h^{n-1}}_h^2 \Big)
    \\
    &\quad
    +  \Delta t \sum_{n=1}^m \Big( 
    \frac{1}{2} \norm{\mu_h^n}_h^2 
    +  B\chi_\sigma \norm{\nabla \sigma_h^n}_{L^2}^2
    + \frac{1}{2}  B K \norm{\sigma_h^n}_{h,{\partial\Omega }}^2 \Big)
    \\
    &\leq
    \frac{B}{2} \Big( \norm{\phi_h^0}_h^2 
    + \norm{\sigma_h^0}_h^2 
    + \Delta t \sum_{n=1}^{N_T} \norm{\sigma_{\infty,h}^n}_{h,{\partial\Omega }}^2 \Big)
    \\
    &\quad
    + \Delta t \sum_{n=0}^{m-1} 
    C(A,\chi_\phi,L_{\psi_1'},L_{\psi_2'},B, L_{\Gamma_\phi}, L_{\Gamma_\sigma}) \Big( \norm{\phi_h^n}_h^2  + \norm{\sigma_h^n}_h^2 \Big).
\end{split}
\end{align}

The terms on the left-hand side are nonnegative supposed that
\begin{align}
    \Delta t  <
    \frac{B}{2 A^2L_{\psi_1'}^2 
    + 4 \chi_\phi^2 
    + 3 B ( L_{\Gamma_\phi} + L_{\Gamma_\sigma})}.
\end{align}
Hence, we can deduce from Lemma \ref{lemma:gronwall_discrete} that there exist constants $C>0$ that are independent of $h, \Delta t$ such that
\begin{align}
\begin{split}
\label{eq:FE_cont_dep_5}
    &\norm{\phi_h^m}_h^2 
    + \norm{\sigma_h^m}_h^2
    + \sum_{n=1}^m \Big(
    \norm{\phi_h^n - \phi_h^{n-1}}_h^2 
    + \norm{\sigma_h^n - \sigma_h^{n-1}}_h^2 \Big)
    \\
    &\quad
    +  \Delta t \sum_{n=1}^m \Big( 
    \norm{\mu_h^n}_h^2 
    + \norm{\nabla \sigma_h^n}_{L^2}^2
    + \norm{\sigma_h^n}_{h,{\partial\Omega }}^2 \Big)
    \\
    &\leq
    C \Big( \norm{\phi_h^0}_h^2 
    + \norm{\sigma_h^0}_h^2 
    + \Delta t \sum_{n=1}^{N_T} \norm{\sigma_{\infty,h}^n}_{h,{\partial\Omega }}^2 \Big)
    \cdot \exp(CT).
\end{split}
\end{align}
Taking the maximum over $m=1,...,N_T$ on the left-hand side proves the result.
\end{proof}


\begin{remark}~
\begin{enumerate}
\item[1.]
In practice, the constants $A$ and $B$ are usually defined as $A=\frac{\beta}{\epsilon}$ and $B=\beta\epsilon$, respectively, where $\epsilon$ is a small constant. From Lemma \ref{lemma:energy_FE} and Theorem \ref{theorem:existence_FE}, we have the condition $\Delta t = \calO(\epsilon)$ for the time step size in order to obtain stability and existence of solutions of \eqref{eq:phi_FE}--\eqref{eq:sigma_FE}.
In contrast to this, and with additional assumptions on the mobility functions and source terms, we can deduce from Theorem \ref{theorem:FE_cont_dep} that the time step size must fulfill the condition $\Delta t = \calO(\epsilon^3)$ to obtain continuous dependence and, in particular, uniqueness of discrete solutions.

\item[2.]
Suppose that the source terms have the specific form
\begin{align}
\label{eq:Gamma_phi_sigma}
    \Gamma_\phi(\phi,\sigma) 
    = \big( \lambda_p \sigma - \lambda_a \big) h(\phi) ,
    \quad\quad
    \Gamma_\sigma(\phi,\sigma) 
    = \lambda_c \sigma h(\phi),
\end{align}
for all $\phi,\sigma\in\R$, where $\lambda_p,\lambda_a,\lambda_c$ are nonnegative constants referring to proliferation, apoptosis and consumption rate. Moreover, $h:\R\to\R$ is a nonnegative, bounded and Lipschitz continuous function with $h(-1)=0$ and $h(1)=1$. This specific choice of the source terms is motivated by linear kinetics and is a common choice for numerical simulations of tumour growth models \cite{ebenbeck_garcke_nurnberg_2020, GarckeLSS_2016}.

With the choice \eqref{eq:Gamma_phi_sigma}, continuous dependence of solutions of \eqref{eq:phi_FE}--\eqref{eq:sigma_FE} on the initial and boundary data can be shown analogously to Theorem \ref{theorem:FE_cont_dep} if the time step size is small enough. The main difference is that
\begin{align*}
    \abs{\sigma_{h,1}^n h(\phi_{h,1}^n)
    -\sigma_{h,2}^n h(\phi_{h,2}^n)} 
    &\leq \abs{\sigma_{h,1}^n} \abs{h(\phi_{h,1}^n)-h(\phi_{h,2}^n)} 
    + \abs{\sigma_{h,1}^n - \sigma_{h,2}^n} \abs{h(\phi_{h,2}^n)}
    \\
    &\leq L_h \abs{\sigma_{h,1}^n} \abs{\phi_{h,1}^n - \phi_{h,2}^n} 
    + h_\infty \abs{\sigma_{h,1}^n - \sigma_{h,2}^n},
\end{align*}
where $L_h$ denotes the Lipschitz constant of $h(\cdot)$ and $h_\infty = \norm{h(\cdot)}_{L^\infty(\R)} $. Following the proof, one then has to handle triple products of the form
\begin{align*}
    \int_\Omega \calI_h\Big[ 
    \abs{\sigma_{h,1}^n} 
    \abs{\phi_{h,1}^n - \phi_{h,2}^n}^2 \Big] \dx
    +
    \int_\Omega \calI_h\Big[ \abs{\sigma_{h,1}^n} 
    \abs{\phi_{h,1}^n - \phi_{h,2}^n} 
    \abs{\sigma_{h,1}^n - \sigma_{h,2}^n} \Big] \dx.
\end{align*}
With further calculations, these terms can be bounded by
\begin{align*}
    C_1 \norm{\phi_{h,1}^n - \phi_{h,2}^n}_{H^1}^2
    + C_2 \norm{\sigma_{h,1}^n - \sigma_{h,2}^n}_h^2
    + C_3 \norm{\nabla(\sigma_{h,1}^n - \sigma_{h,2}^n)}_{L^2}^2
\end{align*}
where the constant $C_1$ depends on $\max\limits_{n=1,...,N_T} \norm{\sigma_{h,1}^n}_h^2$, which can be bounded uniformly in $(h,\Delta t)$ if the time step size is small enough, see Lemma \ref{lemma:energy_FE}. The constants $C_2, C_3$ arise from Young's inequality and only depend on the model parameters.
The third term can be absorbed whereas the first two terms can be handled with a discrete Gronwall argument, i.e.~Lemma \ref{lemma:gronwall_discrete}, if the time step size satisfies an additional constraint.
\end{enumerate}
\end{remark}
\section{Higher order estimates}

In this section we prove higher order estimates for solutions of \eqref{eq:phi_FE}--\eqref{eq:sigma_FE}.
This is needed in order to show more compactness properties for $\phi$ in space dimensions $d=2,3$ which is needed in presence of the nodal interpolation operator $\calI_h$. However, for $d=1$, the stability estimates \eqref{eq:energy_FE} give enough spatial regularity to pass to the limit in the scheme \eqref{eq:phi_FE}--\eqref{eq:sigma_FE}.

At first, we introduce the projection operator $\hat Q_h: L^2(\Omega) \to \calS_h$ defined by
\begin{align}
\label{def:projection_Q}
    \int_\Omega \calI_h \Big[ \hat Q_h \eta \zeta_h \Big] \dx
    = \int_\Omega \eta \zeta_h \dx, \quad \forall \zeta_h\in \calS_h.
\end{align}
It holds, see, e.g., \cite{barrett_blowey_1996, barrett_blowey_garcke_2000}:
\begin{align}
    \label{eq:projection_Q_hat}
    \norm{\eta - \hat Q_h \eta}_{L^2}
    + h \norm{\nabla \eta - \nabla \hat Q_h \eta}_{L^2}
    & \leq C h \norm{\nabla \eta}_{L^2} 
    \qquad \forall \eta \in H^1(\Omega),
\end{align}
with a constant $C>0$ which is independent of $h$.


\begin{lemma}
\label{lemma:bounds_higher_order}
Let the assumptions of Theorem \ref{theorem:existence_FE} hold. Then it holds
\begin{subequations}
\begin{align}
\begin{split}
    \label{eq:bounds_higher_order}
    & \Delta t \sum_{n=1}^{N_T} \Big( 
    \norm{\Delta_h \phi_h^n}_{L^2}^2 
    + \nnnorm{\frac{\phi_h^n - \phi_h^{n-1}}{\Delta t}}_{(H^1)'}^2 
    + \nnnorm{\frac{\sigma_h^n - \sigma_h^{n-1}}{\Delta t}}_{(H^1)'}^2 \Big)
    \leq C,
\end{split}
\end{align}
and
\begin{align}
    \label{eq:bounds_phi_translation}
    \Delta t 
    \sum_{n=0}^{N_T - l}
    \norm{ \phi_h^{n+l}
    - \phi_h^n}_{L^2}^2 
    \leq C l \Delta t,
\end{align}
for any $l \in \{1,...,N_T\}$, where the constants $C>0$ are independent of $h,\Delta t$.
\end{subequations}
\end{lemma}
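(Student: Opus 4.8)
The plan is to obtain all four bounds from the stability estimate \eqref{eq:energy_FE} of Lemma \ref{lemma:energy_FE}, using the chemical-potential equation to control $\Delta_h\phi_h^n$, the two evolution equations tested against a lumping-adapted projection to control the discrete time derivatives in $(H^1)'$, and finally a duality/interpolation inequality to bootstrap the latter into the $L^2$ translation estimate. First I would bound $\norm{\Delta_h\phi_h^n}_{L^2}$. Since $\Delta_h\phi_h^n\in\calS_h$ it is an admissible test function, so I take $\rho_h=\Delta_h\phi_h^n$ in \eqref{eq:mu_FE}; the definition \eqref{eq:discr_laplace} turns the term $-B\nabla\phi_h^n\cdot\nabla\rho_h$ into $B\norm{\Delta_h\phi_h^n}_h^2$, leaving
\[
    B\norm{\Delta_h\phi_h^n}_h^2 = \skp{A\psi_1'(\phi_h^n)+A\psi_2'(\phi_h^{n-1})-\chi_\phi\sigma_h^n-\mu_h^n}{\Delta_h\phi_h^n}_h.
\]
Cauchy--Schwarz and Young absorb half of $\norm{\Delta_h\phi_h^n}_h^2$, while the linear growth \eqref{A4_2} of $\psi_1',\psi_2'$ bounds the rest by $C(1+\norm{\phi_h^n}_h^2+\norm{\phi_h^{n-1}}_h^2+\norm{\sigma_h^n}_h^2+\norm{\mu_h^n}_h^2)$. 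Multiplying by $\Delta t$, summing, and invoking \eqref{eq:energy_FE} together with the norm equivalence \eqref{eq:norm_equiv} gives the bound on $\Delta t\sum_n\norm{\Delta_h\phi_h^n}_{L^2}^2$.

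For the two dual-norm bounds the decisive device is the projection $\hat Q_h$ of \eqref{def:projection_Q}, which is tailored to the mass lumping: by its very definition $\skp{w_h}{\hat Q_h\eta}_h=\skp{w_h}{\eta}_{L^2}$ for all $w_h\in\calS_h$ and $\eta\in L^2(\Omega)$. Hence, for the increment $w_h=\frac{\phi_h^n-\phi_h^{n-1}}{\Delta t}\in\calS_h$ and any $\eta\in H^1(\Omega)$, I rewrite $\int_\Omega w_h\eta\dx=\skp{w_h}{\hat Q_h\eta}_h$ and insert the admissible test function $\zeta_h=\hat Q_h\eta$ into \eqref{eq:phi_FE}, expressing the pairing purely through $\Gamma_{\phi,h}^n$ and $\nabla\mu_h^n$. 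The stability estimate \eqref{eq:projection_Q_hat} (with \eqref{eq:norm_equiv}) yields $\norm{\hat Q_h\eta}_h+\norm{\nabla\hat Q_h\eta}_{L^2}\leq C\norm{\eta}_{H^1}$, so taking the supremum over $\norm{\eta}_{H^1}\leq1$ leaves $\nnnorm{\frac{\phi_h^n-\phi_h^{n-1}}{\Delta t}}_{(H^1)'}\leq C(\norm{\Gamma_{\phi,h}^n}_h+\norm{\nabla\mu_h^n}_{L^2})$; squaring, multiplying by $\Delta t$ and summing, the linear growth $(A2)$ and \eqref{eq:energy_FE} close the bound. The same argument with $\xi_h=\hat Q_h\eta$ in \eqref{eq:sigma_FE} handles $\sigma$, the only addition being the boundary contribution, controlled via \eqref{eq:norm_equiv_Gamma}, the trace theorem, \eqref{eq:bounds_bc} and again \eqref{eq:energy_FE}.

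Finally, for the translation estimate \eqref{eq:bounds_phi_translation} I would use the duality inequality $\norm{v}_{L^2}^2\leq\norm{v}_{(H^1)'}\norm{v}_{H^1}$ with $v=\phi_h^{n+l}-\phi_h^n$, noting that $\norm{\phi_h^{n+l}-\phi_h^n}_{H^1}\leq C$ uniformly by \eqref{eq:energy_FE}. Writing the increment as the telescoping sum $\phi_h^{n+l}-\phi_h^n=\Delta t\sum_{j=n+1}^{n+l}\frac{\phi_h^j-\phi_h^{j-1}}{\Delta t}$ and using the triangle inequality in $(H^1)'$ gives $\norm{\phi_h^{n+l}-\phi_h^n}_{L^2}^2\leq C\Delta t\sum_{j=n+1}^{n+l}\nnnorm{\frac{\phi_h^j-\phi_h^{j-1}}{\Delta t}}_{(H^1)'}$. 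Summing over $n$, interchanging the order of summation (each index $j$ occurs for at most $l$ values of $n$) and applying Cauchy--Schwarz together with the already established bound on $\Delta t\sum_j\nnnorm{\frac{\phi_h^j-\phi_h^{j-1}}{\Delta t}}_{(H^1)'}^2$ produces the factor $l\Delta t$.

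I expect the main obstacle to be the correct treatment of the mass lumping in the dual-norm estimates: a naive split using the nodal or Clément interpolant would leave an error term proportional to $h\,\norm{\frac{\phi_h^n-\phi_h^{n-1}}{\Delta t}}_{L^2}$, which cannot be absorbed since the time difference is not controlled in $L^2$. Recognising that $\hat Q_h$ satisfies the \emph{exact} identity $\skp{\cdot}{\hat Q_h\eta}_h=\skp{\cdot}{\eta}_{L^2}$ on $\calS_h$, so that no lumping error is incurred at all, is the step that makes the argument go through; everything else is careful but routine bookkeeping of the stability bounds.
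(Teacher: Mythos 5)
Your argument is correct, and for the first three bounds it coincides with the paper's proof: the $\Delta_h\phi_h^n$ estimate via testing \eqref{eq:mu_FE} with $\rho_h=\Delta_h\phi_h^n$, and the two dual-norm estimates via the exact identity $\skp{w_h}{\hat Q_h\eta}_h=\skp{w_h}{\eta}_{L^2}$ for $w_h\in\calS_h$ combined with the stability \eqref{eq:projection_Q_hat} of $\hat Q_h$, are exactly the steps taken there. Your observation that $\hat Q_h$ incurs no lumping error is indeed the point that makes the dual-norm argument clean.

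For the translation estimate \eqref{eq:bounds_phi_translation} you take a genuinely different route. The paper goes back to the scheme: it tests \eqref{eq:phi_FE} with $\zeta_h=\Delta t(\phi_h^{m+l}-\phi_h^m)$, sums the \emph{equations} over $n=m+1,\dots,m+l$ so that the time increments telescope into $\norm{\phi_h^{m+l}-\phi_h^m}_h^2$ on the left, and then bounds the accumulated right-hand side by $C\Delta t\sum_{k=1}^l(1+\norm{\nabla\mu_h^{m+k}}_{L^2})\norm{\phi_h^{m+l}-\phi_h^m}_{H^1}$ before summing over $m$. You instead telescope the \emph{increment} in $(H^1)'$ and invoke the interpolation inequality $\norm{v}_{L^2}^2\leq\norm{v}_{(H^1)'}\norm{v}_{H^1}$ together with the uniform $H^1$ bound from \eqref{eq:energy_FE} and the already-established bound on $\Delta t\sum_j\nnnorm{\frac{\phi_h^j-\phi_h^{j-1}}{\Delta t}}_{(H^1)'}^2$; the double-sum count (each $j$ hit by at most $l$ values of $n$) and Cauchy--Schwarz then produce the factor $l\Delta t$ exactly as claimed. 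Your version is more modular, since it reuses the dual-norm bound rather than revisiting the scheme, at the price of invoking the duality inequality; the paper's version is self-contained at the level of the discrete equations. Both deliver the same $O(l\Delta t)$ rate, so there is no gap.
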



\begin{proof}
On noting the definition of the discrete Laplacian \eqref{eq:discr_laplace}, we can rewrite \eqref{eq:mu_FE} as
\begin{align*}
    \int_\Omega \calI_h \Big[ \Big( - \mu_h^n  
    + A \psi_1'(\phi_h^n) + A \psi_2'(\phi_h^{n-1}) 
    - \chi_\phi \sigma_h^n
    - B \Delta_h \phi_h^n \Big) \rho_h \Big] 
    = 0, \quad\quad \forall \zeta_h\in\calS_h.
\end{align*}
Choosing $\rho_h = \Delta_h \phi_h^n$ and dividing both sides by $B$ yields
\begin{align*}
    \norm{\Delta_h \phi_h^n}_h^2
    &= \frac{1}{B} \int_\Omega \calI_h \Big[
    \big(- \mu_h^n + A \psi_1'(\phi_h^n) + A \psi_2'(\phi_h^{n-1})
    - \chi_\phi \sigma_h^n \big) \Delta_h \phi_h^n \Big].
\end{align*}
Together with Young's inequality and the growth assumptions on $\psi_1', \psi_2'$, we obtain
\begin{align*}
    \norm{\Delta_h \phi_h^n}_h^2
    &\leq 
    \frac{1}{2} \norm{\Delta_h \phi_h^n}_h^2 
    + \frac{2}{B^2} \norm{\mu_h^n}_h^2 
    + \frac{2A^2}{B^2} \norm{\psi_1'(\phi_h^n)}_h^2
    + \frac{2A^2}{B^2} \norm{\psi_2'(\phi_h^{n-1})}_h^2 
    + \frac{2\chi_\phi^2}{B^2}  \norm{\sigma_h^n}_h^2 
    \\
    &\leq 
    \frac{1}{2} \norm{\Delta_h \phi_h^n}_h^2 
    + C\Big( 1 
    + \norm{\mu_h^n}_h^2 
    + \norm{\phi_h^n}_h^2
    + \norm{\phi_h^{n-1}}_h^2 
    + \norm{\sigma_h^n}_h^2 \Big).
\end{align*}

Absorbing $\frac{1}{2} \norm{\Delta_h \phi_h^n}_h^2 $ to the left-hand side, noting \eqref{eq:energy_FE}, multiplying both sides with $\Delta t$ and summing from $n=1,...,N_T$ yields 
\begin{align}
\begin{split}
    \Delta t \sum\limits_{n=1}^{N_T} 
    \norm{\Delta_h \phi_h^n}_h^2 
    &\leq 
    \Delta t \sum\limits_{n=1}^{N_T} C\Big( 1 
    + \norm{\mu_h^n}_h^2 
    + \norm{\phi_h^n}_h^2
    + \norm{\phi_h^{n-1}}_h^2 
    + \norm{\sigma_h^n}_h^2 \Big)
    \leq C.
\end{split}
\end{align}
Applying \eqref{eq:norm_equiv} leads to the first bound in \eqref{eq:bounds_higher_order}.

Let $\zeta\in H^1(\Omega)$. On noting \eqref{def:projection_Q}, \eqref{eq:projection_Q_hat}, \eqref{eq:phi_FE}, Hölder's inequality, the linear growth of $\Gamma_\phi$ and \eqref{eq:norm_equiv}, we obtain
\begin{align*}
    &\int_\Omega \Big(\frac{\phi_h^n - \phi_h^{n-1}}{\Delta t}\Big) \zeta \dx
    =
    \int_\Omega \calI_h \Big[ \Big(\frac{\phi_h^n - \phi_h^{n-1}}{\Delta t}\Big) \hat Q_h \zeta \Big] \dx
    \\
    &= \int_\Omega \calI_h\big[ \Gamma_{\phi,h}^n \hat Q_h\zeta \big] 
    - m(\phi_h^{n-1})\nabla\mu_h^n \cdot \nabla\hat Q_h\zeta \dx
    \\
    &\leq C \big(1 + \norm{\phi_h^n}_h + \norm{\sigma_h^n}_h \big)
    \norm{\hat Q_h \zeta}_h
    + m_1 \norm{\nabla \mu_h^n}_{L^2} \norm{\nabla \hat Q_h\zeta}_{L^2}
    \\
    &\leq C( 1 
    + \norm{\phi_h^n}_h + \norm{\sigma_h^n}_h 
    + \norm{\nabla \mu_h^n}_{L^2} \Big)
    \norm{\zeta}_{H^1},
\end{align*}
which gives
\begin{align}
\label{eq:bounds_higher_order_1}
    \nnnorm{\frac{\phi_h^n - \phi_h^{n-1}}{\Delta t}}_{(H^1)'} 
    \leq C\Big( 1 
    + \norm{\phi_h^n}_h + \norm{\sigma_h^n}_h 
    + \norm{\nabla \mu_h^n}_{L^2} \Big).
\end{align}


Similarly, we receive from \eqref{def:projection_Q}, \eqref{eq:projection_Q_hat}, \eqref{eq:sigma_FE}, Hölder's inequality, the linear growth of $\Gamma_\sigma$, \eqref{eq:norm_equiv}, \eqref{eq:norm_equiv_Gamma} and the trace theorem, that
\begin{align*}
    &\int_\Omega \Big(\frac{\sigma_h^n - \sigma_h^{n-1}}{\Delta t} \Big) \zeta \dx
    =
    \int_\Omega \calI_h \Big[ \Big(\frac{\sigma_h^n - \sigma_h^{n-1}}{\Delta t}\Big) \hat Q_h \zeta \Big] \dx
    \\
    &= \int_\Omega - \calI_h\big[ \Gamma_{\sigma,h}^n \hat Q_h\zeta \big] 
    - n(\phi_h^{n-1})(\chi_\sigma \nabla\sigma_h^n - \chi_\phi \nabla\phi) \cdot \nabla\hat Q_h\zeta \dx
    \\
    &\quad
    + \int_{\partial\Omega } K \calI_h\Big[ \big( \sigma_h^n - \sigma_{\infty,h}^n \big) \hat Q_h\zeta \Big] \dH^{d-1}
    \\
    &\leq C \big(1 + \norm{\phi_h^n}_h + \norm{\sigma_h^n}_h \big)
    \norm{\hat Q_h \zeta}_h
    + n_1 \Big( \norm{\chi_\sigma \nabla\sigma_h^n}_{L^2} 
    +\norm{\chi_\phi \nabla\phi}_{L^2} \Big)
    \norm{\nabla \hat Q_h\zeta}_{L^2}
    \\
    &\quad
    + K \Big( \norm{\sigma_h^n}_{h,{\partial\Omega }} 
    + \norm{\sigma_{\infty,h}^n}_{h,{\partial\Omega }} \Big)
    \norm{\hat Q \zeta}_{h,{\partial\Omega }} 
    \\
    &\leq C \Big( 1 
    + \norm{\phi_h^n}_h + \norm{\sigma_h^n}_h 
    + \norm{\nabla \phi_h^n}_{L^2}
    + \norm{\nabla \sigma_h^n}_{L^2} 
    + \norm{\sigma_{\infty,h}^n}_{h,{\partial\Omega }} 
    + \norm{\sigma_h^n}_{h,{\partial\Omega }}\Big)
    \norm{\zeta}_{H^1},
\end{align*}
which yields
\begin{align}
\label{eq:bounds_higher_order_2}
\begin{split}
    \nnnorm{\frac{\sigma_h^n - \sigma_h^{n-1}}{\Delta t}}_{(H^1)'} 
    &\leq C \Big( 1 
    + \norm{\phi_h^n}_h + \norm{\sigma_h^n}_h 
    + \norm{\nabla \phi_h^n}_{L^2}
    + \norm{\nabla \sigma_h^n}_{L^2} 
    \\
    &\quad
    + \norm{\sigma_h^n}_{h,{\partial\Omega }} 
    + \norm{\sigma_{\infty,h}^n}_{h,{\partial\Omega }}\Big).
\end{split}
\end{align}

Hence, we obtain from \eqref{eq:bounds_higher_order_1}, \eqref{eq:bounds_higher_order_2} and \eqref{eq:energy_FE}, that
\begin{align*}
    &\Delta t \sum_{n=1}^{N_T} \Big(
    \nnnorm{\frac{\phi_h^n - \phi_h^{n-1}}{\Delta t}}_{(H^1)'}^2
    + \nnnorm{\frac{\sigma_h^n - \sigma_h^{n-1}}{\Delta t}}_{(H^1)'}^2
    \Big)
    \\
    &\leq C\Delta t \sum_{n=1}^{N_T} \Big( 1 
    + \norm{\phi_h^n}_h^2 
    + \norm{\sigma_h^n}_h^2 
    + \norm{\nabla \mu_h^n}_{L^2}^2 
    + \norm{\nabla \phi_h^n}_{L^2}^2
    + \norm{\nabla \sigma_h^n}_{L^2}^2
    \\
    &\quad
    + \norm{\sigma_h^n}_{h,{\partial\Omega }}^2 
    + \norm{\sigma_{\infty,h}^n}_{h,{\partial\Omega }}^2 \Big)
    \\
    &\leq C.
\end{align*}
This proves the second and third bounds in \eqref{eq:bounds_higher_order}.



Next, we test \eqref{eq:phi_FE} with $\zeta_h = \Delta t (\phi_h^{m+l} - \phi_h^m)$, where $m=0,...,N_T - l$ and $l=1,...,N_T$, to obtain
\begin{align*}
    0 &= \int_\Omega \calI_h \Big[ \Big(\phi_h^n-\phi_h^{n-1}
    - \Delta t \Gamma_{\phi,h}^n \Big)  (\phi_h^{m+l} - \phi_h^m) \Big]
    + \Delta t \calI_h[m(\phi_h^{n-1})] \nabla\mu_h^n \cdot \nabla  (\phi_h^{m+l} - \phi_h^m) \dx.
\end{align*}
Summing from $n=m+1, ... , m+l$ gives
\begin{align*}
    0 &= \int_\Omega \calI_h \Big[ \abs{\phi_h^{m+l} - \phi_h^m}^2 \Big] \dx
    - \Delta t \sum_{n=m+1}^{m+l} \int_\Omega  
    \calI_h\Big[ \Gamma_{\phi}(\phi_h^n, \sigma_h^n) (\phi_h^{m+l} - \phi_h^m)  \Big] \dx
    \\
    &\quad
    + \Delta t \sum_{n=m+1}^{m+l} \int_\Omega
    \calI_h[m(\phi_h^{n-1})] \nabla\mu_h^n \cdot \nabla  (\phi_h^{m+l} - \phi_h^m) \dx,
\end{align*}
which yields on noting \eqref{eq:norm_equiv}, \eqref{eq:energy_FE}, Hölder's inequality and the assumptions on $\Gamma_\phi(\cdot,\cdot)$ and $m(\cdot)$, that
\begin{align*}
    \norm{\phi_h^{m+l} - \phi_h^m}_{L^2}^2
    &\leq
    C \Delta t \sum_{n=m+1}^{m+l} 
    \Big( \norm{\Gamma_\phi(\phi_h^n, \sigma_h^n)}_{L^2} 
    + \norm{\nabla\mu_h^n}_{L^2} 
    \Big) \norm{\phi_h^{m+l} - \phi_h^m}_{H^1}
    \\
    &\leq
    C \Delta t \sum_{n=m+1}^{m+l} 
    \Big( 1 + \norm{\phi_h^n}_{L^2} 
    + \norm{\sigma_h^n}_{L^2} 
    + \norm{\nabla\mu_h^n}_{L^2}
    \Big) \norm{\phi_h^{m+l} - \phi_h^m}_{H^1}
    \\
    &\leq 
    C \Delta t \sum_{k=1}^{l} 
    \Big( 1 + \norm{\nabla\mu_h^{m+k}}_{L^2}
    \Big) \norm{\phi_h^{m+l} - \phi_h^m}_{H^1}.
\end{align*}
Multiplying both sides by $\Delta t$, summing from $m=0,...,N_T - l$ and applying a Hölder's inequality and the bounds \eqref{eq:energy_FE} leads to
\begin{align*}
    \Delta t \sum_{m=0}^{N_T - l}
    \norm{\phi_h^{m+l} - \phi_h^m}_{L^2}^2
    &\leq 
    C (\Delta t)^2 \sum_{k=1}^l \sum_{m=0}^{N_T - l} 
    \Big( 1 + \norm{\nabla\mu_h^{m+k}}_{L^2}
    \Big) \norm{\phi_h^{m+l} - \phi_h^m}_{H^1}
    \\
    &\leq C \Delta t  \sum_{k=1}^l
    \bigg( 1 + \Big( \Delta t \sum_{m=0}^{N_T - l} \norm{\nabla\mu_h^{m+k}}_{L^2}^2 \Big)^\frac{1}{2} \bigg)
    \cdot \Big( \Delta t \sum_{m=0}^{N_T - l} \norm{\phi_h^{m+l} - \phi_h^m}_{H^1}^2
    \Big)^\frac{1}{2}
    \\
    &\leq 
    C l \Delta t.
\end{align*}
%
%
This proves \eqref{eq:bounds_phi_translation}.
\end{proof}

\section{Convergence to a weak solution}

In this section, we will use compactness arguments and the bounds \eqref{eq:energy_FE}, \eqref{eq:bounds_higher_order} to show that solutions of the discrete scheme \eqref{eq:phi_FE}--\eqref{eq:sigma_FE} converge to a weak solution of \eqref{eq:phi}--\eqref{eq:sigma} when we pass to the limit $(h,\Delta t) \to (0,0)$.


For future reference, we recall the following compactness results from \cite[Sect.~8, Cor.~4 and Thm.~5]{simon_1986}.
Let $X, Y, Z$ be Banach spaces with a compact embedding $X \hookrightarrow \hookrightarrow Y$ and a continuous embedding $Y \hookrightarrow Z$. Let $1\leq p < \infty$ and $r>1$. Then we have the following compact embeddings:
\begin{subequations}
\begin{alignat}{3}
    \label{eq:compact_Lp}
    &\{\eta \in L^p(0,T;X): \ 
    &&\partial_t\eta \in L^1(0,T;Z) \}
    &&\hookrightarrow \hookrightarrow L^p(0,T;Y),
    \\
    \label{eq:compact_C}
    &\{\eta \in L^\infty(0,T;X) : \ 
    &&\partial_t\eta \in L^r(0,T;Z) \}
    &&\hookrightarrow \hookrightarrow  C([0,T];Y).
\end{alignat}
Moreover, let $F$ be a bounded subset in $L^p(0,T;X)$ with
\begin{align}
    \label{eq:compact_translation}
    \lim_{\theta\to 0} \norm{\eta(\cdot,\cdot+\theta) - \eta(\cdot,\cdot)}_{L^p(0,T-\theta;Z)} = 0 
    \quad
    \text{ uniformly for } \eta\in F.
\end{align}
Then $F$ is relatively compact in $L^p(0,T;Y)$ if $1\leq p < \infty$ and in $C([0,T];Y)$ if $p=\infty$, respectively.
\end{subequations}

\bigskip

Let us introduce the following notation 
for affine-linear and piecewise constant extensions of time-discrete functions $a^n(\cdot)$, $n=0,...,N_T$:
\begin{alignat}{2}
    \label{def:fun_Delta_t}
    a^{\Delta t}(\cdot, t) 
    &\coloneqq 
    \frac{t - t^{n-1}}{\Delta t} a^n(\cdot)
    + \frac{t^n - t}{\Delta t} a^{n-1}(\cdot)
    \quad\quad 
    && t\in [t^{n-1},t^n], \ n\in \{1,...,N_T\},
    \\
    \label{def:fun_Delta_t_pm}
    a^{\Delta t,+}(\cdot, t) 
    &\coloneqq a^n(\cdot),
    \quad\quad 
    a^{\Delta t,-}(\cdot, t) 
    \coloneqq a^{n-1}(\cdot)
    \quad\quad 
    && t\in (t^{n-1},t^n], \ n\in \{1,...,N_T\}.
\end{alignat}

Using this notation, we can reformulate the system \eqref{eq:phi_FE}--\eqref{eq:sigma_FE} continuously in time. Multiplying each equation in \eqref{eq:phi_FE}--\eqref{eq:sigma_FE} by $\Delta t$ and summing from $n=1,...,N_T$, we obtain for all test functions $(\zeta_h$, $\rho_h$, $ \xi_h) \in (L^2(0,T;\calS_h))^3$ that
\begin{subequations}
\begingroup
\allowdisplaybreaks
\begin{align}
    \label{eq:phi_FE_time}
    \int_0^T \int_\Omega \calI_h \Big[ \big(\partial_t \phi_h^{\Delta t}
    - \Gamma_{\phi,h}^{\Delta t,+} \big) \zeta_h \Big]
    + \calI_h[m(\phi_h^{\Delta t,-})] \nabla\mu_h^{\Delta t,+} \cdot \nabla \zeta_h \dx \dt
    = 0,
    \\
    \nonumber
    \label{eq:mu_FE_time}
    \int_0^T \int_\Omega \calI_h \Big[ \Big( - \mu_h^{\Delta t,+}  
    + A \psi_1'(\phi_h^{\Delta t,+}) 
    + A \psi_2'(\phi_h^{\Delta t,-}) 
    - \chi_\phi \sigma_h^{\Delta t,+} \Big) \rho_h \Big] \dx\dt
    \quad\quad
    \\
    + \int_0^T \int_\Omega B \nabla\phi_h^{\Delta t,+} \cdot \nabla\rho_h\dx \dt
    = 0,
    \\
    \nonumber
    \label{eq:sigma_FE_time}
    \int_0^T \int_\Omega \calI_h \Big[ \big( \partial_t \sigma_h^{\Delta t}
    + \Gamma_{\sigma,h}^{\Delta t,+} \big) \xi_h \Big]
    + \calI_h[n(\phi_h^{\Delta t,-})] \big(
    \chi_\sigma \nabla\sigma_h^{\Delta t,+} 
    - \chi_\phi \nabla\phi_h^{\Delta t,+} \big)
    \cdot \nabla \xi_h  \dx \dt
    \quad\quad
    \\
    + \int_0^T \int_{\partial\Omega } \calI_h\Big[ K \big(\sigma_h^{\Delta t,+} - \sigma_{\infty,h}^{\Delta t,+} \big) \xi_h \Big] \dH^{d-1} \dt
    = 0,
\end{align}
\endgroup
\end{subequations}
subject to the initial conditions $\phi_h^{\Delta t}(0) = \phi_h^0$, $\sigma_h^{\Delta t}(0) = \sigma_h^0$.

\begin{subequations}
Under the assumptions of Theorem \ref{theorem:existence_FE}, we can deduce from \eqref{eq:energy_FE}, \eqref{eq:bounds_higher_order}, \eqref{eq:bounds_phi_translation}, \eqref{eq:norm_equiv}, \eqref{eq:bounds_initial}, \eqref{eq:norm_equiv_Gamma}, \eqref{def:fun_Delta_t} and 
\eqref{def:fun_Delta_t_pm} that
\begin{align}
\label{eq:bounds_time}
\begin{split}
    &\quad \norm{\phi_h^{\Delta t(,\pm)}}_{L^\infty(0,T;H^1)}^2 
    + \norm{\Delta_h \phi_h^{\Delta t(,\pm)}}_{L^2(0,T;L^2)}^2 
    + \norm{\partial_t \phi_h^{\Delta t} }_{L^2(0,T;(H^1)')}^2
    \\
    &\quad + \norm{\sigma_h^{\Delta t(,\pm)}}_{L^\infty(0,T;L^2)}^2 
    + \norm{\sigma_h^{\Delta t(,\pm)}}_{L^2(0,T;H^1)}^2 
    + \norm{\sigma_h^{\Delta t(,\pm)}}_{L^2(0,T;L^2({\partial\Omega }))}^2 
    + \norm{\partial_t \sigma_h^{\Delta t} }_{L^2(0,T;(H^1)')}^2
    \\
    &\quad + \norm{\mu_h^{\Delta t,+}}_{L^2(0,T;H^1)}^2
    + \frac{1}{\Delta t} \norm{\phi_h^{\Delta t} - \phi_h^{\Delta t,\pm}}_{L^2(0,T;H^1)}^2
    + \frac{1}{\Delta t} \norm{\sigma_h^{\Delta t} - \sigma_h^{\Delta t,\pm}}_{L^2(0,T;L^2)}^2
    \\
    &\leq C,
\end{split}
\end{align}
and for any $l\in\{1,...,N_T\}$,
\begin{align}
    \label{eq:bounds_time_phi_translation}
    \int_0^{T- l \Delta t} 
    \nnorm{\phi_h^{\Delta t(,\pm)}(t+ l \Delta t) 
    - \phi_h^{\Delta t(,\pm)}(t) }_{L^2}^2 \dt
    &\leq C  l\Delta t,
\end{align}
with constants $C>0$ that are independent of $h,\Delta t$.
\end{subequations}

In the following step, we show that there exists a subsequence of $\big(\phi_h^{\Delta t(,\pm)}$, $\mu_h^{\Delta t(,\pm)}$, $\sigma_h^{\Delta t(,\pm)}\big)_{h,\Delta t>0}$ that converges to some limit functions $(\phi,\mu,\sigma)$ as $(h,\Delta t)\to (0,0)$.

\begin{lemma}
\label{lemma:convergence}
Let the assumptions of Theorem \ref{theorem:existence_FE} hold. Then there exist a subsequence of $\big(\phi_h^{\Delta t(,\pm)}$, $\mu_h^{\Delta t(,\pm)}$, $\sigma_h^{\Delta t(,\pm)}\big)_{h,\Delta t>0}$ and functions $\phi, \mu, \sigma$ satisfying
\begin{subequations}
\label{eq:conv_phi_mu_sigma}
\begin{align}
    \phi &\in L^\infty(0,T; H^1) 
    \cap L^2(0,T;H^2)
    \cap H^1(0,T; (H^1)'),
    \\
    \mu &\in L^2(0,T; H^1),
    \\
    \sigma &\in L^\infty(0,T; L^2) \cap L^2(0,T; H^1) \cap H^1(0,T; (H^1)'),
\end{align}
\end{subequations}
with $\phi(0) = \phi_0$ and $\sigma(0) = \sigma_0$ in $L^2(\Omega)$, such that, as $(h,\Delta t)\to (0,0)$,
\begin{subequations}
\begin{alignat}{3}
    \label{eq:conv_phi_Linf_H1}
    \phi_h^{\Delta t(,\pm)} &\to \phi \quad &&\text{ weakly-$*$ } \quad && \text{ in } L^\infty(0,T;H^1),
    \\
    \label{eq:conv_dtphi_L2_H1'}
    \partial_t \phi_h^{\Delta t} &\to \partial_t\phi \quad
    &&\text{ weakly } \quad && \text{ in } L^2(0,T;(H^1)'),
    \\
    \label{eq:conv_phi_L2_H2}
    \Delta_h \phi_h^{\Delta t(,\pm)} &\to \Delta \phi \quad &&\text{ weakly } \quad && \text{ in } L^2(0,T;L^2),
    \\
    \label{eq:conv_phi_L2_W1s}
    \phi_h^{\Delta t(,\pm)} &\to \phi \quad &&\text{ weakly } \quad && \text{ in } L^2(0,T;W^{1,s}),
    \\
    \label{eq:conv_phi_strong}
    \phi_h^{\Delta t(,\pm)} &\to \phi \quad &&\text{ strongly } \quad && \text{ in } L^2(0,T;C^{0,\alpha}(\overline\Omega)),
    \\
    \label{eq:conv_mu_L2_H1}
    \mu_h^{\Delta t,+} &\to \mu \quad &&\text{ weakly } \quad && \text{ in } L^2(0,T;H^1),
    \\
    \label{eq:conv_sigma_Linf_L2}
    \sigma_h^{\Delta t(,\pm)} &\to \sigma \quad &&\text{ weakly-$*$ } \quad && \text{ in } L^\infty(0,T; L^2),
    \\
    \label{eq:conv_sigma_L2_H1}
    \sigma_h^{\Delta t(,\pm)} &\to \sigma \quad &&\text{ weakly } \quad && \text{ in }  L^2(0,T;H^1),
    \\
    \label{eq:conv_dtsigma_L2_H1'}
    \partial_t \sigma_h^{\Delta t} &\to \partial_t\sigma \quad
    &&\text{ weakly } \quad && \text{ in } L^2(0,T;(H^1)'),
    \\
    \label{eq:conv_sigma_strong}
    \sigma_h^{\Delta t(,\pm)} &\to \sigma \quad &&\text{ strongly } \quad && 
    \text{ in } L^2(0,T; L^p),
\end{alignat}
where $s\in[2,\infty)$, $\alpha \in [0,1)$, $p\in[1,\infty)$ if $d\in\{1,2\}$, and $s\in[2,6)$, $\alpha\in[0,\frac{1}{2})$, $p\in [1,6)$ if $d=3$, respectively. 
%
%
%
%
\end{subequations}
\end{lemma}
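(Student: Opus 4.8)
The plan is to derive every convergence in \eqref{eq:conv_phi_Linf_H1}--\eqref{eq:conv_sigma_strong} from the uniform bounds \eqref{eq:bounds_time} and \eqref{eq:bounds_time_phi_translation} by weak-compactness together with Aubin--Lions--Simon and translation-type arguments, extracting a single subsequence by a diagonal procedure. First I would harvest all the \emph{weak} and \emph{weak-$*$} limits: since each norm in \eqref{eq:bounds_time} is bounded uniformly in $(h,\Delta t)$ and the ambient spaces are reflexive or duals of separable spaces, Banach--Alaoglu gives \eqref{eq:conv_phi_Linf_H1} from the $L^\infty(0,T;H^1)$ bound on $\phi_h^{\Delta t(,\pm)}$, \eqref{eq:conv_dtphi_L2_H1'} from the $L^2(0,T;(H^1)')$ bound on $\partial_t\phi_h^{\Delta t}$, a weak limit of $\Delta_h\phi_h^{\Delta t(,\pm)}$ in $L^2(0,T;L^2)$, and, on combining the discrete-Laplacian bound with \eqref{eq:discr_laplace_bound}, the $W^{1,s}$ convergence \eqref{eq:conv_phi_L2_W1s}. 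The limits \eqref{eq:conv_mu_L2_H1}, \eqref{eq:conv_sigma_Linf_L2}, \eqref{eq:conv_sigma_L2_H1} and \eqref{eq:conv_dtsigma_L2_H1'} follow in exactly the same manner.

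For the \emph{strong} convergences I would treat the affine-linear interpolants via the embedding \eqref{eq:compact_Lp}. For $\sigma$ the chain $H^1\hookrightarrow\hookrightarrow L^p\hookrightarrow(H^1)'$ together with the bounds on $\sigma_h^{\Delta t}$ in $L^2(0,T;H^1)$ and on $\partial_t\sigma_h^{\Delta t}$ in $L^2(0,T;(H^1)')$ yields strong convergence of $\sigma_h^{\Delta t}$ in $L^2(0,T;L^p)$, i.e. \eqref{eq:conv_sigma_strong}. For $\phi$, using \eqref{eq:discr_laplace_bound} to bound $\phi_h^{\Delta t}$ in $L^2(0,T;W^{1,s})$, the compact Morrey embedding $W^{1,s}\hookrightarrow\hookrightarrow C^{0,\alpha}\hookrightarrow(H^1)'$ combined with the $(H^1)'$ bound on $\partial_t\phi_h^{\Delta t}$ gives \eqref{eq:conv_phi_strong} for the affine version. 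To cover the piecewise-constant interpolants $\phi_h^{\Delta t,\pm}$, which carry no weak time derivative, I would instead invoke the translation criterion \eqref{eq:compact_translation} with $X=W^{1,s}$, $Y=C^{0,\alpha}$ and $Z=L^2$: the estimate \eqref{eq:bounds_time_phi_translation} supplies precisely the uniform smallness of time-shifts (general shifts $\theta$ being reduced to integer multiples of $\Delta t$ for step functions), yielding relative compactness of $\phi_h^{\Delta t,\pm}$ in $L^2(0,T;C^{0,\alpha})$. The piecewise-constant $\sigma_h^{\Delta t,\pm}$ I would handle by transferring the $L^2(0,T;L^p)$ limit of the affine version, using $\norm{\sigma_h^{\Delta t}-\sigma_h^{\Delta t,\pm}}_{L^2(0,T;L^2)}^2\leq C\Delta t$ from \eqref{eq:bounds_time} and a Gagliardo--Nirenberg interpolation between $L^2$ and $H^1$.

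It then remains to identify the limits. The bounds on $\tfrac{1}{\Delta t}\norm{\phi_h^{\Delta t}-\phi_h^{\Delta t,\pm}}^2$ and its $\sigma$-analogue in \eqref{eq:bounds_time} force the three interpolants to share a common limit, so the labels $\phi_h^{\Delta t(,\pm)}$ may be used interchangeably. To establish \eqref{eq:conv_phi_L2_H2} I would pass to the limit in the defining relation \eqref{eq:discr_laplace}, namely $\int_\Omega\calI_h[\Delta_h\phi_h^{\Delta t,+}\zeta_h]\dx=-\int_\Omega\nabla\phi_h^{\Delta t,+}\cdot\nabla\zeta_h\dx$ tested against $\zeta_h=\calI_h\zeta$ for smooth $\zeta$: using the mass-lumping error \eqref{eq:lump_Sh_Sh}, the interpolation convergence \eqref{eq:interp_continuous} and the weak limit \eqref{eq:conv_phi_L2_W1s}, the weak limit of $\Delta_h\phi_h^{\Delta t,+}$ is identified with the distributional Laplacian of $\phi$ subject to homogeneous Neumann data; convexity of $\Omega$ and elliptic regularity then upgrade $\Delta\phi\in L^2$ to $\phi\in L^2(0,T;H^2)$. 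Finally the initial conditions $\phi(0)=\phi_0$ and $\sigma(0)=\sigma_0$ in $L^2(\Omega)$ follow from the continuous-in-time embedding \eqref{eq:compact_C}, which places $\phi$ and $\sigma$ in $C([0,T];\cdot)$, together with $\phi_h^0=\calI_h\phi_0\to\phi_0$ and $\sigma_h^0=\calI_h^{Cl}\sigma_0\to\sigma_0$ strongly in $L^2$ by \eqref{eq:interp_H2} and \eqref{eq:clement_error}.

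I expect the genuinely delicate step to be the identification of the discrete-Laplacian weak limit with the true Laplacian and the ensuing $H^2$-regularity, since it requires controlling the mass-lumping quadrature error uniformly, ensuring the natural Neumann boundary condition survives in the limit, and invoking convex-domain elliptic regularity; the weak-compactness extractions and the Aubin--Lions--Simon strong convergences are, by contrast, routine once the bounds \eqref{eq:bounds_time} and \eqref{eq:bounds_time_phi_translation} are in hand.
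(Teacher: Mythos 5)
Your proposal is correct and follows essentially the same route as the paper: weak/weak-$*$ compactness from the bounds \eqref{eq:bounds_time}, the Aubin--Lions--Simon embeddings \eqref{eq:compact_Lp} for the strong convergence of the affine interpolants, the translation criterion \eqref{eq:compact_translation} with \eqref{eq:bounds_time_phi_translation} for the piecewise-constant ones, and \eqref{eq:discr_laplace_bound} plus convex-domain elliptic regularity for the $W^{1,s}$ and $H^2$ statements. The only (harmless) deviation is the identification of the initial data, which the paper does by integration by parts in time against test functions vanishing at $t=T$, whereas you read it off from the $C([0,T];Y)$ compactness; both are standard and valid.
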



\begin{proof}

It follows from \eqref{eq:bounds_time} that
\begin{align}
\label{eq:convergence_1}
    \norm{\phi_h^{\Delta t} - \phi_h^{\Delta t,\pm} }_{L^2(0,T;H^1)}^2
    + \norm{\sigma_h^{\Delta t} - \sigma_h^{\Delta t, \pm}}_{L^2(0,T;L^2)}^2
    \leq C \Delta t \to 0,
\end{align}
as $\Delta t\to 0$. 
Therefore, on noting \eqref{eq:bounds_time} and \eqref{eq:convergence_1}, we can choose a subsequence of $\big(\phi_h^{\Delta t(,\pm)}$, $\mu_h^{\Delta t(,\pm)}$, $\sigma_h^{\Delta t(,\pm)}\big)_{h,\Delta t>0}$ such that there exist limit functions 
\begin{align*}
    \phi &\in L^\infty(0,T; H^1) 
    \cap H^1(0,T; (H^1)'),
    \\
    \mu &\in L^2(0,T; H^1),
    \\
    \sigma &\in L^\infty(0,T; L^2) \cap L^2(0,T; H^1) \cap H^1(0,T; (H^1)'),
\end{align*}
such that the convergence results \eqref{eq:conv_phi_Linf_H1}, \eqref{eq:conv_dtphi_L2_H1'}, \eqref{eq:conv_mu_L2_H1}, \eqref{eq:conv_sigma_Linf_L2}, \eqref{eq:conv_sigma_L2_H1} and \eqref{eq:conv_dtsigma_L2_H1'} hold.

Let $\zeta\in C^1([0,T];H^1)$ with $\zeta(T) = 0$. By integration by parts in time, we obtain
\begin{align*}
    \sskp{\phi_h^0}{\zeta(0)}_{L^2}
    &= \sskp{\phi_h^{\Delta t}(0)}{\zeta(0)}_{L^2}
    =- \int_0^T \sskp{\partial_t \phi_h^{\Delta t}}{\zeta}_{L^2} \dt 
    - \int_0^T \sskp{\phi_h^{\Delta t}}{\partial_t \zeta}_{L^2} \dt.
\end{align*}
On noting \eqref{eq:def_initial} and \eqref{eq:interp_H2}, the term on the left-hand side converges to $\skp{\phi_0}{\zeta(0)}_{L^2}$ as $h \to 0$. It follows from \eqref{eq:conv_dtphi_L2_H1'} and \eqref{eq:conv_phi_Linf_H1} that the terms on the right-hand side converge to 
\begin{align*}
- \int_0^T \dualp{\partial_t\phi}{\zeta} \dt 
-  \int_0^T \sskp{\phi}{\partial_t\zeta}_{L^2} \dt 
= \sskp{\phi(0)}{\zeta(0)}_{L^2},
\end{align*}
as $(h,\Delta t)\to (0,0)$, where $\dualp{\cdot}{\cdot}$ denotes the duality pairing between $H^1(\Omega)$ and its dual space.
The last equality is a consequence of the continuous embedding 
\begin{align*}
    L^2(0,T;H^1) \cap H^1(0,T; (H^1)') \hookrightarrow C([0,T];L^2),
\end{align*}
from, e.g., \cite[Thm. 25.5]{wloka_1987}, and integration by parts in time. 
Hence, the initial conditions for $\phi$ are satisfied. The result for $\sigma$ can be established analogously.

We can deduce from \eqref{eq:bounds_initial}, \eqref{eq:bounds_time} and \eqref{eq:norm_equiv} that
\begin{align}
\label{eq:convergence_2}
    \norm{\Delta_h \phi_h^{\Delta t(,\pm)}}_{L^2(0,T;L^2)} 
    \leq C.
\end{align}
The result \eqref{eq:conv_phi_L2_H2} follows analogously to \cite[Lemma 3.1]{barrett_nurnberg_styles_2004}. Together with elliptic regularity, as $\Omega$ is a convex, polygonal domain, we obtain additionally that $\phi\in L^2(0,T;H^2)$.

We can establish \eqref{eq:conv_sigma_strong} for a subsequence of $\sigma_h^{\Delta t}$ on noting \eqref{eq:conv_dtsigma_L2_H1'}, \eqref{eq:conv_sigma_L2_H1} and \eqref{eq:compact_Lp}, as the embedding $H^1(\Omega) \hookrightarrow \hookrightarrow L^p(\Omega)$ is compact.
Combining this with \eqref{eq:convergence_1}, \eqref{eq:bounds_time} and a Gagliardo-Nirenberg inequality yields the result \eqref{eq:conv_sigma_strong} for a subsequence of $\sigma_h^{\Delta t,\pm}$.

On extracting a further subsequence, it follows from \eqref{eq:conv_phi_L2_H2} and \eqref{eq:discr_laplace_bound} that \eqref{eq:conv_phi_L2_W1s} holds.
The strong convergence of a subsequence of $\phi_h^{\Delta t}$ to $\phi$ in $L^2(0,T; C^{0,\alpha}(\overline\Omega))$, as $(h,\Delta t)\to (0,0)$, is a consequence of \eqref{eq:conv_dtphi_L2_H1'}, \eqref{eq:conv_phi_L2_W1s} and \eqref{eq:compact_C}, as the embedding $W^{1,s}(\Omega) \hookrightarrow \hookrightarrow C^{0,\alpha}(\overline\Omega)$ is compact. 
Moreover, we obtain from \eqref{eq:bounds_time_phi_translation}, \eqref{eq:conv_phi_L2_W1s} and \eqref{eq:compact_translation} that $\phi_h^{\Delta t,\pm} \to \phi$ strongly in $L^2(0,T;C^{0,\alpha}(\overline\Omega))$,
as $(h,\Delta t)\to (0,0)$, as the embedding $W^{1,s}(\Omega) \hookrightarrow \hookrightarrow C^{0,\alpha}(\overline\Omega)$ is compact. This yields \eqref{eq:conv_phi_strong}.
\end{proof}


For our main result, we will need the following lemma which is a slightly modified version of \cite[Lemma 6.8]{barrett_2018_fene-p}.

\begin{lemma}
Assume that $g\in C^{0,1}(\R^n,\R)$ with Lipschitz constant $L_g$ and $n\in\N$. Then it holds for all $K \in \calT_h$ and $\pmb q_h\in (\calS_h)^n$, that
\begin{align}
\label{eq:interp_Lipschitz}
    \nnorm{\calI_h\big[ g(\pmb q_h) \big] - g(\pmb q_h) }_{L^2(K)}^2
    &\leq C L_g^2 h^2 \Big( \norm{\nabla \pmb q_h}_{L^2(K)}^2 \Big).
\end{align}
\end{lemma}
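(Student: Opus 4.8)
The plan is to localise everything to a single simplex $K\in\calT_h$ and to exploit that $\pmb q_h$ is affine there, so that the composition $g(\pmb q_h)$ is genuinely Lipschitz and an elementary nodal-interpolation argument applies \emph{without} invoking any second-order regularity. First I would record that, since each component of $\pmb q_h\in(\calS_h)^n$ is continuous and piecewise linear, the gradient $\nabla\pmb q_h$ is constant on $K$; denoting this constant value by $\pmb G_K$, the affine structure gives $\abs{\pmb q_h(x)-\pmb q_h(y)}\leq\abs{\pmb G_K}\,\abs{x-y}$ for all $x,y\in K$ (the Euclidean norm $\abs{\pmb G_K}$ being exactly the pointwise value of $\abs{\nabla\pmb q_h}$ on $K$). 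Combining this with the Lipschitz continuity of $g$, the function $v\coloneqq g(\pmb q_h)$ satisfies $\abs{v(x)-v(y)}\leq L_g\abs{\pmb G_K}\,\abs{x-y}$ on $K$, and in particular $v\in C(\overline\Omega)$, so its nodal interpolant is well defined.

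Next I would use the explicit barycentric form of $\calI_h$. Writing $x\in K$ in barycentric coordinates $\lambda_0,\dots,\lambda_d\geq0$ with $\sum_{i=0}^d\lambda_i=1$ relative to the vertices $\{P_i^k\}_{i=0}^d$ of $K$, one has $\calI_h v(x)=\sum_{i=0}^d\lambda_i v(P_i^k)$, whence
\begin{align*}
    v(x)-\calI_h v(x)
    = \sum_{i=0}^d \lambda_i\big(v(x)-v(P_i^k)\big).
\end{align*}
Taking absolute values, applying the Lipschitz bound from the first step together with $\abs{x-P_i^k}\leq h_K\coloneqq\diam K$ and $\sum_i\lambda_i=1$, yields the pointwise estimate $\abs{v(x)-\calI_h v(x)}\leq L_g\abs{\pmb G_K}\,h_K$ for every $x\in K$.

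Finally I would integrate this bound over $K$. As its right-hand side is constant, $\norm{v-\calI_h v}_{L^2(K)}^2\leq \abs{K}\,L_g^2\abs{\pmb G_K}^2 h_K^2$, and because $\nabla\pmb q_h\equiv\pmb G_K$ is constant on $K$ we have $\abs{K}\abs{\pmb G_K}^2=\norm{\nabla\pmb q_h}_{L^2(K)}^2$; using $h_K\leq h$ then gives \eqref{eq:interp_Lipschitz} with a constant independent of $h$. The only point requiring care — the main obstacle — is that $g$ is merely Lipschitz, so the usual interpolation estimates via Taylor expansion or the Bramble--Hilbert lemma, which would require $W^{2,p}$ regularity of $v$, are unavailable. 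The barycentric representation circumvents this by using only first-order (Lipschitz) information, which is precisely what makes the error scale with $\norm{\nabla\pmb q_h}_{L^2(K)}$ rather than with a second-order seminorm, and it is the reason one gains only the factor $h$ (not $h^2$) here.
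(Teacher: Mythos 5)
Your proof is correct and complete: the barycentric-coordinate representation of the nodal interpolant, combined with the fact that $\pmb q_h$ is affine on each simplex so that $g(\pmb q_h)$ is Lipschitz on $K$ with constant $L_g\abs{\nabla\pmb q_h|_K}$, gives exactly the claimed bound (in fact with $C=1$). The paper itself offers no proof of this lemma --- it is stated as a slight modification of a result cited from the literature --- and your argument is precisely the standard one that underlies that reference, correctly avoiding any appeal to second-order regularity of $g(\pmb q_h)$.
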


Now we pass to the limit in the system \eqref{eq:phi_FE}--\eqref{eq:sigma_FE}.

\begin{theorem}[Convergence]
\label{theorem:convergence} 
Let the assumptions of Lemma \ref{lemma:convergence} hold. Additionally, assume that $\Gamma_\phi, \Gamma_\sigma \in C^{0,1}(\R^2,\R)$ with Lipschitz constants $L_{\Gamma_\phi}$ and $L_{\Gamma_\sigma}$, respectively. Then, the functions $\phi,\mu,\sigma$ from Lemma \ref{lemma:convergence} satisfy for all $\zeta,\rho,\xi\in L^2(0,T;H^1)$
\begin{subequations}
\begin{align}
    \label{eq:phi_weak}
    \int_0^T \dualp{\partial_t\phi}{\zeta} \dt
    &= \int_0^T \int_\Omega -m(\phi)\nabla\mu\cdot\nabla\zeta 
    + \Gamma_\phi(\phi,\sigma) \zeta \dx\dt, 
    \\
    \label{eq:mu_weak}
    \int_0^T \int_\Omega \mu\rho \dx \dt
    &= \int_0^T \int_\Omega 
    A \psi^\prime(\phi)\rho 
    + B\nabla\phi\cdot\nabla\rho 
    - \chi_\phi\sigma\rho \dx \dt, 
    \\
    \label{eq:sigma_weak}
    \begin{split}
    \int_0^T \dualp{\partial_t\sigma}{\xi} \dt
    &= \int_0^T \int_\Omega -n(\phi)(\chi_\sigma\nabla\sigma 
    - \chi_\phi\nabla\phi) \cdot\nabla\xi 
    - \Gamma_\sigma(\phi,\sigma)\xi \dx \dt
    \\
    &\quad
    + \int_0^T \int_{\partial\Omega } K(\sigma_\infty - \sigma)\xi \dH^{d-1} \dt,
    \end{split}
\end{align}
\end{subequations}
and $\phi(0)=\phi_0$, $\sigma(0)=\sigma_0$ in $L^2(\Omega)$, where $\dualp{\cdot}{\cdot}$ denotes the duality pairing between $H^1(\Omega)$ and its dual space.
\end{theorem}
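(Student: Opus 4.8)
The plan is to pass to the limit $(h,\Delta t)\to(0,0)$ in the time-continuous reformulation \eqref{eq:phi_FE_time}--\eqref{eq:sigma_FE_time} along the subsequence furnished by Lemma \ref{lemma:convergence}. Since each of \eqref{eq:phi_FE_time}--\eqref{eq:sigma_FE_time} is linear in its test function and every term of the target identities \eqref{eq:phi_weak}--\eqref{eq:sigma_weak} is a continuous linear functional of $(\zeta,\rho,\xi)\in(L^2(0,T;H^1))^3$, it suffices to establish \eqref{eq:phi_weak}--\eqref{eq:sigma_weak} for a dense class of test functions and then conclude by density. Concretely, I would take $\zeta,\rho,\xi\in C^1([0,T];H^1)$ and insert the discrete test functions $\zeta_h=\calI_h^{Cl}\zeta$, $\rho_h=\calI_h^{Cl}\rho$, $\xi_h=\calI_h^{Cl}\xi$ (interpolated pointwise in time), which converge strongly to $\zeta,\rho,\xi$ in $L^2(0,T;H^1)$ by \eqref{eq:clement_conv}. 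The proof then reduces to passing to the limit term by term, separating in each term a \emph{mass-lumping error} (controlled by \eqref{eq:lump_Sh_Sh}, \eqref{eq:lump_Gamma_Sh_Sh}, \eqref{eq:interp_Lipschitz}, \eqref{eq:interp_continuous}) from a genuine \emph{weak/strong convergence} step handled via Lemma \ref{lemma:convergence}.

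For the discrete time derivatives I would integrate by parts in time rather than pairing $\partial_t\phi_h^{\Delta t}$ directly against $\zeta_h$, since no uniform $H^1$-bound on $\partial_t\phi_h^{\Delta t}$ is available to kill the lumping error in $\skp{\partial_t\phi_h^{\Delta t}}{\zeta_h}_h$. Choosing $\zeta$ additionally with $\zeta(T)=0$ (still dense in $L^2(0,T;H^1)$), I would rewrite $\int_0^T\skp{\partial_t\phi_h^{\Delta t}}{\zeta_h}_h\dt=-\skp{\phi_h^0}{\zeta_h(0)}_h-\int_0^T\skp{\phi_h^{\Delta t}}{\partial_t\zeta_h}_h\dt$. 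Now both factors in each lumped product are bounded in $H^1$, so the lumping errors vanish at rate $h^2$ by \eqref{eq:lump_Sh_Sh}; the remaining $L^2$-pairings pass to the limit using $\phi_h^0\to\phi_0$, the strong convergence of $\phi_h^{\Delta t}$ (from \eqref{eq:conv_phi_strong} together with \eqref{eq:convergence_1}) and the strong convergence of $\zeta_h,\partial_t\zeta_h$. Undoing the integration by parts in the limit with $\phi\in C([0,T];L^2)$ and $\phi(0)=\phi_0$ returns $\int_0^T\dualp{\partial_t\phi}{\zeta}\dt$, as required; the $\sigma$-time-derivative is treated identically using \eqref{eq:conv_sigma_strong} and \eqref{eq:conv_dtsigma_L2_H1'}.

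The purely linear spatial terms are routine weak$\times$strong limits: in \eqref{eq:mu_FE_time} the gradient term $B\int\nabla\phi_h^{\Delta t,+}\cdot\nabla\rho_h$ passes using $\nabla\phi_h^{\Delta t,+}\rightharpoonup\nabla\phi$ from \eqref{eq:conv_phi_Linf_H1} against $\nabla\rho_h\to\nabla\rho$, and the mass term $\skp{\mu_h^{\Delta t,+}}{\rho_h}_h$ converges to $\skp{\mu}{\rho}_{L^2}$ because $\mu_h^{\Delta t,+}\rightharpoonup\mu$ in $L^2(0,T;H^1)$ by \eqref{eq:conv_mu_L2_H1} and the lumping error is now $\leq Ch^2\norm{\mu_h^{\Delta t,+}}_{L^2(0,T;H^1)}\norm{\rho_h}_{L^2(0,T;H^1)}\to0$. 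The potential, chemotaxis and source contributions $A\psi_1'(\phi_h^{\Delta t,+})+A\psi_2'(\phi_h^{\Delta t,-})$, $\chi_\phi\sigma_h^{\Delta t,+}$, $\Gamma_{\phi,h}^{\Delta t,+}$, $\Gamma_{\sigma,h}^{\Delta t,+}$ all converge strongly in $L^2(0,T;L^2)$ by the strong convergences \eqref{eq:conv_phi_strong}, \eqref{eq:conv_sigma_strong} together with the Lipschitz continuity of $\psi_1',\psi_2',\Gamma_\phi,\Gamma_\sigma$; here the interpolation operator is removed by \eqref{eq:interp_Lipschitz}, and I would note that since $\phi_h^{\Delta t,+}$ and $\phi_h^{\Delta t,-}$ share the limit $\phi$, the convex--concave split recombines as $\psi_1'(\phi)+\psi_2'(\phi)=\psi'(\phi)$ to give the $A\psi'(\phi)\rho$ term of \eqref{eq:mu_weak}. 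The boundary integrals in \eqref{eq:sigma_FE_time} are handled with \eqref{eq:lump_Gamma_Sh_Sh}, the weak trace convergence of $\sigma_h^{\Delta t,+}$ in $L^2(0,T;L^2(\partial\Omega))$ against $\xi_h\to\xi$, and the convergence $\sigma_{\infty,h}^{\Delta t,+}\to\sigma_\infty$ in $L^2(0,T;L^2(\partial\Omega))$, which follows from \eqref{eq:def_bc} and \eqref{eq:clement_Gamma}.

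I expect the main obstacle to be the nonlinear mobility terms $\int_0^T\int_\Omega\calI_h[m(\phi_h^{\Delta t,-})]\nabla\mu_h^{\Delta t,+}\cdot\nabla\zeta_h\dx\dt$ and $\int_0^T\int_\Omega\calI_h[n(\phi_h^{\Delta t,-})](\chi_\sigma\nabla\sigma_h^{\Delta t,+}-\chi_\phi\nabla\phi_h^{\Delta t,+})\cdot\nabla\xi_h\dx\dt$, where the gradient factors converge only weakly while the mobilities $m,n$ are merely continuous (assumption $(A3)$), so that \eqref{eq:interp_Lipschitz} does not apply to them. The key is to upgrade the interpolated coefficient to a strong limit: using the strong convergence $\phi_h^{\Delta t,-}\to\phi$ in $L^2(0,T;C^{0,\alpha}(\overline\Omega))$ from \eqref{eq:conv_phi_strong} (which is precisely why the higher-order estimates of Lemma \ref{lemma:bounds_higher_order} were needed in dimensions $d=2,3$), the uniform continuity of $m$ on the relevant compact range, and the $L^\infty$-stability of $\calI_h$ together with \eqref{eq:interp_continuous}, one obtains $\calI_h[m(\phi_h^{\Delta t,-})]\to m(\phi)$ strongly and boundedly. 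Multiplying by the strongly convergent $\nabla\zeta_h$ produces a strong limit $m(\phi)\nabla\zeta$ against which the weakly convergent $\nabla\mu_h^{\Delta t,+}$ may be paired, yielding $\int_0^T\int_\Omega m(\phi)\nabla\mu\cdot\nabla\zeta$; the cross term is identical using \eqref{eq:conv_sigma_L2_H1} and \eqref{eq:conv_phi_L2_W1s}. Finally, the initial conditions $\phi(0)=\phi_0$, $\sigma(0)=\sigma_0$ were already secured in Lemma \ref{lemma:convergence}, so collecting the limits establishes \eqref{eq:phi_weak}--\eqref{eq:sigma_weak}.
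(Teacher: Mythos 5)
Your proposal is correct and follows essentially the same route as the paper's proof: Clément-interpolated test functions that are smooth in time, term-by-term limit passage separating mass-lumping and interpolation errors (via \eqref{eq:lump_Sh_Sh}, \eqref{eq:lump_Gamma_Sh_Sh}, \eqref{eq:interp_Lipschitz}) from genuine weak/strong convergences, a dominated-convergence argument to upgrade $\calI_h[m(\phi_h^{\Delta t,-})]$ to a strong limit paired against the weakly convergent gradients, and Lipschitz continuity plus the strong convergences of Lemma \ref{lemma:convergence} for the potential and source terms. The only cosmetic difference is that you make the integration by parts in time for the $\partial_t$-lumping error fully explicit, which the paper does implicitly in its first error term.
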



\begin{proof}
Let $\zeta\in C_0^\infty(0,T;H^1)$. We then define $\zeta_h \coloneqq \calI_h^{Cl} \zeta \in C_0^\infty(0,T;\calS_h)$. It holds for the first term in \eqref{eq:phi_FE_time} that
\begin{align*}
    \aaabs{\int_0^T  \sskp{\partial_t \phi_h^{\Delta t}}{\zeta_h}_h 
    - \dualp{\partial_t \phi}{\zeta} \dt }
    &\leq
    \aaabs{\int_0^T \sskp{\phi_h^{\Delta t}}{ \partial_t \zeta_h}_{h} 
    - \sskp{\phi_h^{\Delta t}}{\partial_t \zeta_h}_{L^2} \dt}
    \\
    &\quad
    + \aaabs{\int_0^T \sskp{\partial_t \phi_h^{\Delta t}}{ \zeta_h - \zeta}_{L^2}  \dt}
    \\
    &\quad
    + \aaabs{\int_0^T \dualp{\partial_t \phi_h^{\Delta t} 
    - \partial_t \phi}{\zeta} \dt}.
\end{align*}
The first and the second terms on the right-hand side vanish as $(h,\Delta t)\to (0,0)$ by using \eqref{eq:lump_Sh_Sh}, \eqref{eq:clement_error}, \eqref{eq:clement_conv} and \eqref{eq:bounds_time}. The last term converges to $0$ as $(h,\Delta t)\to (0,0)$ on noting \eqref{eq:conv_dtphi_L2_H1'}.

For the boundary integrals in \eqref{eq:sigma_FE_time}, we have
\begin{align*}
    \aaabs{\int_0^T  \sskp{\sigma_h^{\Delta t,+}}{\zeta_h}_{h,{\partial\Omega }} - \sskp{\sigma}{\zeta}_{L^2(\partial\Omega )}\dt }
    &\leq
    \aaabs{\int_0^T  \sskp{\sigma_h^{\Delta t,+}}{\zeta_h}_{h,{\partial\Omega }}
    -   \sskp{\sigma_h^{\Delta t,+}}{\zeta_h}_{L^2(\partial\Omega )} \dt}
    \\
    &\quad
    + \aaabs{\int_0^T  \sskp{\sigma_h^{\Delta t,+}}{\zeta_h - \zeta}_{L^2(\partial\Omega )}\dt }
    \\
    &\quad
    + \aaabs{ \int_0^T  \sskp{\sigma_h^{\Delta t,+} - \sigma}{\zeta}_{L^2(\partial\Omega )} \dt }
\end{align*}
On noting \eqref{eq:lump_Gamma_Sh_Sh}, \eqref{eq:clement_error} and \eqref{eq:bounds_time}, we obtain that the first term on the right-hand side can be bounded by $C h \norm{\sigma_h^{\Delta t,+}}_{L^2(0,T;H^1)} \norm{\zeta}_{L^2(0,T;H^1)} \to 0$ in the limit $(h,\Delta t)\to (0,0)$. The second and the third term on the right-hand side vanish as $(h,\Delta t)\to (0,0)$ on noting \eqref{eq:clement_Gamma}, \eqref{eq:bounds_time}, \eqref{eq:conv_sigma_L2_H1} and the continuity of the trace operator.

The passage to the limit in the remaining linear terms in \eqref{eq:phi_FE_time}--\eqref{eq:sigma_FE_time} can be established similarly.


\bigskip

Now we show convergence of the nonlinear terms. 
By the continuity of $m(\cdot)$, \eqref{eq:conv_phi_strong} and \eqref{eq:interp_continuous}, we have for almost all $t\in(0,T)$ that
\begin{align*}
    \nnorm{ \calI_h\big[ m(\phi_h^{\Delta t,-}) \big] - m(\phi)}_{L^\infty}
    &\leq 
    \nnorm{ \calI_h\big[ m(\phi_h^{\Delta t,-}) - m(\phi)\big]}_{L^\infty}
    + \nnorm{ \calI_h\big[ m(\phi) \big] - m(\phi)}_{L^\infty}
    \\
    &\leq 
    C \nnorm{ m(\phi_h^{\Delta t,-}) - m(\phi)}_{L^\infty}
    + \nnorm{ \calI_h\big[ m(\phi) \big] - m(\phi)}_{L^\infty},
\end{align*}
where the right-hand side converges to zero, as $(h,\Delta t)\to (0,0)$.
Hence, by the boundedness of $m(\cdot)$ and \eqref{eq:clement_conv}, it holds
\begin{alignat*}{2}
    \calI_h\big[m(\phi_h^{\Delta t,-})\big]  \nabla\zeta_h 
    &\to m(\phi) \nabla\zeta
    &&\quad \text{ a.e. in } \Omega_T \text{ as } (h,\Delta t)\to (0,0) ,
    \\
    \aaabs{ \calI_h\big[m(\phi_h^{\Delta t,-})\big]  \nabla\zeta_h} 
    &\leq m_1 \abs{\nabla\zeta_h} 
    &&\quad \text{ a.e. in } \Omega_T, \text{ for all } h,\Delta t>0,
    \\
    m_1 \abs{\nabla\zeta_h} 
    &\to m_1 \abs{\nabla\zeta} 
    &&\quad \text{ strongly in } L^2(\Omega_T) \text{ as } (h,\Delta t)\to (0,0).
\end{alignat*}
Applying the generalised Lebesgue dominated convergence theorem \cite[Chap. 3]{alt_2016} yields
\begin{align*}
    \norm{\calI_h\big[m(\phi_h^{\Delta t,-})\big]  \nabla\zeta_h - m(\phi) \nabla\zeta}_{L^2(\Omega_T)} \to 0,
\end{align*}
as $(h,\Delta t)\to (0,0)$. Together with the weak convergence of $\nabla\mu_h^{\Delta t,+}$ to $\nabla\mu$ in $L^2(0,T;L^2)$, as $(h,\Delta t)\to (0,0)$, we obtain by the product of weak-strong convergence \cite[Chap. 8]{alt_2016}, that
\begin{align*}
    \int_0^T \int_\Omega \calI_h\big[m(\phi_h^{\Delta t,-})\big]  \nabla\zeta_h \cdot \nabla\mu_h^{\Delta t,+} \dx\dt
    \to \int_0^T \int_\Omega m(\phi)  \nabla\zeta \cdot \nabla\mu \dx\dt,
\end{align*}
as $(h,\Delta t)\to (0,0)$.
The terms involving $n(\cdot)$ can be dealt with in a similar fashion.

\bigskip
By the assumption that the source term $\Gamma_\phi(\cdot,\cdot)$ is Lipschitz continuous, we can proceed as follows.
\begin{align}
\begin{split}
\label{eq:theorem_convergence_I_II_III_IV}
    &\aaabs{\int_0^T \ssskp{\Gamma_\phi(\phi_h^{\Delta t,+},\sigma_h^{\Delta t,+})}{\zeta_h}_h
    - \ssskp{\Gamma_\phi(\phi, \sigma)}{\zeta}_{L^2} \dt}
    \\
    &\leq
    \aaabs{\int_0^T \ssskp{\calI_h\big[\Gamma_\phi(\phi_h^{\Delta t,+}, \sigma_h^{\Delta t,+})\big]}{\zeta_h}_h
    - \ssskp{\calI_h\big[\Gamma_\phi(\phi_h^{\Delta t,+}, \sigma_h^{\Delta t,+})\big]}{\zeta_h}_{L^2} \dt}
    \\
    &\quad
    + \aaabs{\int_0^T \ssskp{\calI_h\big[\Gamma_\phi(\phi_h^{\Delta t,+}, \sigma_h^{\Delta t,+})\big] - \Gamma_\phi(\phi_h^{\Delta t,+}, \sigma_h^{\Delta t,+})}{\zeta_h}_{L^2} \dt}
    \\
    &\quad
    + \aaabs{\int_0^T \ssskp{\Gamma_\phi(\phi_h^{\Delta t,+}, \sigma_h^{\Delta t,+}) 
    - \Gamma_\phi(\phi, \sigma) }{\zeta_h}_{L^2}\dt}
    \\
    &\quad
    + \aaabs{\int_0^T \ssskp{\Gamma_\phi(\phi, \sigma) }{\zeta_h - \zeta}_{L^2}\dt}
    \\
    &\eqqcolon I + II + III + IV.
\end{split}
\end{align}

On noting Hölder's inequality, \eqref{eq:lump_Sh_Sh}, \eqref{eq:inverse_estimate}, \eqref{eq:clement_error}, \eqref{eq:norm_equiv} and the growth assumptions on $\Gamma_\phi$, it holds that
\begin{align}
\begin{split}
\label{eq:theorem_convergence_I}
    I &\leq 
    C h \nnorm{\calI_h\big[\Gamma_\phi(\phi_h^{\Delta t,+}, \sigma_h^{\Delta t,+}) \big]}_{L^2(0,T;L^2)} \norm{\zeta}_{L^2(0,T;H^1)}
    \\
    &\leq 
    C h \Big( 1 
    + \norm{ \phi_h^{\Delta t,+} }_{L^2(0,T;L^2)} 
    + \norm{ \sigma_h^{\Delta t,+} }_{L^2(0,T;L^2)} \Big)
    \norm{\zeta}_{L^2(0,T;H^1)}.
\end{split}
\end{align}
Moreover, we receive on noting Hölder's inequality, \eqref{eq:interp_Lipschitz} and \eqref{eq:clement_error} that
\begin{align}
\begin{split}
\label{eq:theorem_convergence_II}
    II &\leq 
    C \nnorm{\calI_h\big[\Gamma_\phi(\phi_h^{\Delta t,+}, \sigma_h^{\Delta t,+})\big] - \Gamma_\phi(\phi_h^{\Delta t,+}, \sigma_h^{\Delta t,+})}_{L^2(0,T;L^2)} \norm{\zeta}_{L^2(0,T;H^1)}
    \\
    &\leq 
    C h \Big( 
    \norm{ \nabla\phi_h^{\Delta t,+} }_{L^2(0,T;L^2)} 
    + \norm{ \nabla\sigma_h^{\Delta t,+} }_{L^2(0,T;L^2)} \Big)
    \norm{\zeta}_{L^2(0,T;H^1)}.
\end{split}
\end{align}
Hence, on noting \eqref{eq:bounds_time} we obtain that $I, II\to 0$, as $(h,\Delta t)\to (0,0)$. 
Moreover, by the Lipschitz continuity of $\Gamma_\phi(\cdot,\cdot)$, \eqref{eq:conv_phi_strong}, \eqref{eq:conv_sigma_strong} and \eqref{eq:clement_error}, we obtain that $III\to 0$, as $(h,\Delta t)\to (0,0)$.
Further, it holds that $IV\to 0$, as $(h,\Delta t) \to (0,0)$ by noting \eqref{eq:clement_error}, \eqref{eq:conv_phi_mu_sigma} and the growth assumptions on $\Gamma_\phi$.
This leads to
\begin{align*}
    &\aaabs{\int_0^T \ssskp{\Gamma_\phi(\phi_h^{\Delta t,+},\sigma_h^{\Delta t,+})}{\zeta_h}_h
    - \ssskp{\Gamma_\phi(\phi, \sigma)}{\zeta}_{L^2} \dt}
    \to 0,
\end{align*}
as $(h,\Delta t)\to (0,0)$.

The terms containing $\psi_1'(\cdot)$, $\psi_2'(\cdot)$, $\Gamma_\sigma(\cdot,\cdot)$ can be treated similarly using the Lipschitz continuity and growth assumptions.

Finally, we obtain that $\phi,\mu,\sigma$ form a solution of the system \eqref{eq:phi_weak}--\eqref{eq:sigma_weak} in the required sense.
\end{proof}


\begin{remark}~
\begin{enumerate}
\item 
Assume that the source terms $\Gamma_{\phi}, \Gamma_{\sigma}:\R^2\to \R$ have the specific form \eqref{eq:Gamma_phi_sigma}. 
Then, the passage to the limit in the terms containing $\Gamma_\phi$ and $\Gamma_\sigma$ for $(h,\Delta t) \to (0,0)$ can be established with the following strategy.

First of all, similarly to \eqref{eq:interp_Lipschitz}, one can show the following result for $\Gamma_\phi$ (and similarly for $\Gamma_\sigma$):
\begin{align}
\label{eq:interp_specific_source}
    \nnorm{\calI_h\big[ \Gamma_\phi(\phi_h,\sigma_h) \big] - \Gamma_\phi(\phi_h,\sigma_h) }_{L^2(K)}^2
    &\leq C h^2 \Big( 
    L_h^2 \norm{\sigma_h}_{L^\infty(K)} ^2
    \norm{\nabla \phi_h}_{L^2(K)}^2 
    + \norm{h(\cdot)}_{L^\infty(\R)}^2 \norm{\nabla\sigma_h}_{L^2(K)}^2 \Big),
\end{align}
for all $\phi_h, \sigma_h \in \calS_h$ and all simplices $K\in\calT_h$.

For the terms $I$ and $IV$ in \eqref{eq:theorem_convergence_I_II_III_IV}, one can follow the proof of Theorem \ref{theorem:convergence} in order to show $I,IV \to 0$, as $(h,\Delta t)\to (0,0)$. Further, we can use the specific form of $\Gamma_\phi$ together with \eqref{eq:conv_phi_strong}, \eqref{eq:conv_sigma_strong} and \eqref{eq:clement_error} to obtain $III\to 0$, as $(h,\Delta t)\to (0,0).$
Instead of the calculation in \eqref{eq:theorem_convergence_II}, we proceed as follows to show $II\to 0$, as $(h,\Delta t)\to (0,0)$. On noting Hölder's inequality, \eqref{eq:clement_error}, \eqref{eq:interp_specific_source} and \eqref{eq:bounds_time}, it holds that
\begin{align*}
    II &=
    \aaabs{\int_0^T \ssskp{\calI_h\big[\Gamma_\phi(\phi_h^{\Delta t,+}, \sigma_h^{\Delta t,+})\big] - \Gamma_\phi(\phi_h^{\Delta t,+}, \sigma_h^{\Delta t,+})}{\zeta_h}_{L^2} \dt}
    \\
    &\leq 
    C  \nnorm{\calI_h\big[\Gamma_\phi(\phi_h^{\Delta t,+}, \sigma_h^{\Delta t,+})\big] - \Gamma_\phi(\phi_h^{\Delta t,+}, \sigma_h^{\Delta t,+})}_{L^2(0,T;L^2)}
    \norm{\zeta}_{L^2(0,T;H^1)} 
    \\
    &\leq C \Big( 
    h \norm{\sigma_h^{\Delta t,+}}_{L^2(0,T;L^\infty)}
    \norm{\nabla\phi_h^{\Delta t,+}}_{L^\infty(0,T;L^2)}
    + h \norm{\nabla\sigma_h^{\Delta t,+}}_{L^2(0,T;L^2)} \Big)
    \norm{\zeta}_{L^2(0,T;H^1)}
    \\
    &\leq C \big( h^{1 - \frac{d}{q}} + h \big) \norm{\zeta}_{L^2(0,T;H^1)},
\end{align*}
where we used
\begin{align*}
    h \norm{\sigma_h^{\Delta t,+}}_{L^2(0,T;L^\infty)}
    &\leq C 
    h^{1 - \frac{d}{q}} \norm{\sigma_h^{\Delta t,+}}_{L^2(0,T;L^q)}
    \leq C 
    h^{1 - \frac{d}{q}} \norm{\sigma_h^{\Delta t,+}}_{L^2(0,T;H^1)},
\end{align*}
for any $q\in(1,\infty)$ if $d=1$ and $q \in (d,\frac{2d}{d-2})$ if $d\in\{2,3\}$ on noting \eqref{eq:inverse_estimate} and the Sobolev embedding $H^1(\Omega) \hookrightarrow L^q(\Omega)$ for $d\in\{1,2,3\}$. 

Hence, we finally obtain that
\begin{align*}
    \aaabs{\int_0^T \sskp{\Gamma_\phi(\phi_h^{\Delta t,+},\sigma_h^{\Delta t,+})}{\zeta_h}_h
    - \sskp{\Gamma_\phi(\phi, \sigma)}{\zeta}_{L^2} \dt}
    \to 0,
\end{align*}
as $(h,\Delta t)\to (0,0)$. The other source term $\Gamma_\sigma$ can be treated similarly.

\item Let us assume that the mobility functions $m(\cdot),n(\cdot)$ are constant and that the source terms $\Gamma_\phi,\Gamma_\sigma$ have the specific form \eqref{eq:Gamma_phi_sigma}.
Then, it follows from \cite[Thm.~2.2]{garcke_lam_2017} that solutions $(\phi,\mu,\sigma)$ of \eqref{eq:phi_weak}--\eqref{eq:sigma_weak} depend continuously on the initial and boundary data. In particular, solutions of \eqref{eq:phi_weak}--\eqref{eq:sigma_weak} are unique.
This result also holds if $\Gamma_\phi,\Gamma_\sigma$ are Lipschitz continuous and the proof is similar to the proof of Theorem \ref{theorem:FE_cont_dep}.

Moreover, this result can also be obtained from the passage of the limit in Theorem \ref{theorem:FE_cont_dep}, as $(h,\Delta t)\to (0,0)$, supposed that one can pass to the limit on the right-hand side of \eqref{eq:FE_cont_dep}, as $(h,\Delta t)\to (0,0)$.

Hence, under the assumptions of Theorem \ref{theorem:convergence} and, in addition, if the mobility functions are constant, every subsequence of $\big( \phi_h^{\Delta t(,\pm)}, \mu_h^{\Delta t(,\pm)}, \sigma_h^{\Delta t(,\pm)} \big)$ has a further subsequence converging to the same limit $(\phi,\mu,\sigma)$. We then already have that the whole sequence $\big( \phi_h^{\Delta t(,\pm)}, \mu_h^{\Delta t(,\pm)}, \sigma_h^{\Delta t(,\pm)} \big)$ converges to $(\phi,\mu,\sigma)$.
\end{enumerate}
\end{remark}

\section{Numerical results}
\label{sec:numeric}

In this section, we present numerical results for the model \eqref{eq:all}. In particular, we want to illustrate the practicability of the fully-discrete scheme \eqref{eq:phi_FE}--\eqref{eq:sigma_FE} in the space dimensions $d\in\{1,2,3\}$. First, let us introduce the following scheme for which in \eqref{eq:phi_FE}--\eqref{eq:sigma_FE} specific choices of the parameters $A,B$ and the nutrient mobility function $n(\cdot)$ have been applied. 

For given discrete initial data $(\phi_h^{0},\sigma_{h}^{0}) \in (\calS_h)^2$ and for $n=1,...,N_T$, find the discrete solution triplet $(\phi_h^{n}, \mu_h^n, \sigma_{h}^{n}) \in (\calS_h)^3 $ which satisfies for any test function triplet $(\zeta_h, \rho_h, \xi_h) \in (\calS_h)^3 $:
\begin{subequations}
\begin{align}
    \label{eq:phi_FE_1D}
    \int_\Omega \calI_h \Big[ \Big(\frac{\phi_h^n-\phi_h^{n-1}}{\Delta t}
    - \Gamma_{\phi}(\phi_h^n,\sigma_h^n) \Big) \zeta_h \Big]
    + \calI_h\big[ m(\phi_h^{n-1}) \big] \nabla\mu_h^n \cdot \nabla \zeta_h \dx
    = 0,
    \\
    \label{eq:mu_FE_1D}
    \int_\Omega \calI_h \Big[ \Big( \mu_h^n  
    - \frac{\beta}{\epsilon} \psi_1'(\phi_h^n) 
    - \frac{\beta}{\epsilon} \psi_2'(\phi_h^{n-1})
    + \chi_\phi \sigma_h^n \Big) \rho_h \Big] 
    - \beta\epsilon \nabla\phi_h^n \cdot \nabla\rho_h \dx 
    = 0,
    \\
    \label{eq:sigma_FE_1D}
    \nonumber
    \int_\Omega \calI_h \Big[ \Big(\frac{\sigma_h^n-\sigma_h^{n-1}}{\Delta t}
    + \Gamma_{\sigma}(\phi_h^n,\sigma_h^n) \Big) \xi_h \Big]
    + \Big( \nabla\sigma_h^n - 
    \eta \nabla\phi_h^n \Big)  \cdot \nabla \xi_h  \dx
    \quad\quad
    \\
    + \int_{\partial\Omega} \calI_h\Big[ K \big(\sigma_h^n 
    - \sigma_{\infty,h}^n \big) \xi_h \Big] \dH^{d-1}
    = 0.
\end{align}
\end{subequations}
Here we defined the parameters $A,B$ as $A = \frac{\beta}{\epsilon}, B=\beta \epsilon$, where $\epsilon>0$ is proportional to the width of the diffuse interface, and $\beta>0$ denotes the surface tension.
By the simplifying assumption that diffusion processes of the nutrient are not influenced by the type of tissue, we limit the numerical experiments as in \cite{GarckeLSS_2016} to the case of a constant nutrient mobility function, i.e.
$n(\phi_h^{n-1}) = \chi_\sigma^{-1}.$
Further, in the model \eqref{eq:all}, the effects of chemotaxis (movement of the tumour along the nutrient gradient) and active nutrient transport (active movement of nutrients towards the tumour) are both connected via the parameter $\chi_\phi$. The choice of the nutrient mobility function $n(\cdot)$ was introduced and motivated in \cite{GarckeLSS_2016} in order to decouple these two processes. In particular, the ratio between the parameters $\chi_\phi$ and $\chi_\sigma$, i.e. $\eta\coloneqq \frac{\chi_\phi}{\chi_\sigma},$
in \eqref{eq:sigma_FE_1D} accounts for active nutrient transport while $\chi_\phi$ in \eqref{eq:mu_FE_1D} controls the effects of chemotaxis.

Throughout all numerical experiments, we make the following choices for the source terms $\Gamma_\phi(\cdot,\cdot), \Gamma_\sigma(\cdot,\cdot)$, the potential $\psi(\cdot)=\psi_1(\cdot) + \psi_2(\cdot)$ and the mobility function $m(\cdot)$ which are defined for all $r,s\in[-2,2]$ by
\begin{alignat}{2}
    \label{eq:num_gamma_phi}
    \Gamma_{\phi}(r, s) &= \frac{1}{2} \big( \lambda_p s - \lambda_a \big) (1+r),
    \\
    \label{eq:num_gamma_sigma}
    \Gamma_{\sigma}(r, s) &= \frac{1}{2}\lambda_c s (1+r) ,
    \\
    \label{eq:num_psi_1'}
    \psi_1(r) &= \frac{1}{4} r^4 
    && \quad\text{with} \quad 
    \psi_1'(r) = r^3,
    \\
    \label{eq:num_psi_2'}
    \psi_2(r) &= - \frac{1}{2} r^2
    && \quad \text{with} \quad 
    \psi_2'(r) = - r,
    \\
    \label{eq:num_mobility}
    m(r) &= \frac{M}{2} (1+r)^2 + m_0 . 
    %
\end{alignat}
For $r,s \in \R \backslash [-2,2]$, we truncate the functions $\Gamma_\phi(\cdot,\cdot), \Gamma_\sigma(\cdot,\cdot)$, $m(\cdot)$ and restrict the potential $\psi(\cdot)=\psi_1(\cdot) + \psi_2(\cdot)$ to quadratic growth such that the assumptions ($A2$)--($A4$) hold true. We remind that the approach of the truncated functions is inevitable for the mathematical analysis of the discrete scheme \eqref{eq:phi_FE}--\eqref{eq:sigma_FE}.
In practice, our numerical experiments indicate that the order parameter $\phi$ and the nutrient $\sigma$ stay within the range $[-2,2]$. In particular, we observe $\phi \in [-1-\delta, 1+\delta]$ and $\sigma \in [0, 1+\delta]$ for a relatively small constant $\delta>0$.


In the biological context, the source terms in \eqref{eq:num_gamma_phi}--\eqref{eq:num_gamma_sigma} model the processes of proliferation, apoptosis and nutrient consumption with the corresponding rates $\lambda_p, \lambda_a, \lambda_c \geq 0$.
We assume these effects to occur only in presence of tumour cells and to vanish in the pure healthy phase where $\phi=-1$.
Also, the potential $\psi$ is chosen of polynomial type such that $\psi=\psi_1 + \psi_2$ with $\psi_1$ convex and $\psi_2$ concave.
Moreover, the choice of $m(\cdot)$ in \eqref{eq:num_mobility} allows for a constant mobility ($M=0$, $m_0>0$) and for a (nearly) one-sided degenerate mobility function ($M>0$, $m_0\approx 0$), where both choices were suggested in \cite{ebenbeck_garcke_nurnberg_2020}. 

Let us now explain some implementation aspects for the system \eqref{eq:phi_FE_1D}--\eqref{eq:sigma_FE_1D}. All calculations have been performed in Python using the finite element software tool FEniCS \cite{fenics_book_2012}. In order to solve the nonlinear system \eqref{eq:phi_FE_1D}--\eqref{eq:sigma_FE_1D}, it is linearized with the Newton method and the resulting linear systems are solved with the PETSc-built in sparse LU solver which is provided by FEniCS.

\bigskip

At first, we want to investigate the numerical errors of discrete solutions in one spatial dimension on the fixed interval $\Omega = (0,1)\subset \R$. 
The strategy is similar to 
\cite{blowey_elliott_1992}, where the authors verified numerical convergence rates for the classical Cahn--Hilliard equation in one spatial dimension. 
We proceed as follows. We calculate the numerical solutions $\phi_h^{\Delta t(,\pm)}$, $\mu_h^{\Delta t(,\pm)}$ and $\sigma_h^{\Delta t(,\pm)}$ on a fixed time interval $(0,T)$ for some different values of $h$ and $\Delta t$, where $T=0.1$, $\Delta t = h^2$ and $h \in \{ \frac{1}{32}, \frac{1}{64}, \frac{1}{128}, \frac{1}{256} \}$. 
Then, a comparison is made with a reference solution $\big(\phi^*, \mu^*, \sigma^*\big)$. Due to the lack of knowledge of exact solutions of \eqref{eq:phi}-\eqref{eq:sigma}, the reference solution is approximated by a discrete solution obtained on a fine mesh with mesh and time sizes $h_* = \frac{1}{1024}$ and $\Delta t_* = h_*^2$, respectively.

Moreover, the initial conditions are constructed as follows. Some given functions $\tilde\phi_0$ and $\tilde\sigma_0$ are interpolated as initial data for some numerical solutions $\tilde\phi$ and $\tilde\sigma$, where the discrete setup is the same as for the reference solutions. Then the nodal interpolations of $\tilde\phi$ and $\tilde\sigma$ at time $\tilde T=0.01$ are taken as the initial data for the error tests, i.e.~$\phi_h^0 = \calI_h \tilde\phi(\tilde T)$, $\sigma_h^0 = \calI_h \tilde\sigma(\tilde T)$, and $\phi^*(0) = \tilde\phi(\tilde T)$, $\sigma^*(0) = \tilde\sigma(\tilde T)$.

We choose the functions $\tilde\phi_0$ and $\tilde\sigma_0$ as
\begin{align*}
    \tilde\phi_0(x) &= - \tanh \Big(\frac{r(x)-0.2}{\sqrt{2}\epsilon}  \Big), \quad r(x)=\abs{x-0.5}, \quad x\in\Omega,
    \\
    \tilde\sigma_0(x) &= 1, \quad x\in\Omega,
\end{align*}
and the model parameters 
\begin{equation}
\label{eq:1d_parameter1}
\begin{aligned}
    &\beta=0.1,
    \quad\quad  &&\epsilon=0.02, 
    \quad\quad  &&\chi_\phi=1,
    \quad\quad  &&\eta=0.02, 
    \quad\quad  &&\lambda_p=0,
    \quad\quad  &&\lambda_a=5,
    \\
    &\lambda_c=2,
    \quad\quad  && \sigma_{\infty,h}^n =1 , 
    \quad\quad  && K= 1,
    \quad\quad  && M = 0, 
    \quad\quad  && m_0 = 1 .
\end{aligned}
\end{equation}

The reference solution at times $t\in\{0, \ 0.04768, \ 0.1\}$ is visualized in Figure \ref{fig:1d_reference} and 
the numerical errors in several norms and the associated experimental orders of convergence (EOC) are displayed in Table \ref{tab:error1_Robin}.
Our results agree with the results which have been obtained for other phase-field systems in the literature, see, e.g., \cite{blowey_elliott_1992, elliott_second_order}.

\begin{figure}[H]
    \centering
	\includegraphics[width=0.32\textwidth]{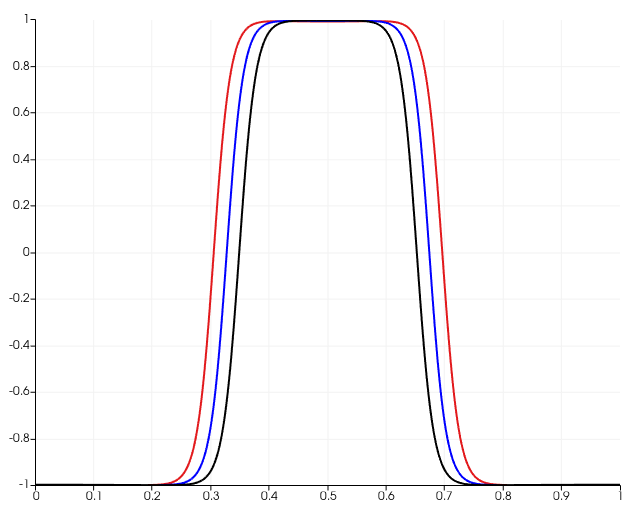}
	\includegraphics[width=0.32\textwidth]{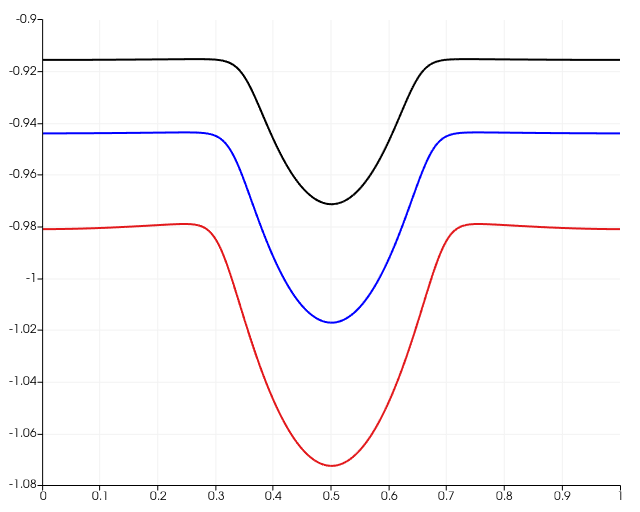}
	\includegraphics[width=0.32\textwidth]{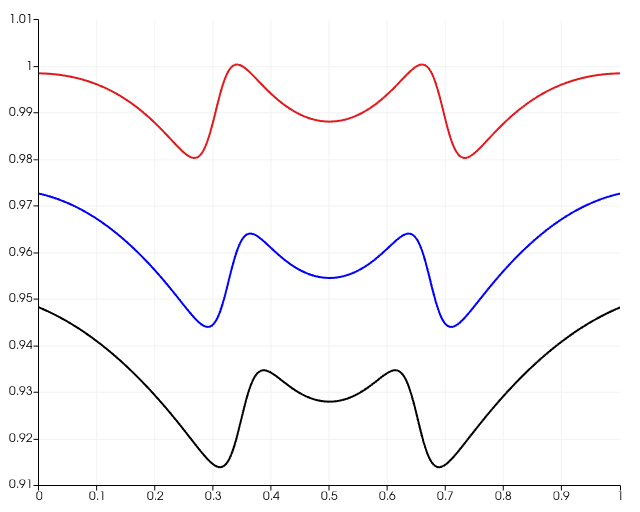}
    \caption{Reference solution $\phi^*$ (left), $\mu^*$ (center) and $\sigma^*$ (right) at times $t=0$ (red), $t=0.04768$ (blue) and $t=0.1$ (black).} 
    \label{fig:1d_reference}
\end{figure}

\begin{table}[H]
\centering
\begin{subtable}{1.0\textwidth}
    \centering
    \begin{tabular}{l|l|l|l|}
    $h$ & $\norm{\phi_h^{\Delta t,+} - \phi^*}_{L^\infty(0,T; L^2)}$ 
    & $\norm{\phi_h^{\Delta t,+} - \phi^*}_{L^2(0,T; L^2)} $
    & $\norm{\nabla\phi_h^{\Delta t,+} - \nabla\phi^*}_{L^2(0,T; L^2)}$
    \\ \hline
    1/32 & 0.08163562460405772 & 0.01800710854612 & 1.2066061901421978 \\
    1/64 & 0.006693917025388757 & 0.0015120246761391146 & 0.4484730959466349 \\
    1/128 & 0.0016810945887728536 & 0.0003596862687547078 & 0.218637109333781 \\
    1/256 & 0.0004050560991618219 & 8.65161998871061e-05 & 0.10622248307490406 \\ \hline
    EOC & 2.053207265533753 & 2.0556968814724033 & 1.0414491311766247 \\
    \end{tabular}
    \caption{Numerical errors and EOC for the phase field variable.}
\end{subtable}

\begin{subtable}{1.0\textwidth}
    \centering
    \begin{tabular}{l|l|l|l|}
    $h$ & $\norm{\mu_h^{\Delta t,+} - \mu^*}_{L^\infty(0,T; L^2)}$ 
    & $\norm{\mu_h^{\Delta t,+} - \mu^*}_{L^2(0,T; L^2)} $
    & $\norm{\nabla\mu_h^{\Delta t,+} - \nabla\mu^*}_{L^2(0,T; L^2)}$
    \\ \hline
    1/32  & 0.3289157935738554 & 0.07020245096950607 & 0.0712371886033935 \\
    1/64  & 0.015083096865408247 & 0.0041559773789519834 & 0.007677948652658751 \\
    1/128  & 0.003485687033855135 & 0.0009995787982063913 & 0.0033683163873372223 \\
    1/256  & 0.0013677932457678628 & 0.00023695351944984542 & 0.0015255678722707706 \\ \hline
    EOC & 1.3495928709653409 & 2.076716211785033 & 1.1426812910887603 \\
    \end{tabular}
    \caption{Numerical errors and EOC for the chemical potential.}
\end{subtable}

\begin{subtable}{1.0\textwidth}
    \centering
    \begin{tabular}{l|l|l|l|}
    $h$ & $\norm{\sigma_h^{\Delta t,+} - \sigma^*}_{L^\infty(0,T; L^2)}$ 
    & $\norm{\sigma_h^{\Delta t,+} - \sigma^*}_{L^2(0,T; L^2)} $
    & $\norm{\nabla\sigma_h^{\Delta t,+} - \nabla\sigma^*}_{L^2(0,T; L^2)}$
    \\ \hline
    1/32 &  0.0014306201003786636 & 0.0003185502409487455 & 0.023247581874250266 \\
    1/64  & 0.00012800173351707603 & 2.9889592098977667e-05 & 0.008618697011299468 \\
    1/128  & 3.2174915725953576e-05 & 7.129324028437376e-06 & 0.004199016945370087 \\
    1/256  & 7.754159898532114e-06 & 1.7138863454008837e-06 & 0.0020397507710109924 \\ \hline
    EOC & 2.0528939793534287 & 2.056493851277305 & 1.0416587244340068 \\
    \end{tabular}
    \caption{Numerical errors and EOC for the nutrient.}
\end{subtable}
\caption{Error investigation in one spatial dimension.} \label{tab:error1_Robin}
\end{table}


\bigskip

In the following, we present the results of a long-time simulation in two space dimensions which is motivated by the numerical examples from \cite{GarckeLSS_2016}. 
The following set of parameters is used.
\begin{equation}
\label{eq:2d_parameter}
\begin{aligned}
    &\Omega=(-12.5,12.5)^2,  
    \quad\quad  &&\tau=10^{-3},
    \quad\quad  &&\epsilon=0.01, 
    \quad\quad  &&\beta=0.1, 
    \quad\quad  &&\chi_\phi=5,
    \\
    &\eta = 0.04, 
    \quad\quad  &&\lambda_p=0.5, 
    \quad\quad  &&\lambda_a=0, 
    \quad\quad  &&\lambda_c=1,
    \quad\quad  && \sigma_{\infty,h}^n =1, 
    \\
    &K=1000,
    \quad\quad  && M = 1 , 
    \quad\quad  && m_0 = 5\cdot 10^{-6} .
\end{aligned}
\end{equation}

In practice, the computations on the domain $\Omega = (-12.5, 12.5)^2 \subset\R^2$ have been performed only on the upper right square $\Omega=(0, 12.5)^2$ due to symmetry reasons. For $\sigma$, homogenous Neumann boundary conditions are used on
\begin{align*}
    \Big(\{0\}\times[0,12.5]\Big) 
    \cup \Big([0,12.5]\times\{0\}\Big),
\end{align*}
and Robin boundary conditions on 
\begin{align*}
    \Big(\{12.5\}\times[0,12.5]\Big)
    \cup \Big([0,12.5]\times\{12.5\}\Big).
\end{align*}


As initial data we start with a slightly perturbed sphere for the tumour, see Figure \ref{fig:2d_phi0}. Besides, the nutrient is assumed to be unconsumed in the beginning. In particular, we set
\begin{align}
    &\phi_0(x) = 
    - \tanh \Big(\frac{r(x)}{\sqrt{2}\epsilon} \Big) ,
    \\
    & \sigma_0(x) = 1,
\end{align}
where
\begin{align*}
    r(x) = \abs{x} - (2 + 0.1\cos(2\theta)), 
\end{align*}
where $x = \abs{x} ( \cos(\theta),\sin(\theta) )^T$.

\begin{figure}[ht]
    \centering
	\includegraphics[width=0.35\textwidth,trim={10cm 1cm 10cm 5cm},clip]{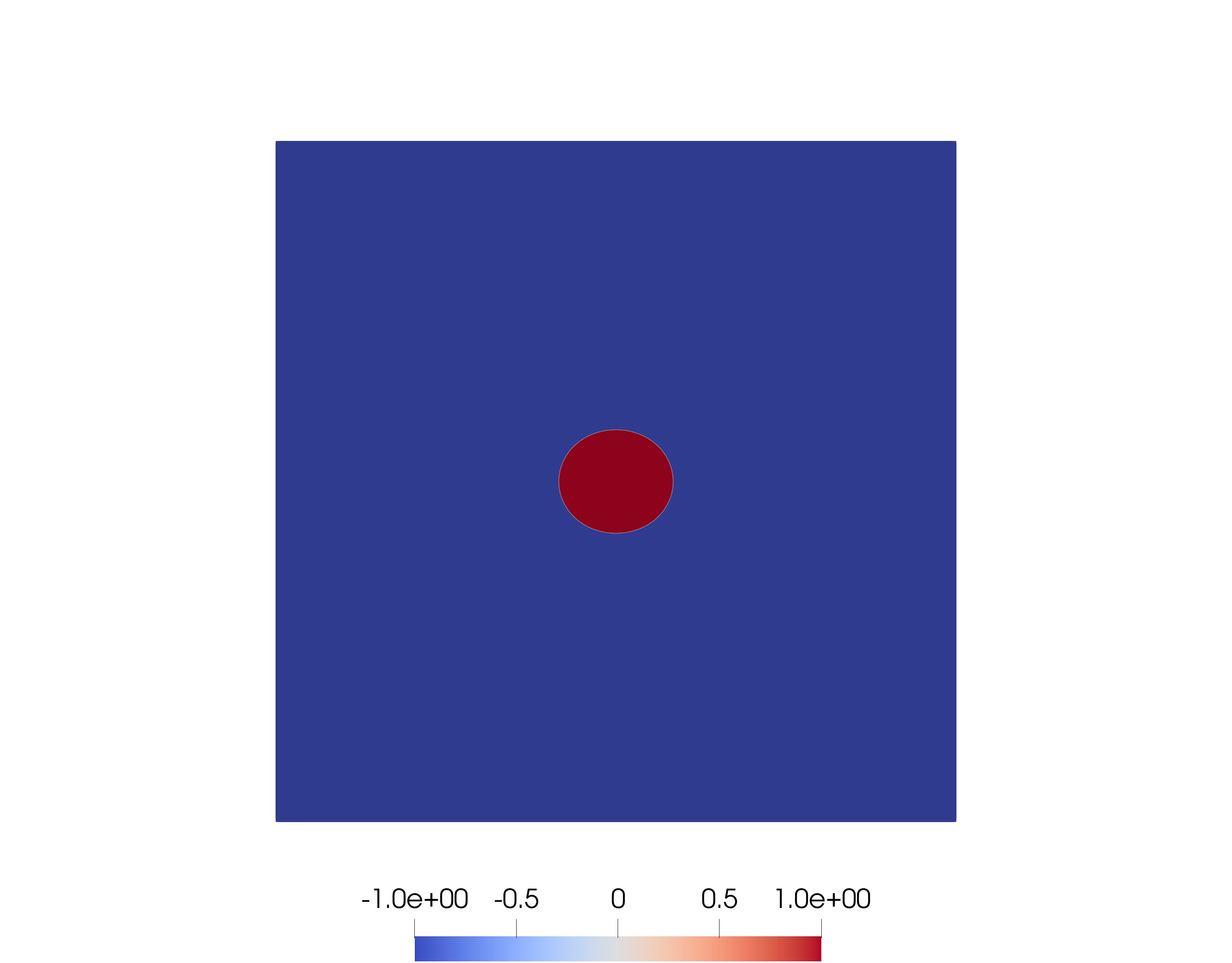}
    \caption{Initial tumour size in two dimensions: A slightly perturbed sphere.} 
    \label{fig:2d_phi0}
\end{figure}


We use a mesh refinement strategy which is similar to the one in \cite{GarckeLSS_2016}. Since the interfacial thickness is assumed to be proportional to $\epsilon$, in order to resolve the interfacial layer we need to choose $h$ such that there lie enough spatial mesh points on the interface. Far away from the interface, the local mesh size can be chosen larger and hence adaptivity in space can heavily speed up computations. 
For the simulations, a mesh with maximal diameter $h_{\max} = 12.5\cdot 2^{-6} \approx 0.1953$ and minimal diameter $h_{\min} = 12.5 \cdot 2^{-10} \approx 0.0122$ are used.
For more details regarding the mesh refinement strategy, see \cite{trautwein_MA}.


In Figure \ref{fig:2d_proliferation_2}, we display $\phi$ (top row) and $\sigma$ (bottom row) at times $t=8, 15, 22$. 
One can clearly see that after some time, the tumour develops fingers towards regions with higher concentration of the nutrient which allows for better access to the nutrient. 
This effect can be seen as the chemotactic response of the tumour to the lack of nutrients. 

\begin{figure}[H]
    \centering
    \subfloat
	{\includegraphics[width=0.32\textwidth,trim={10cm 2cm 10cm 0},clip]{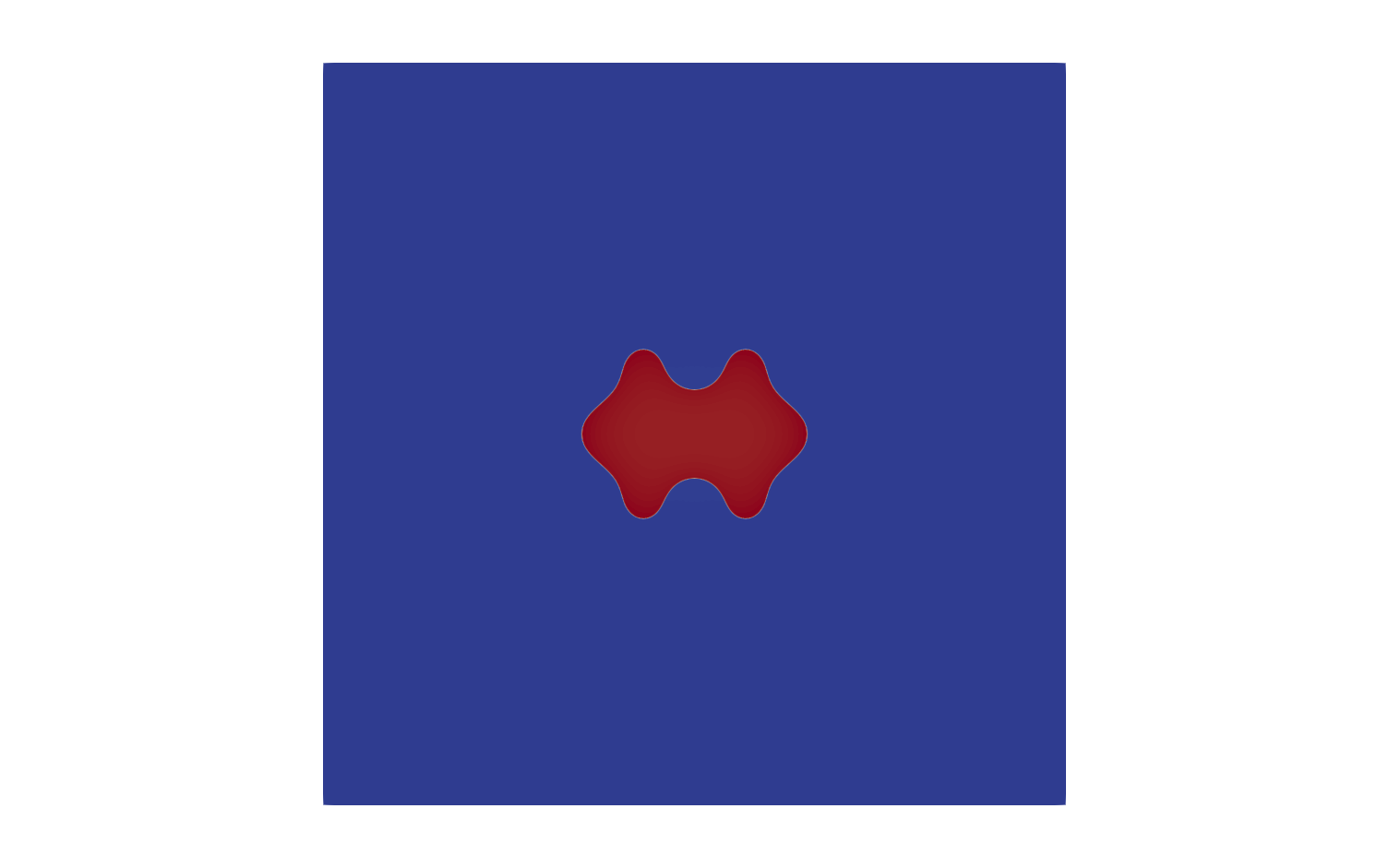}}
	\subfloat
	{\includegraphics[width=0.32\textwidth,trim={10cm 2cm 10cm 0},clip]{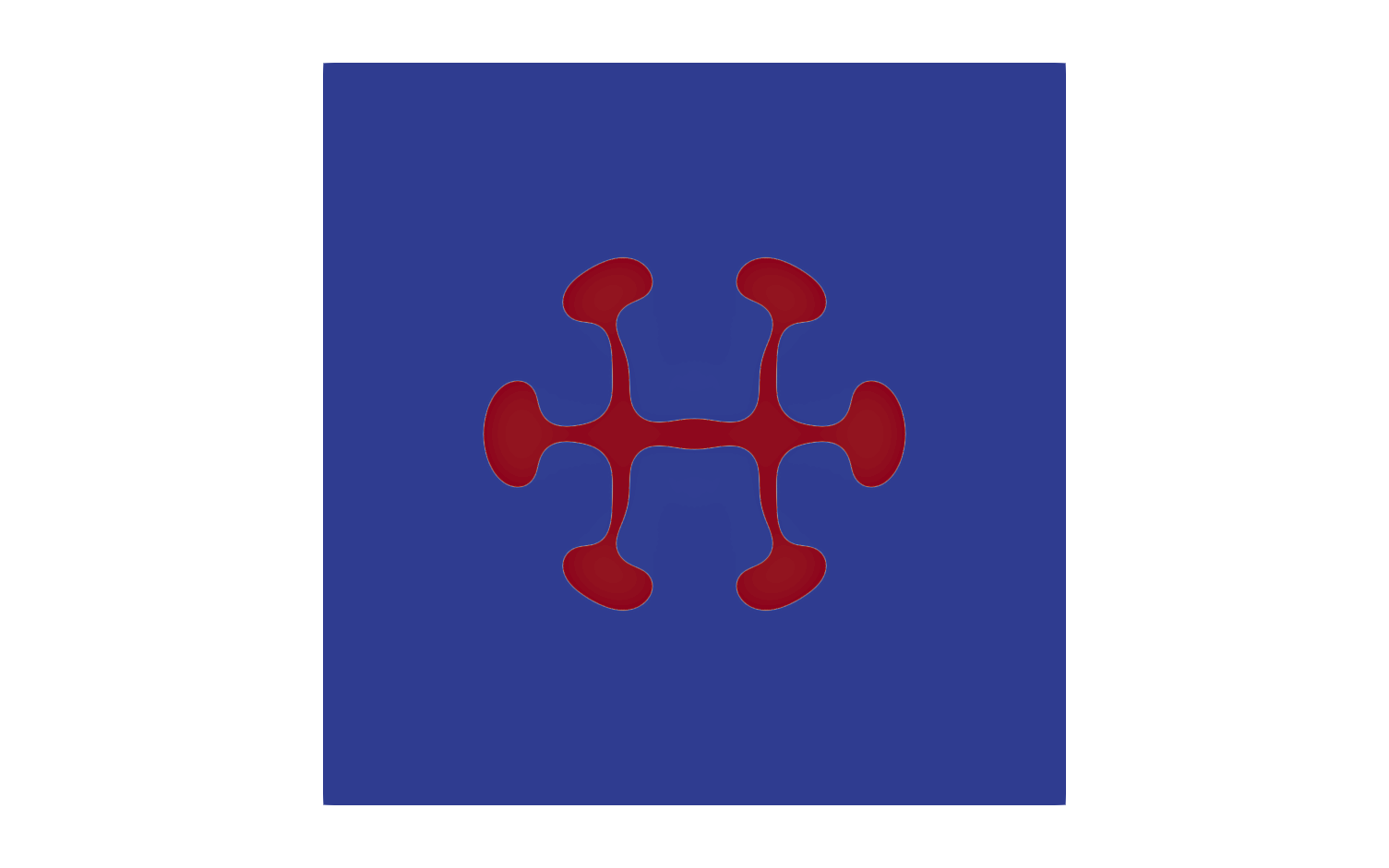}}
	\subfloat
	{\includegraphics[width=0.32\textwidth,trim={10cm 2cm 10cm 0},clip]{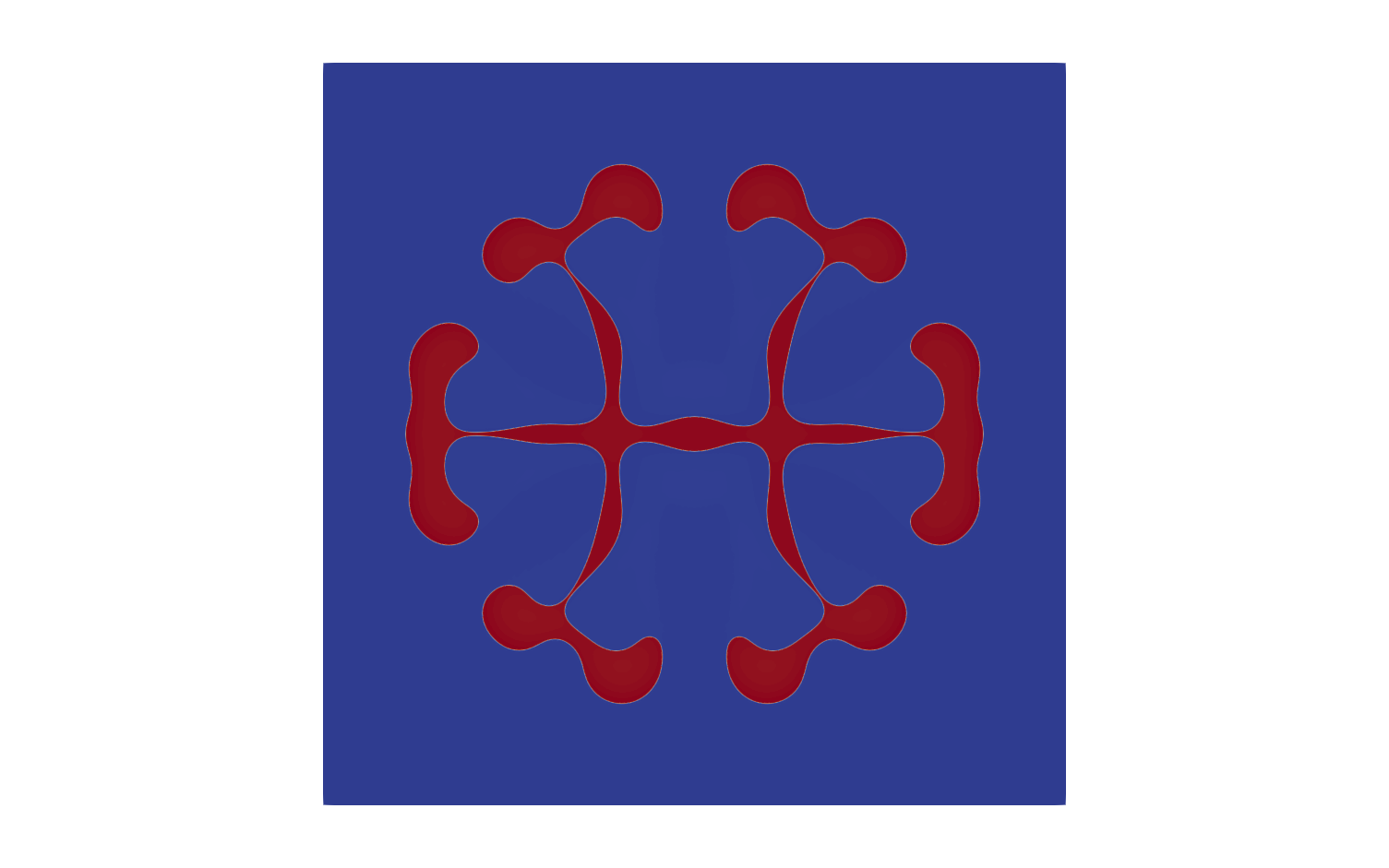}}
	\\
	\subfloat
	{\includegraphics[width=0.32\textwidth,trim={10cm 0cm 10cm 0},clip]{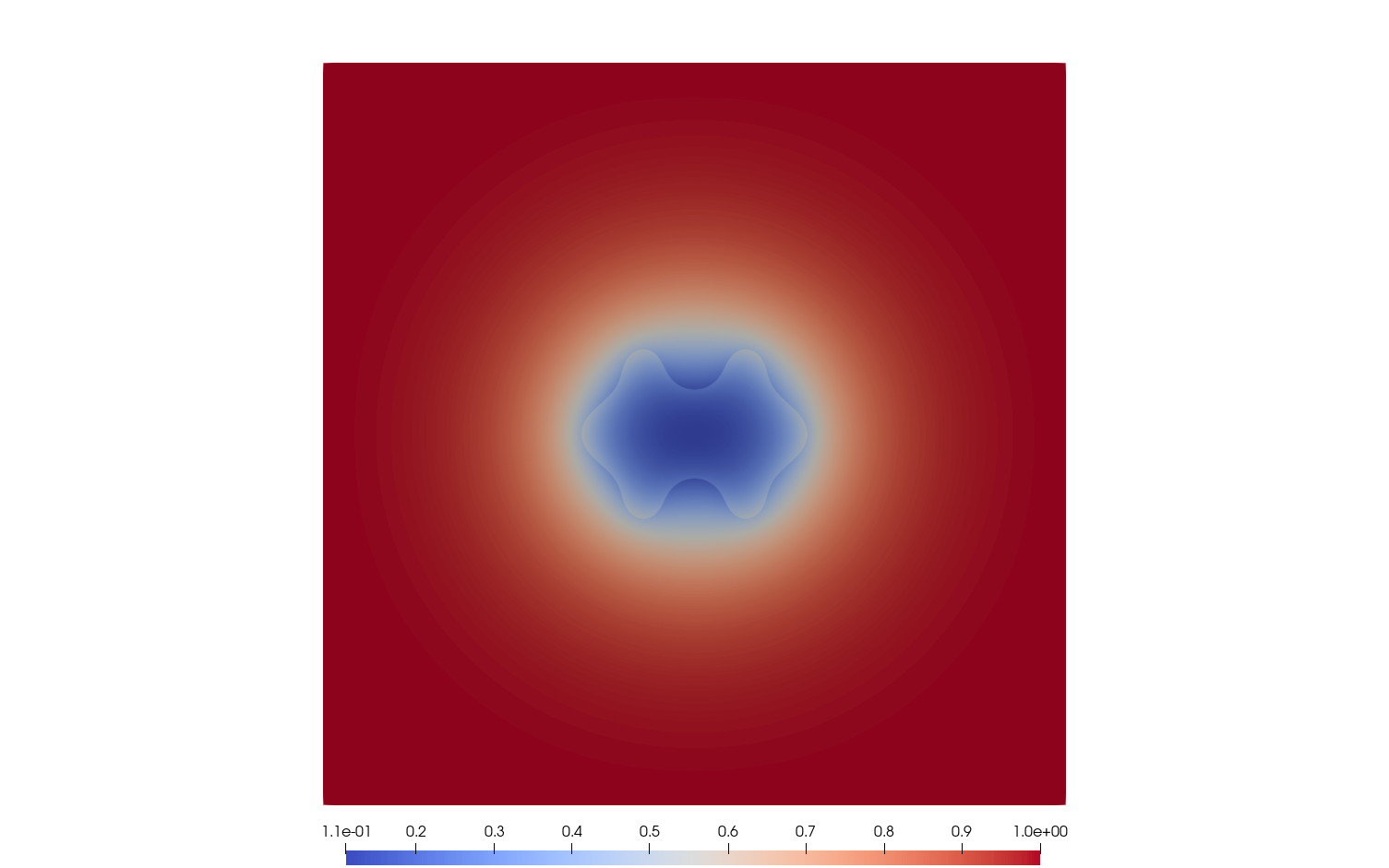}}
	\subfloat
	{\includegraphics[width=0.32\textwidth,trim={10cm 0cm 10cm 0},clip]{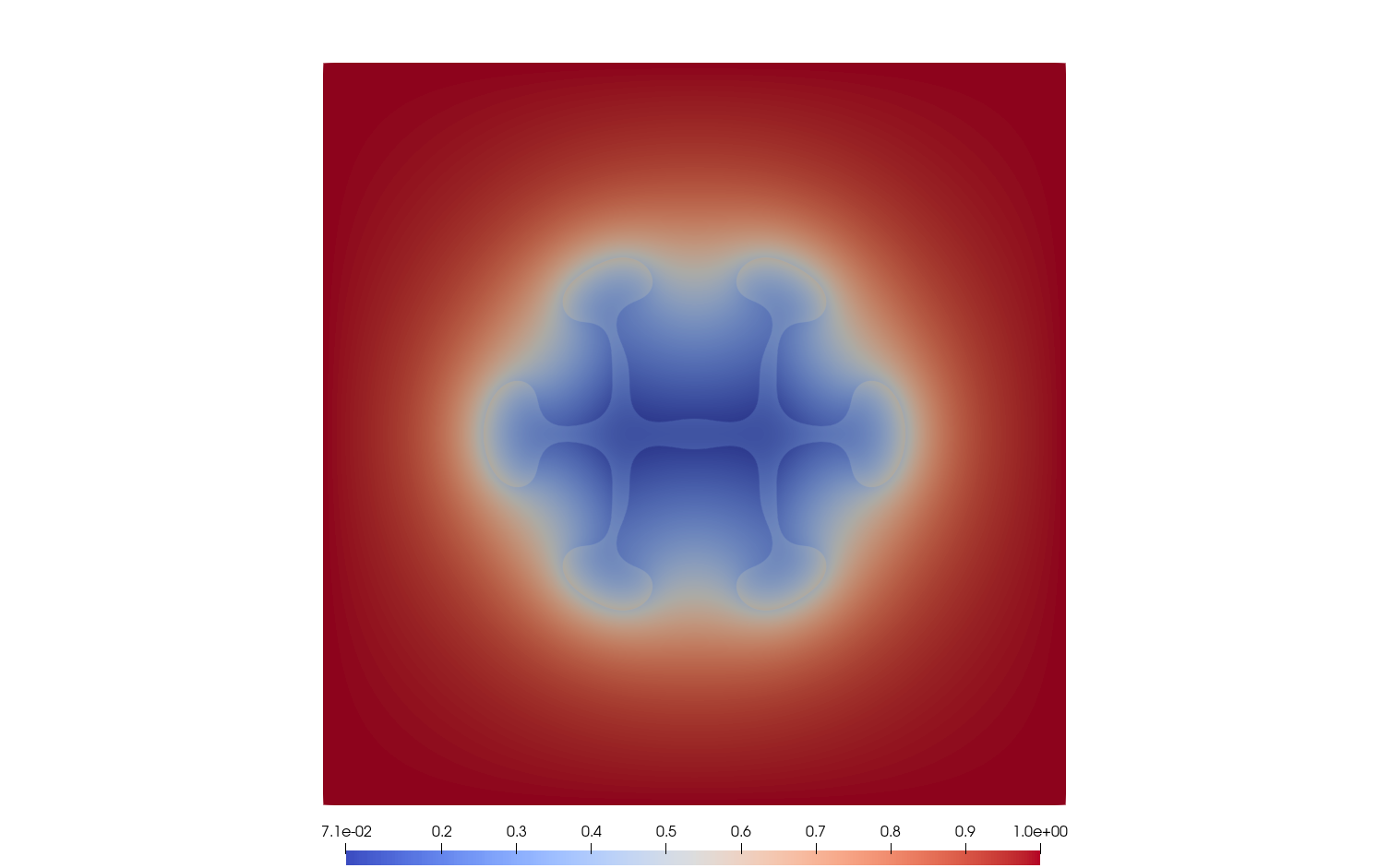}}
	\subfloat
	{\includegraphics[width=0.32\textwidth,trim={10cm 0cm 10cm 0},clip]{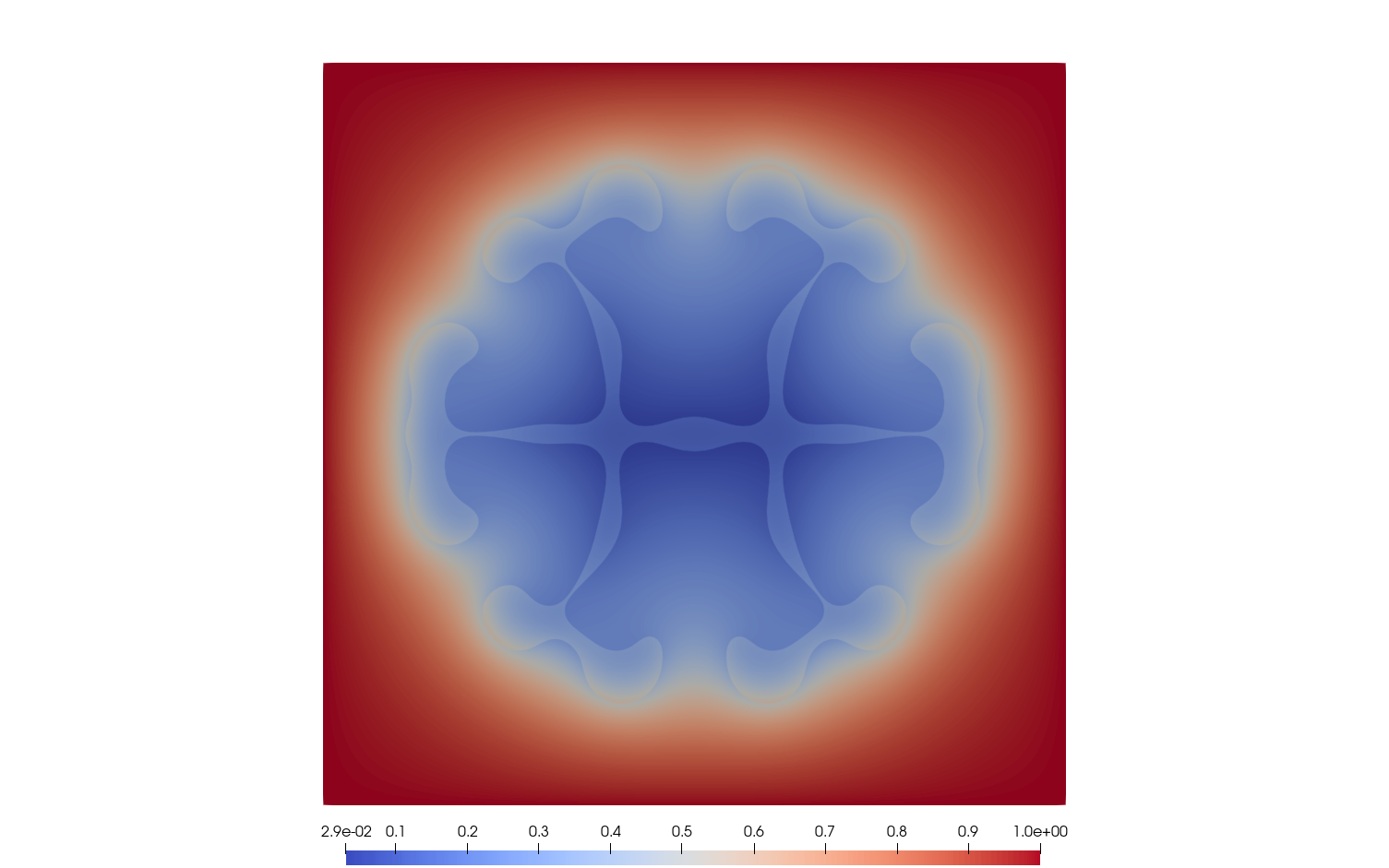}}
    \caption{Numerical solution in two dimensions at times $t=8, 15, 22$.} \label{fig:2d_proliferation_2}
\end{figure}




\bigskip

For the numerical results in three dimensions, we use the following set of parameters:
\begin{equation}
\label{eq:3d_parameter}
\begin{aligned}
    &\Omega=(-3,3)^3,  
    \quad\quad  &&\tau=10^{-3},
    \quad\quad  &&\epsilon=0.02, 
    \quad\quad  &&\beta=0.1, 
    \quad\quad  &&\chi_\phi \in\{15,30\},
    \\
    &\eta = 0.02,
    \quad\quad  &&\lambda_p=0.5,  
    \quad\quad  &&\lambda_a=0, 
    \quad\quad  &&\lambda_c=2,
    \quad\quad  && \sigma_{\infty,h}^n =1, 
    \\
    &K=1000,
    \quad\quad  && M = 1, 
    \quad\quad  && m_0 = 5\cdot 10^{-6}.
\end{aligned}
\end{equation}

Similarly to the two dimensional case, the computations on the domain $\Omega = (-3, 3)^3 \subset\R^3$ have been performed only on the subdomain $\Omega=(0, 3)^3$ because of symmetry reasons. For $\sigma$, we use homogenous Neumann boundary conditions on
\begin{align*}
    \Big( \{0\}\times[0,3]\times[0,3]\Big) \cup \Big([0,3]\times\{0\}\times[0,3]\Big) \cup \Big([0,3]\times[0,3]\times\{0\}\Big),
\end{align*} 
and Robin boundary conditions on
\begin{align*}
    \Big(\{3\}\times[0,3]\times[0,3]\Big) \cup \Big([0,3]\times\{3\}\times[0,3]\Big) \cup \Big([0,3]\times[0,3]\times\{3\}\Big).
\end{align*}
For the simulations, a mesh with maximal diameter $h_{\max} = \sqrt{3}\cdot 0.3 \approx 0.517$ and minimal diameter $h_{\min} = 0.3 \cdot 2^{-4} = 0.01875$ is chosen.

For the first example, the initial data are given by
\begin{align}
\label{eq:initial_3D_1}
\begin{split}
    \phi_0(x) &= - \tanh\Big( \frac{r(x)-0.1}{\sqrt{2}\epsilon} \Big),
    \\
    r(x) &= \frac{1}{3} \big( 0.5 x_1^4 +  1.5 x_2^4 + 1.5 x_3^4 \big)^{1/4},
    \\
    \sigma_0(x) &= 0.9, 
    \\
    x &= (x_1, x_2, x_3)^T.
\end{split}
\end{align}
In the following, we visualize $\phi$ (top row) and $\sigma$ (middle row) on the subdomain $(0,3)^3$ of $(-3,3)^3$. In the bottom row, we show the surface of the tumour tissue within the whole domain $(-3,3)^3$, where a different perspective is chosen for visualization reasons.
In Figure \ref{fig:3D_4a} we visualize the solution at times $t=0.5, 1, 1.5$ with chemotaxis parameter $\chi_\phi = 30$. 
Further, the solution with chemotaxis parameter $\chi_\phi=15$ is shown in Figure \ref{fig:3D_4c} at times $t=1, 3, 5$. In both cases, the tumour undergoes morphological instabilities and the shape resembles a dumbbell. For larger value of $\chi_\phi$, the evolution of the tumour is quicker.

\begin{figure}[ht]
    \centering
	\subfloat
	{\includegraphics[width=0.32\textwidth,trim={10cm 2cm 10cm 0},clip]{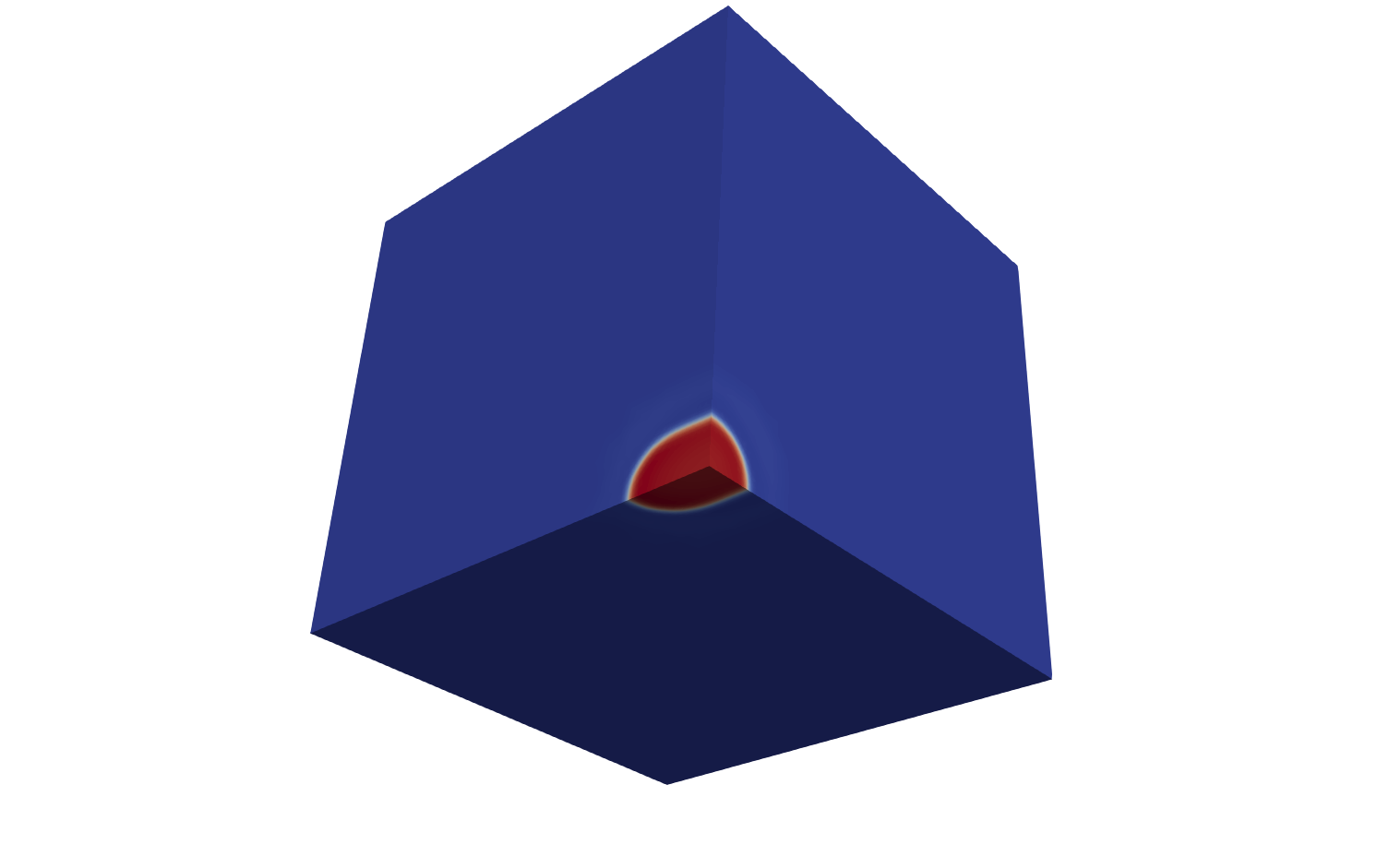}}
	\subfloat
	{\includegraphics[width=0.32\textwidth,trim={10cm 2cm 10cm 0},clip]{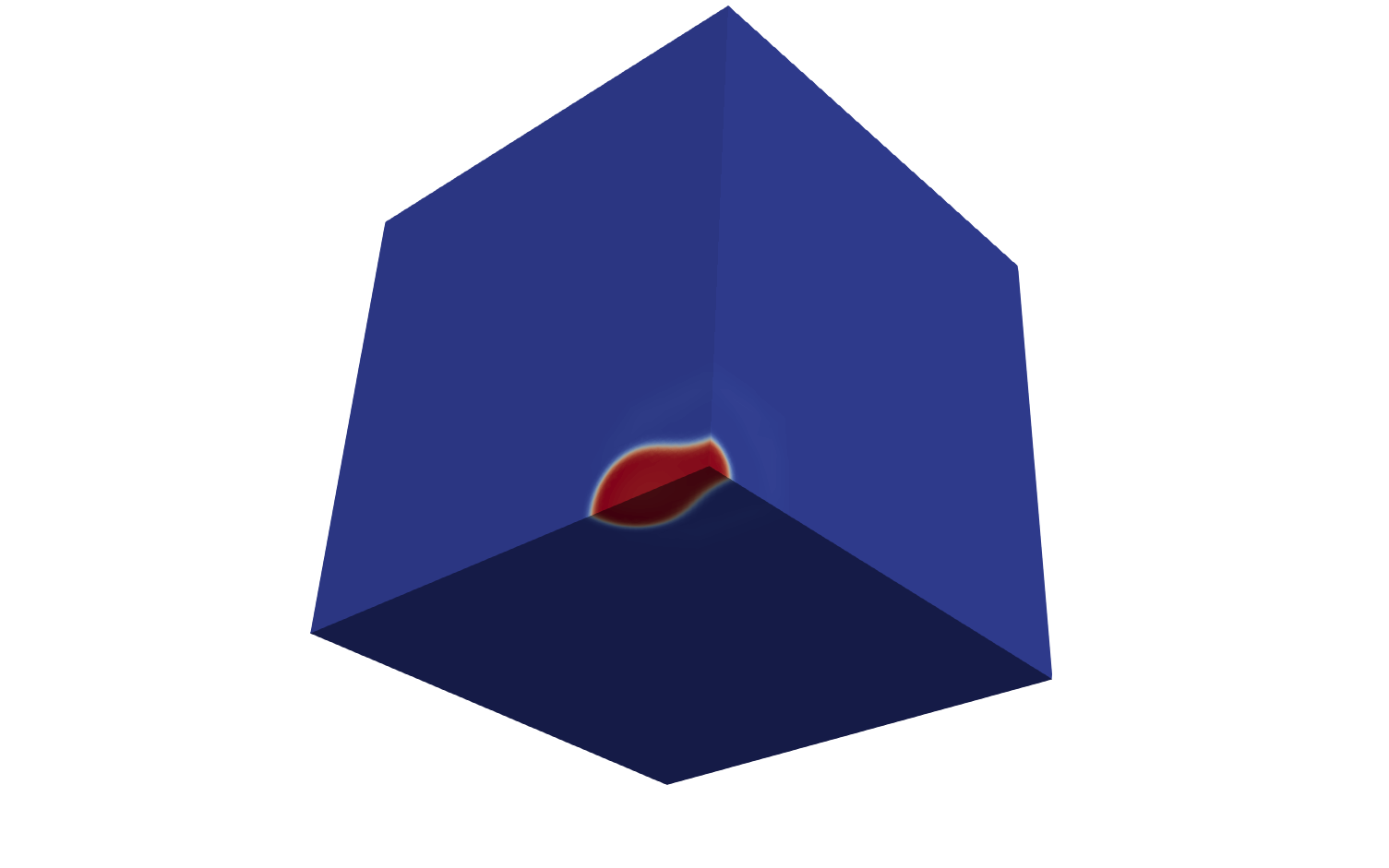}}
	\subfloat
	{\includegraphics[width=0.32\textwidth,trim={10cm 2cm 10cm 0},clip]{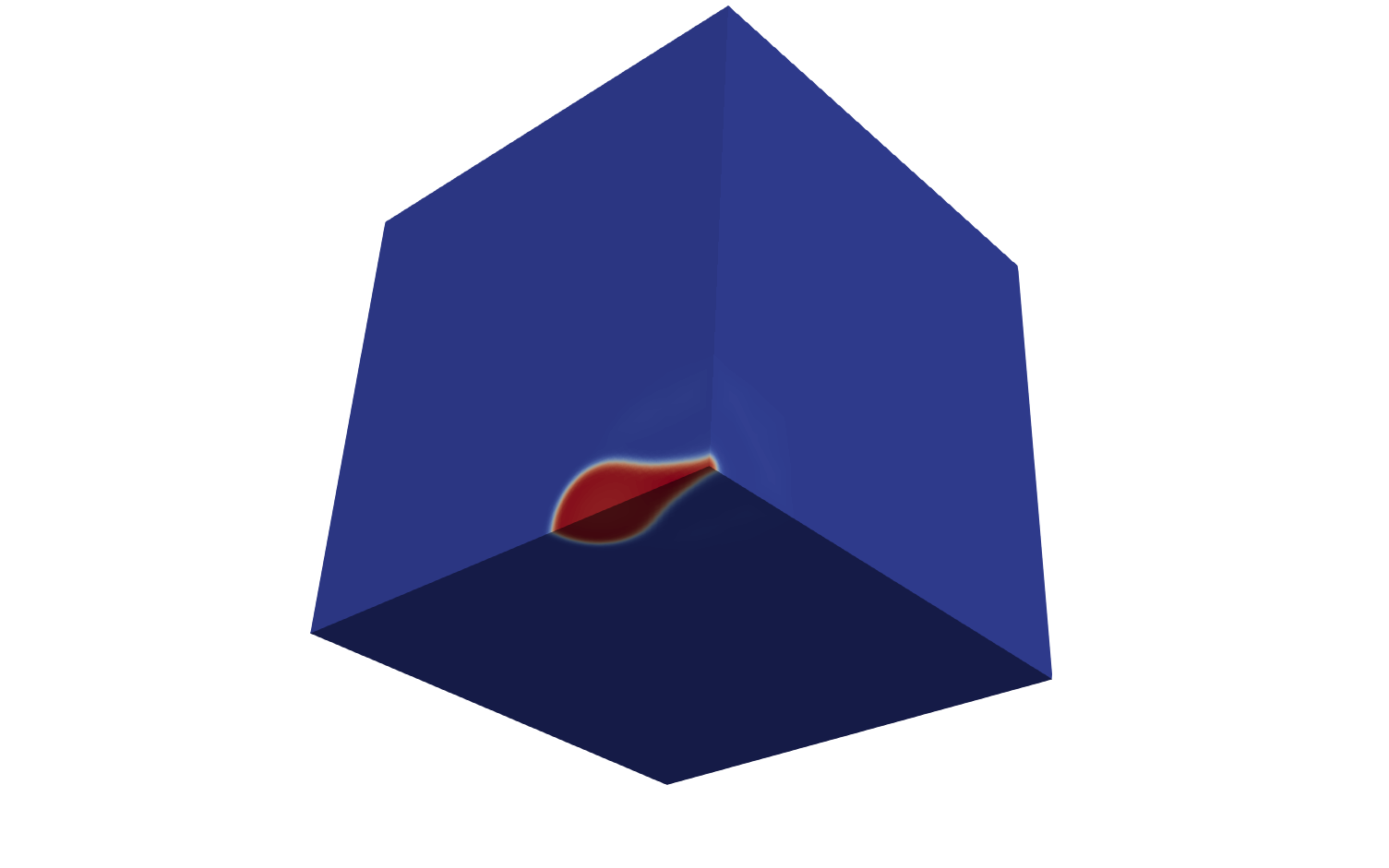}}
	\\
	\subfloat
	{\includegraphics[width=0.32\textwidth,trim={10cm 0 10cm 0},clip]{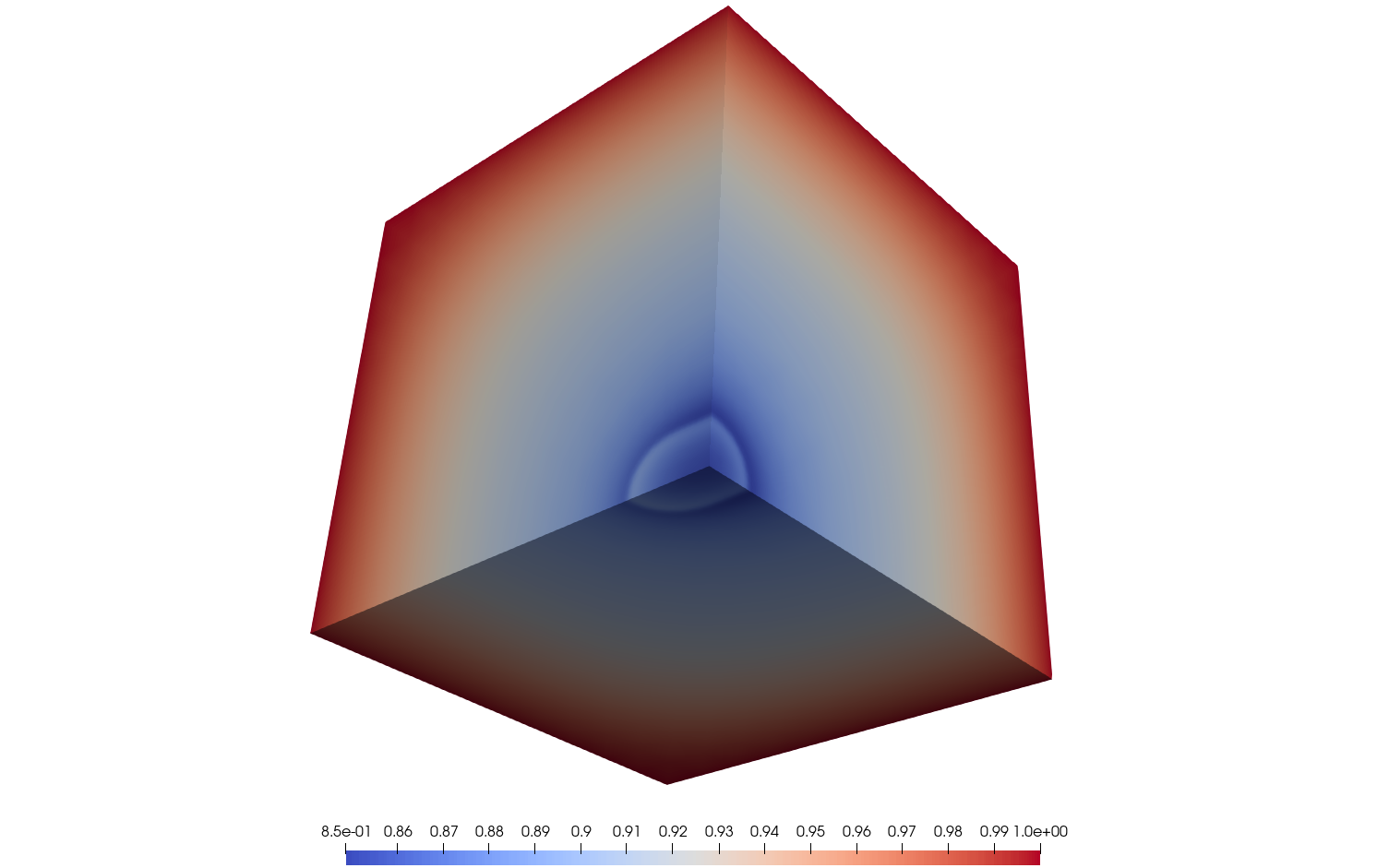}}
	\subfloat
	{\includegraphics[width=0.32\textwidth,trim={10cm 0 10cm 0},clip]{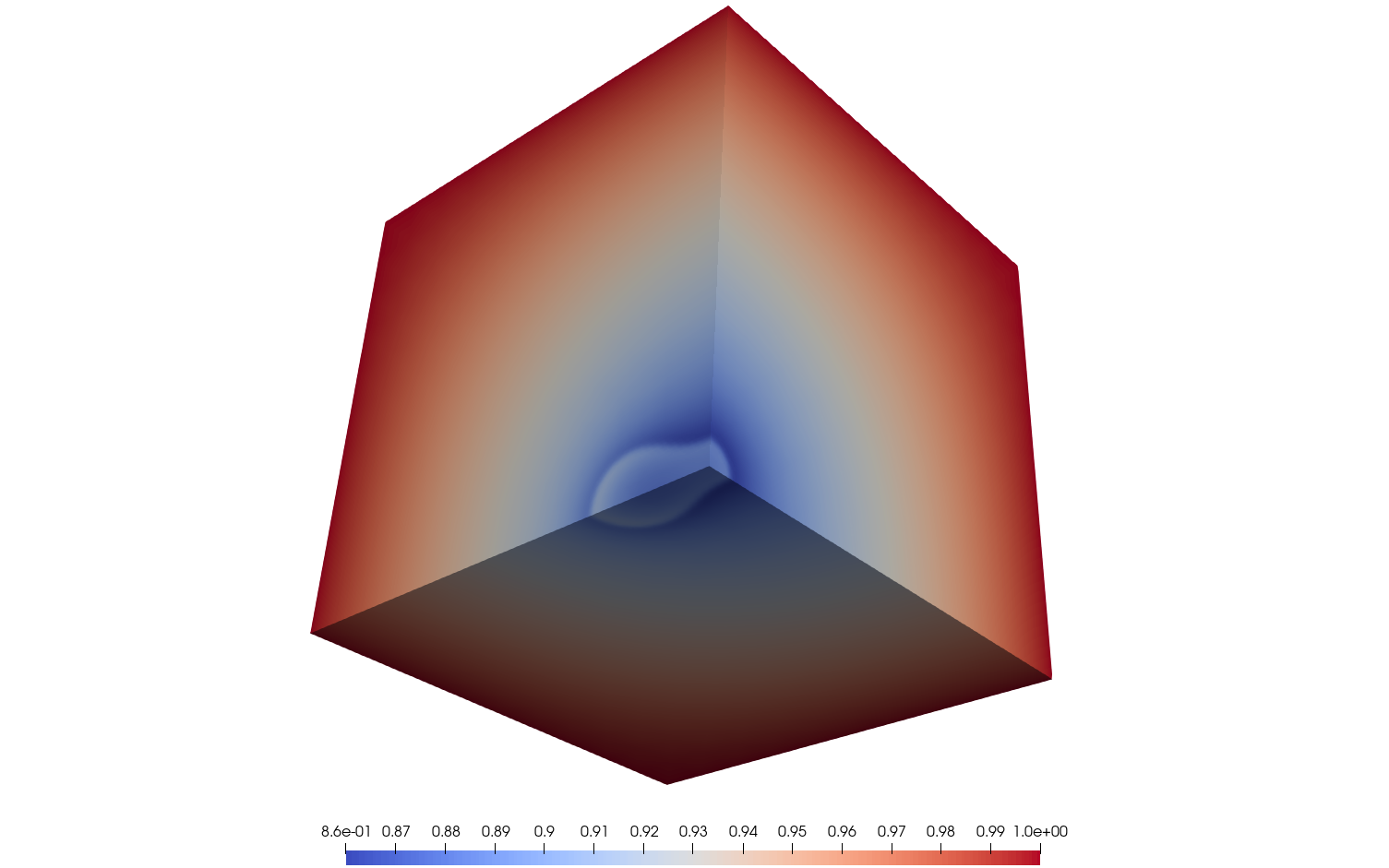}}
	\subfloat
	{\includegraphics[width=0.32\textwidth,trim={10cm 0 10cm 0},clip]{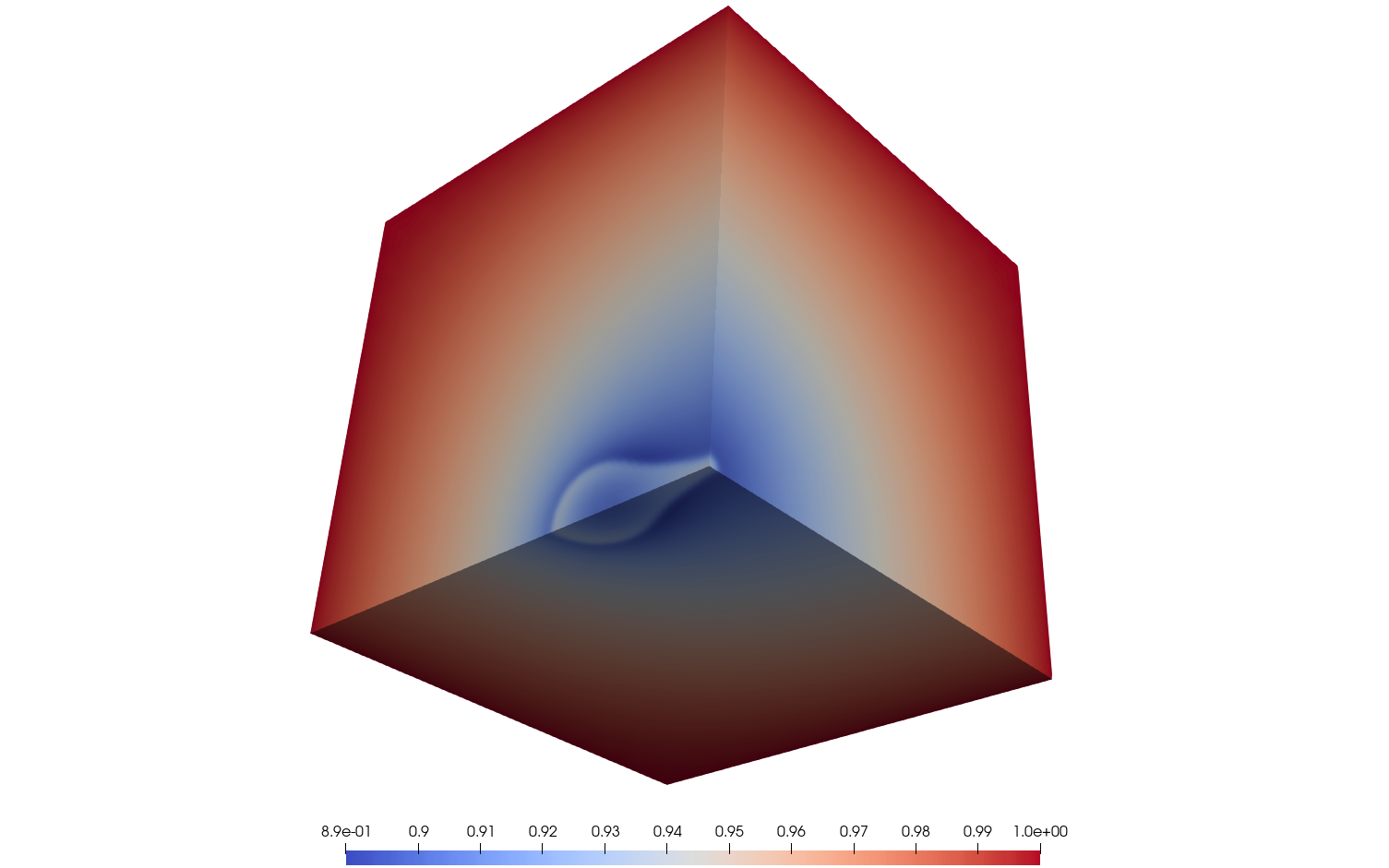}}
	\\
	\subfloat
	{\includegraphics[width=0.32\textwidth,trim={10cm 10cm 10cm 7cm},clip]{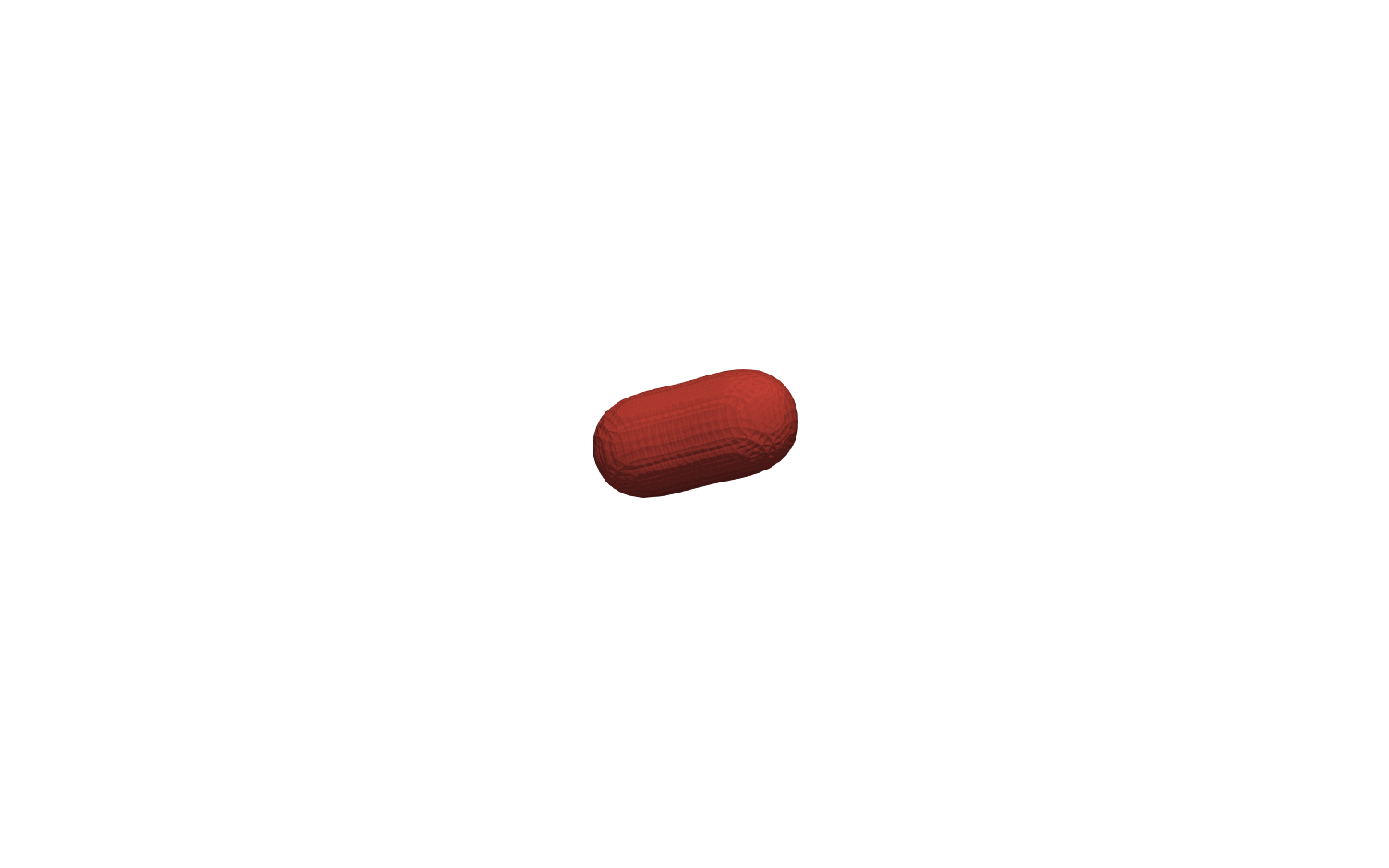}}
	\subfloat
	{\includegraphics[width=0.32\textwidth,trim={10cm 10cm 10cm 7cm},clip]{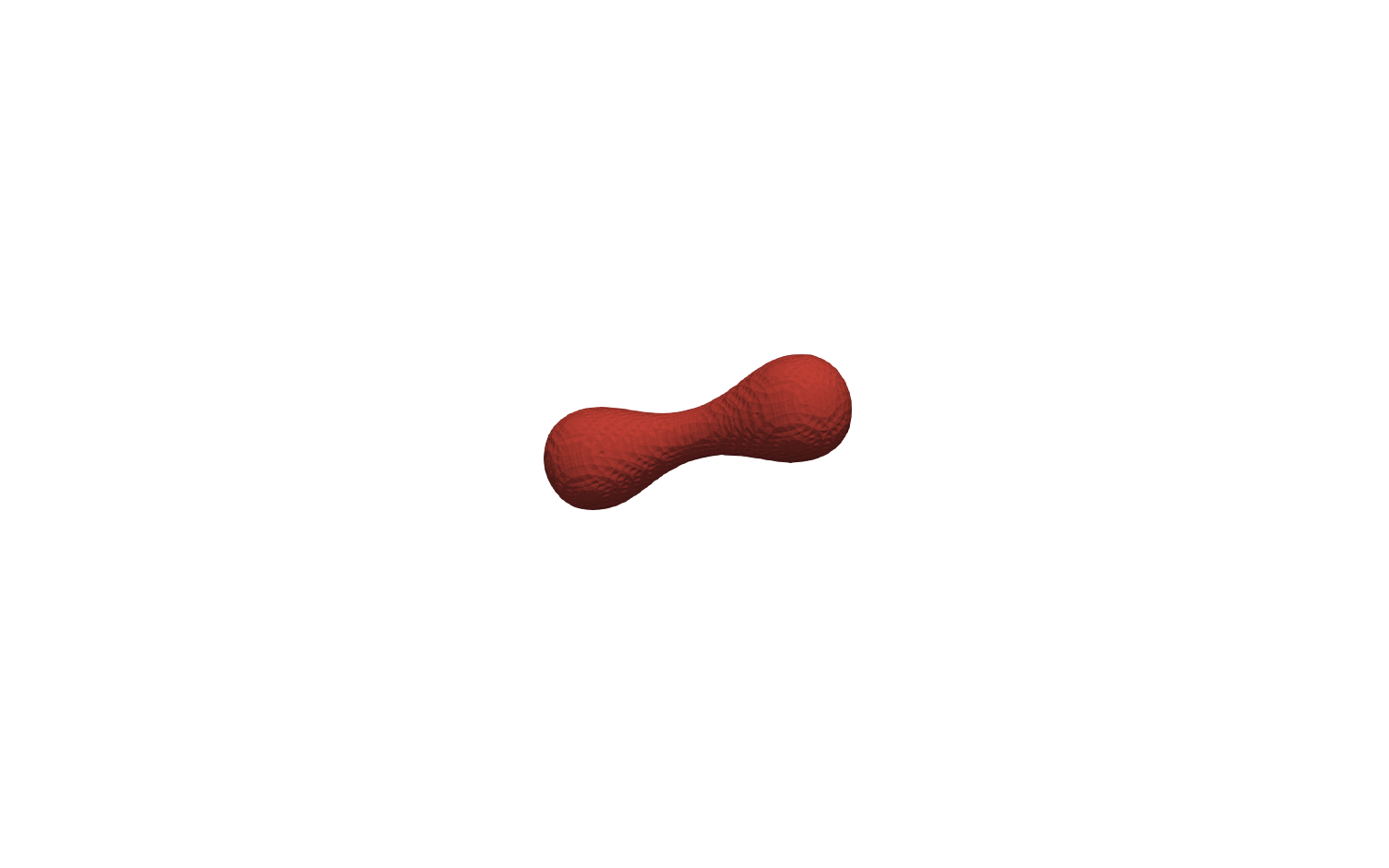}}
	\subfloat
	{\includegraphics[width=0.32\textwidth,trim={10cm 10cm 10cm 7cm},clip]{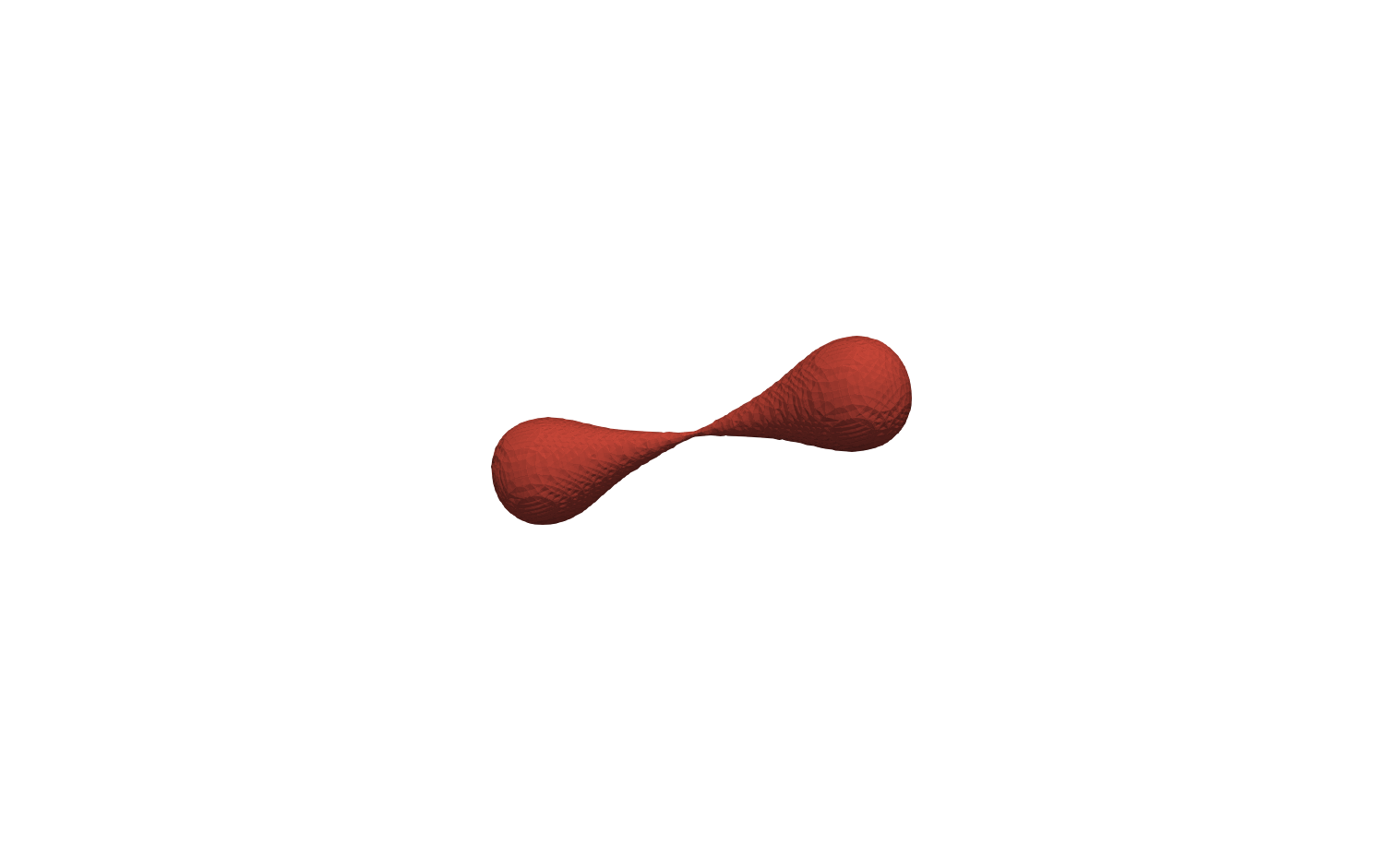}}
	\caption{Numerical solution in three dimensions with initial profile \eqref{eq:initial_3D_1} and chemotaxis parameter $\chi_\phi=30$ at times $t=0.5$ (left), $t=1$ (center) and $t=1.5$ (right).} 
	\label{fig:3D_4a}
\end{figure}


\begin{figure}[ht]
    \centering
	\subfloat
	{\includegraphics[width=0.32\textwidth,trim={10cm 2cm 10cm 0},clip]{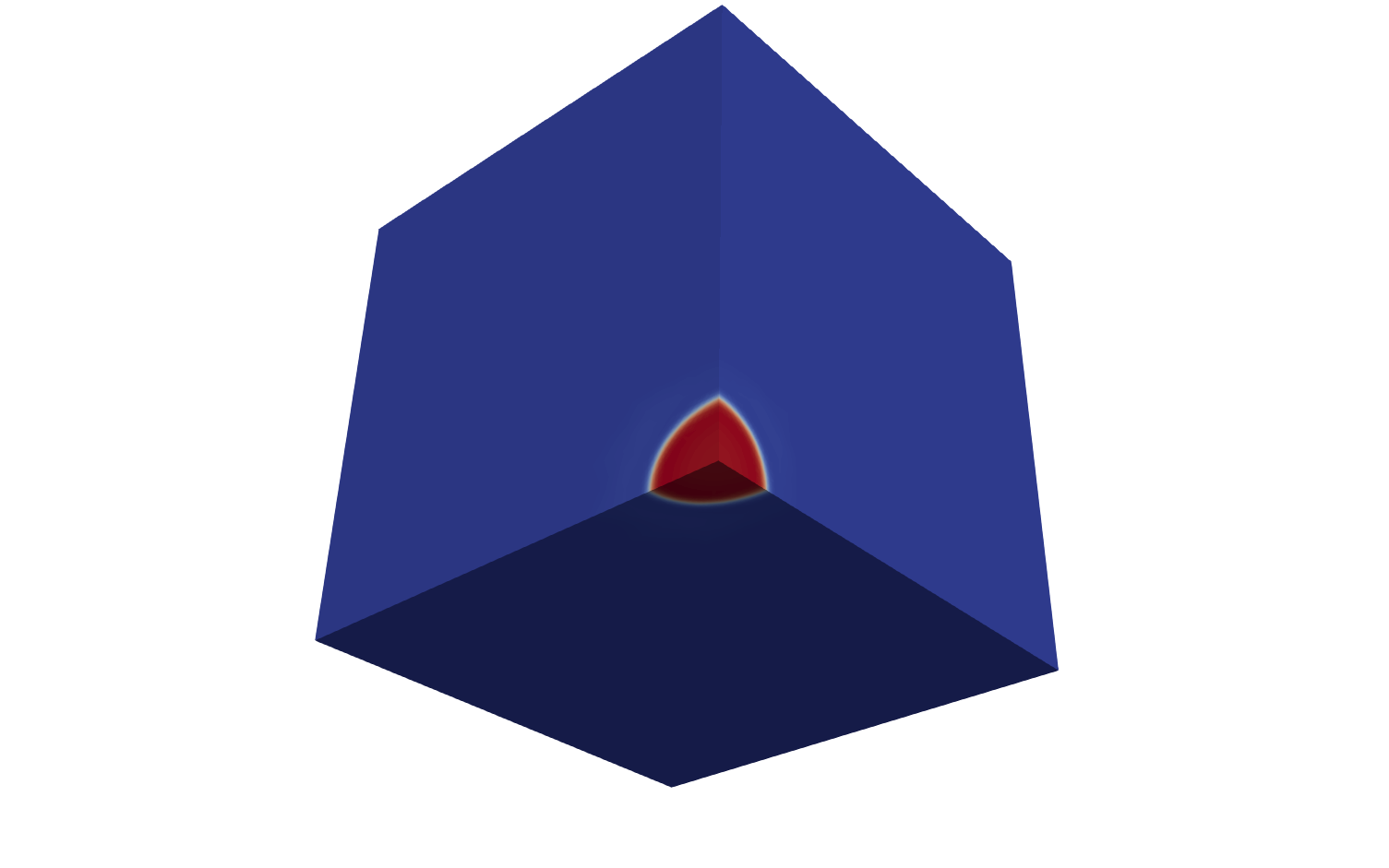}}
	\subfloat
	{\includegraphics[width=0.32\textwidth,trim={10cm 2cm 10cm 0},clip]{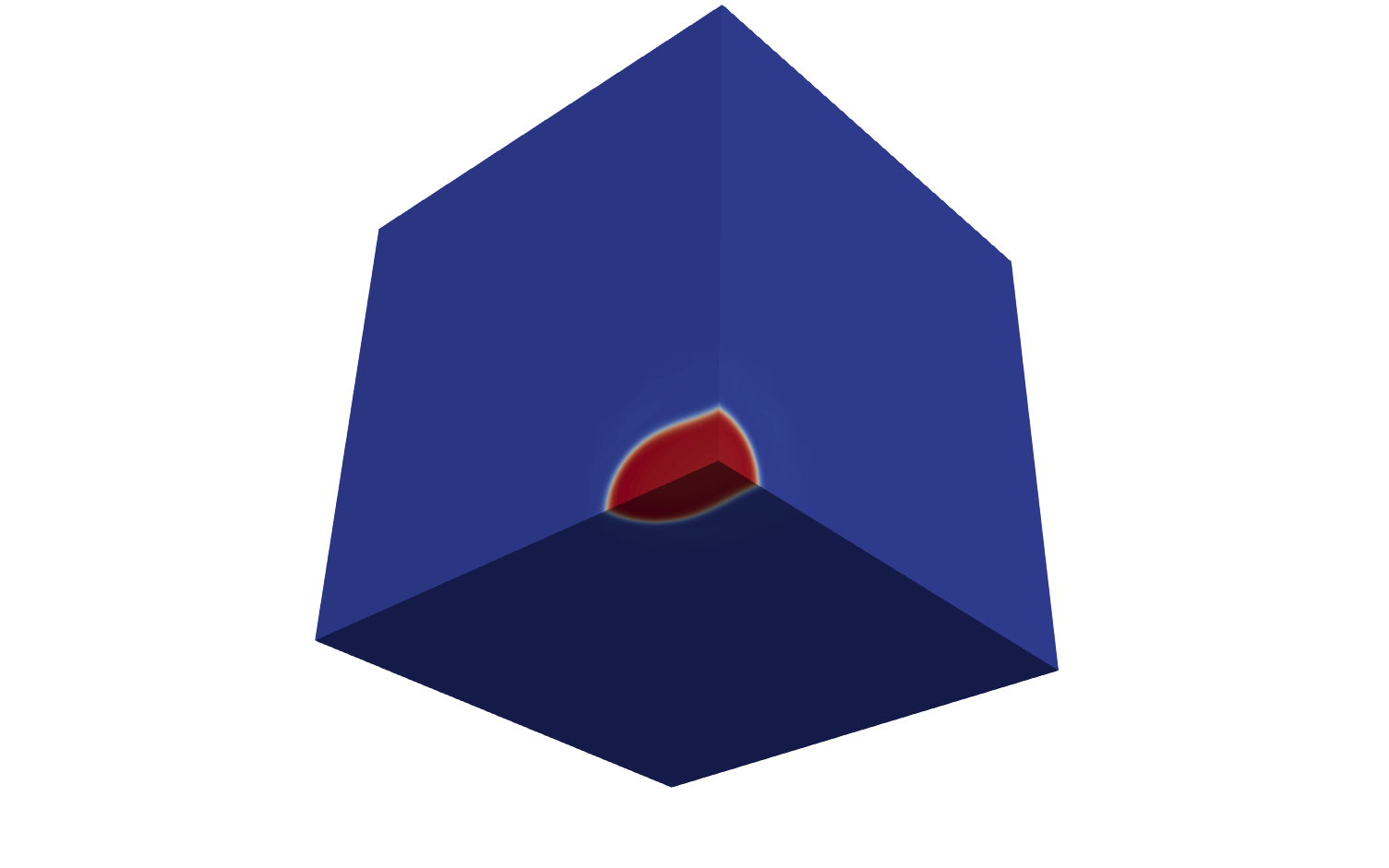}}
	\subfloat
	{\includegraphics[width=0.32\textwidth,trim={10cm 2cm 10cm 0},clip]{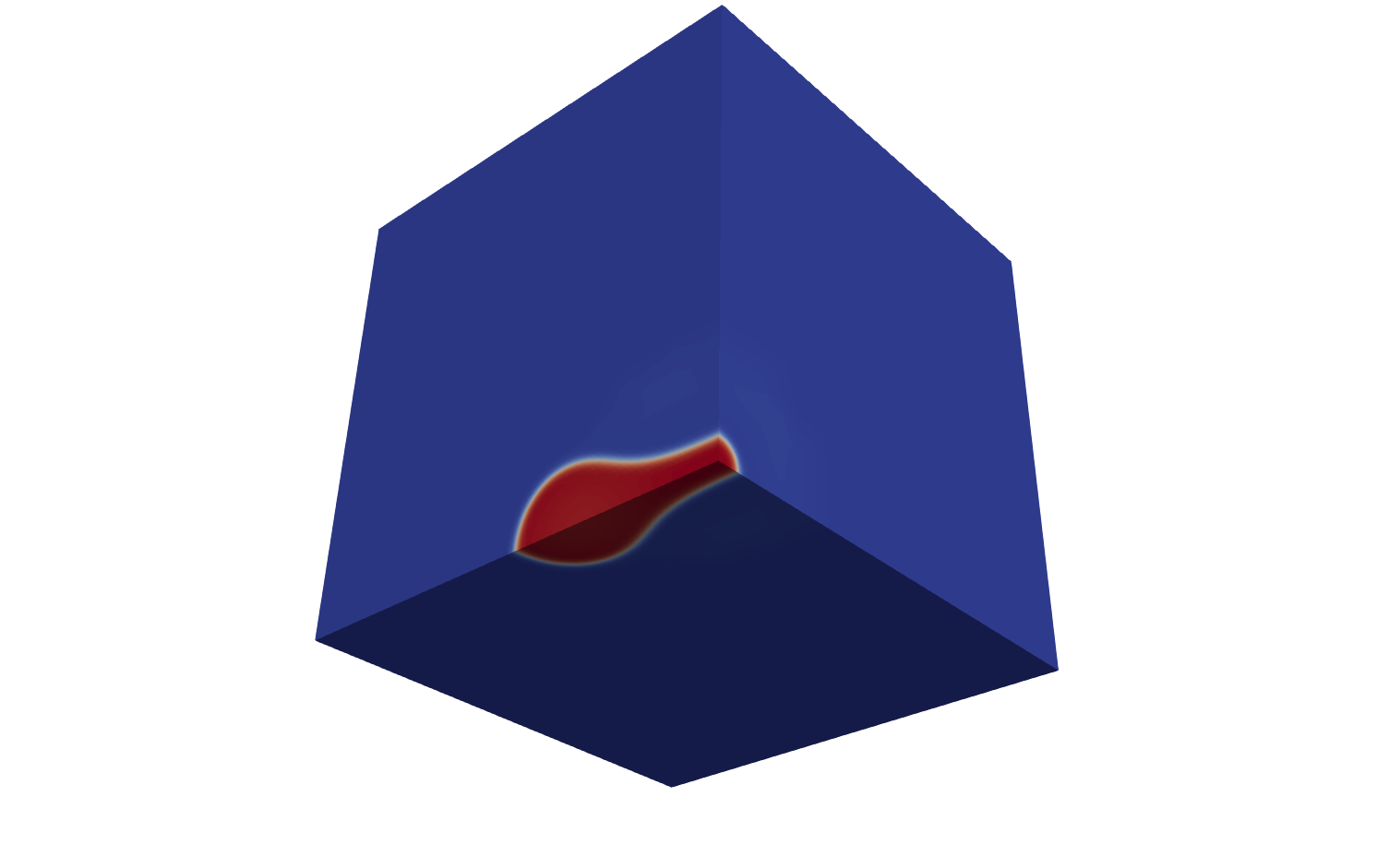}}
	\\
	\subfloat
	{\includegraphics[width=0.32\textwidth,trim={10cm 0 10cm 0},clip]{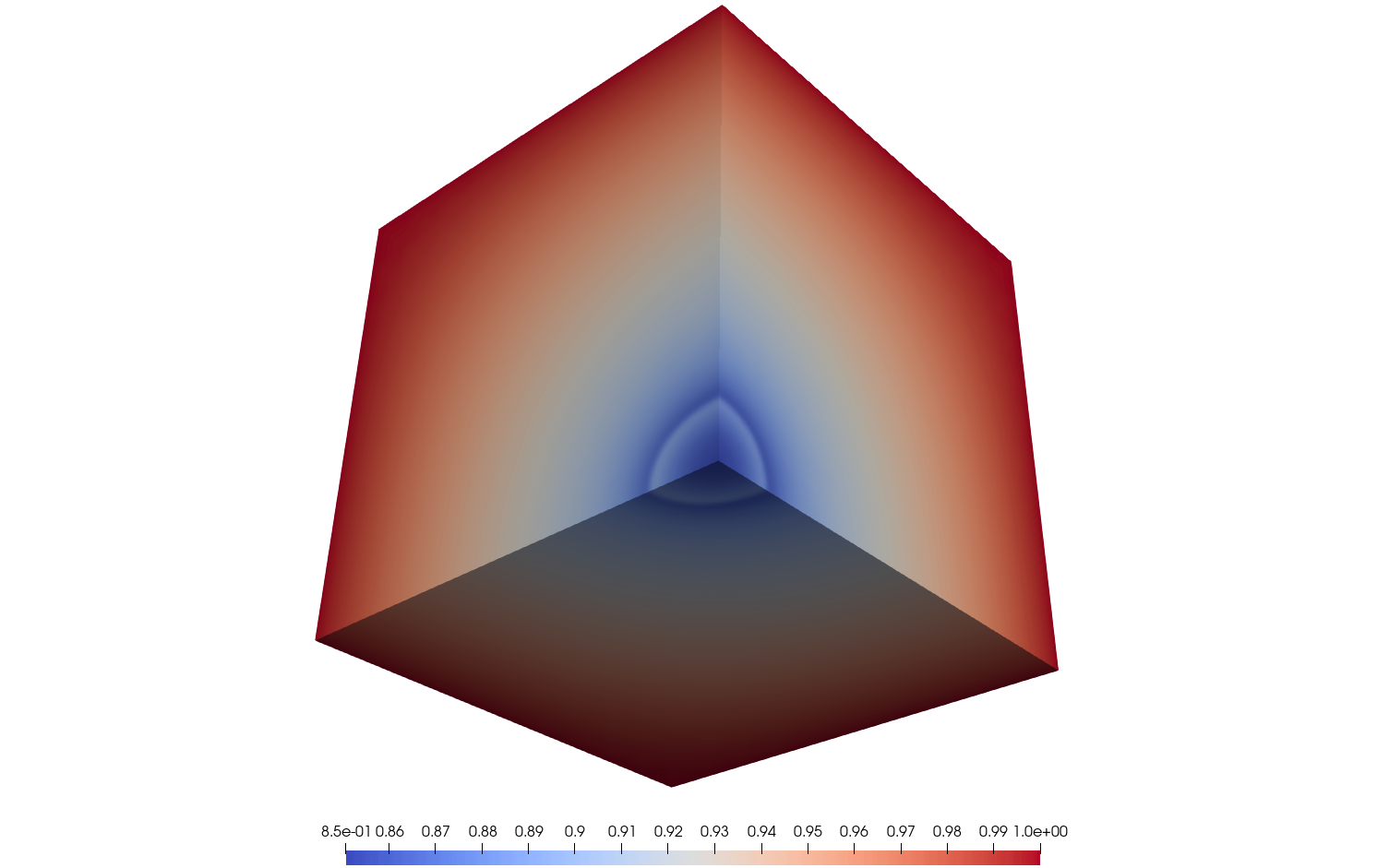}}
	\subfloat
	{\includegraphics[width=0.32\textwidth,trim={10cm 0 10cm 0},clip]{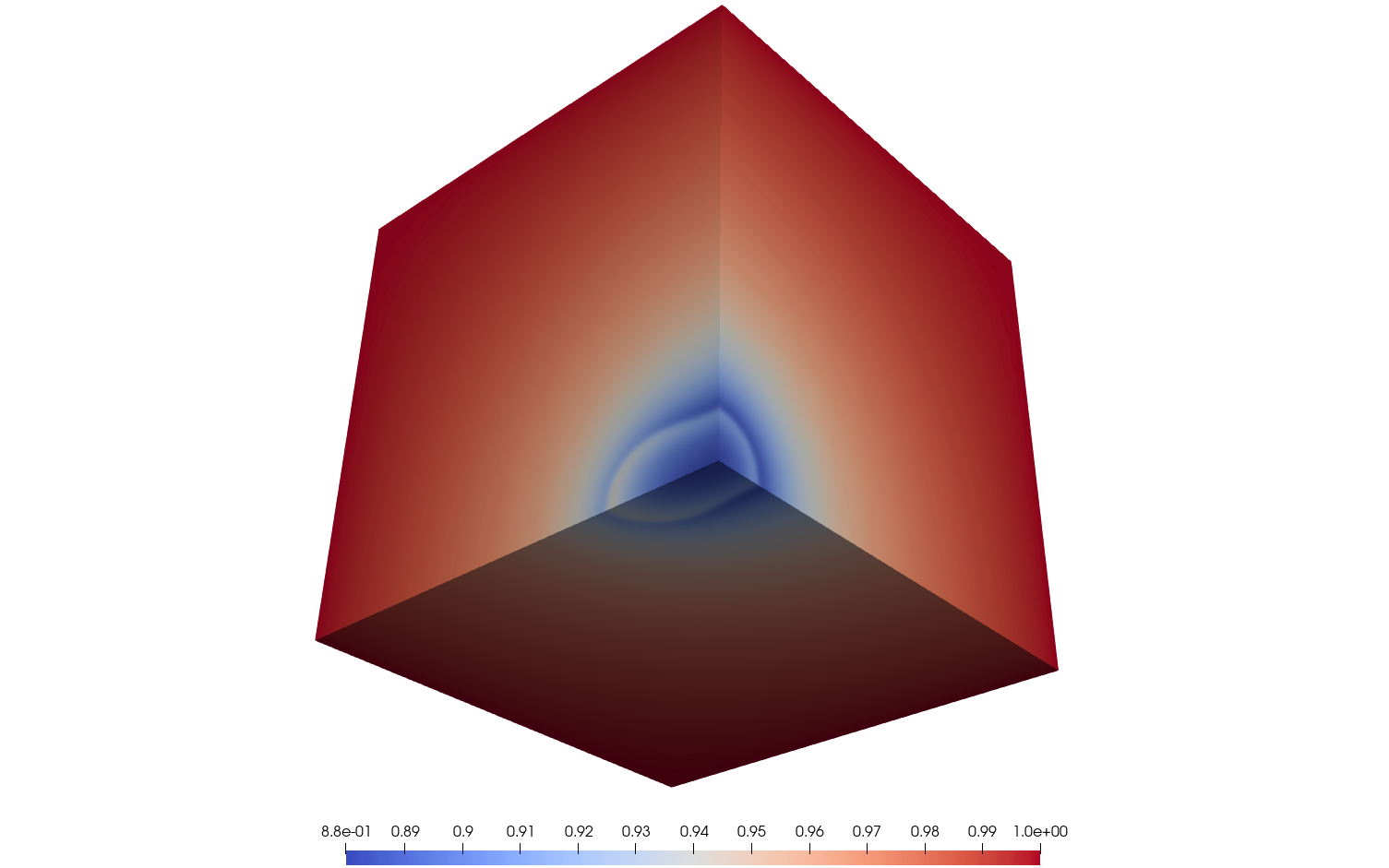}}
	\subfloat
	{\includegraphics[width=0.32\textwidth,trim={10cm 0 10cm 0},clip]{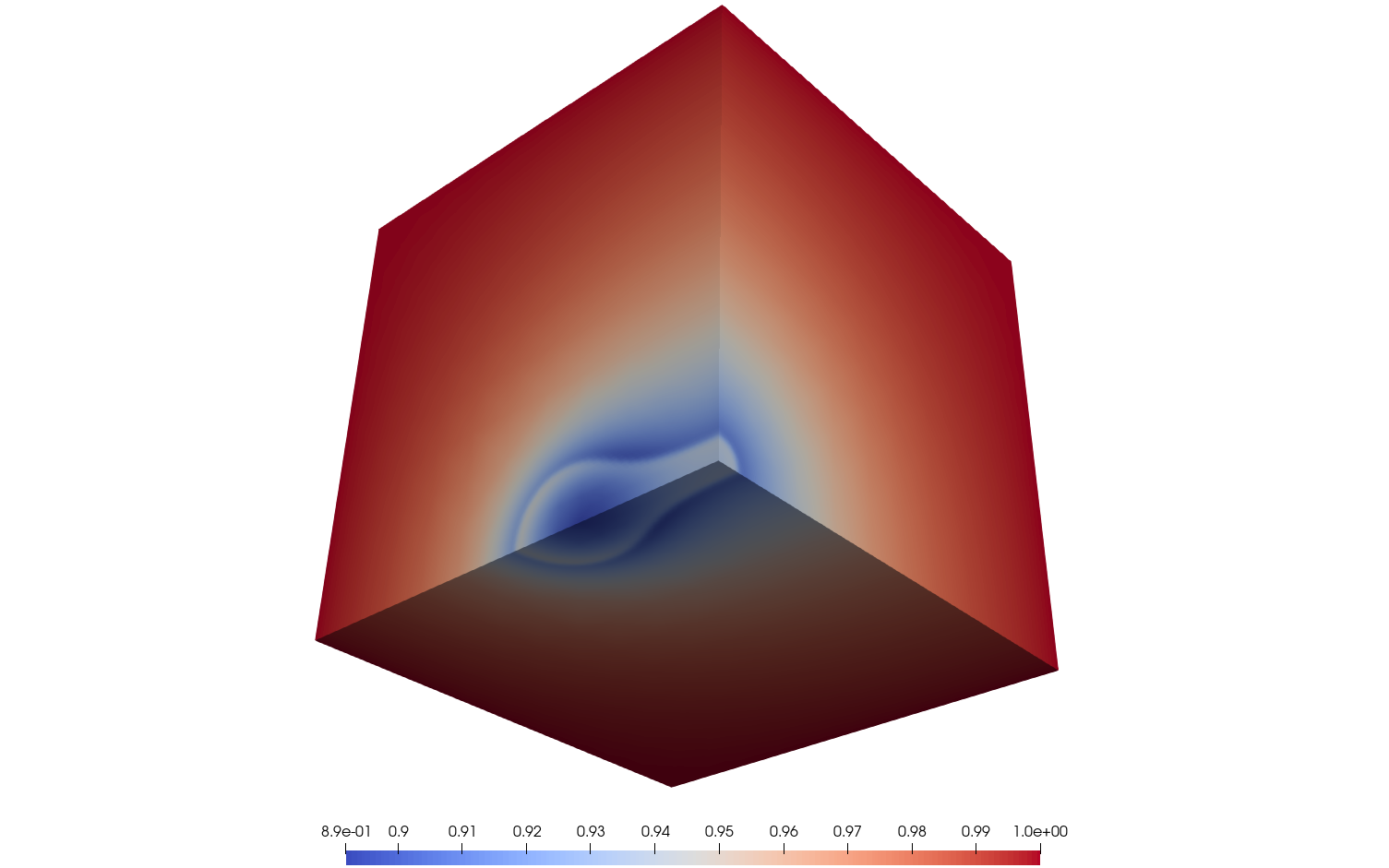}}
	\\
	\subfloat
	{\includegraphics[width=0.32\textwidth,trim={10cm 10cm 10cm 7cm},clip]{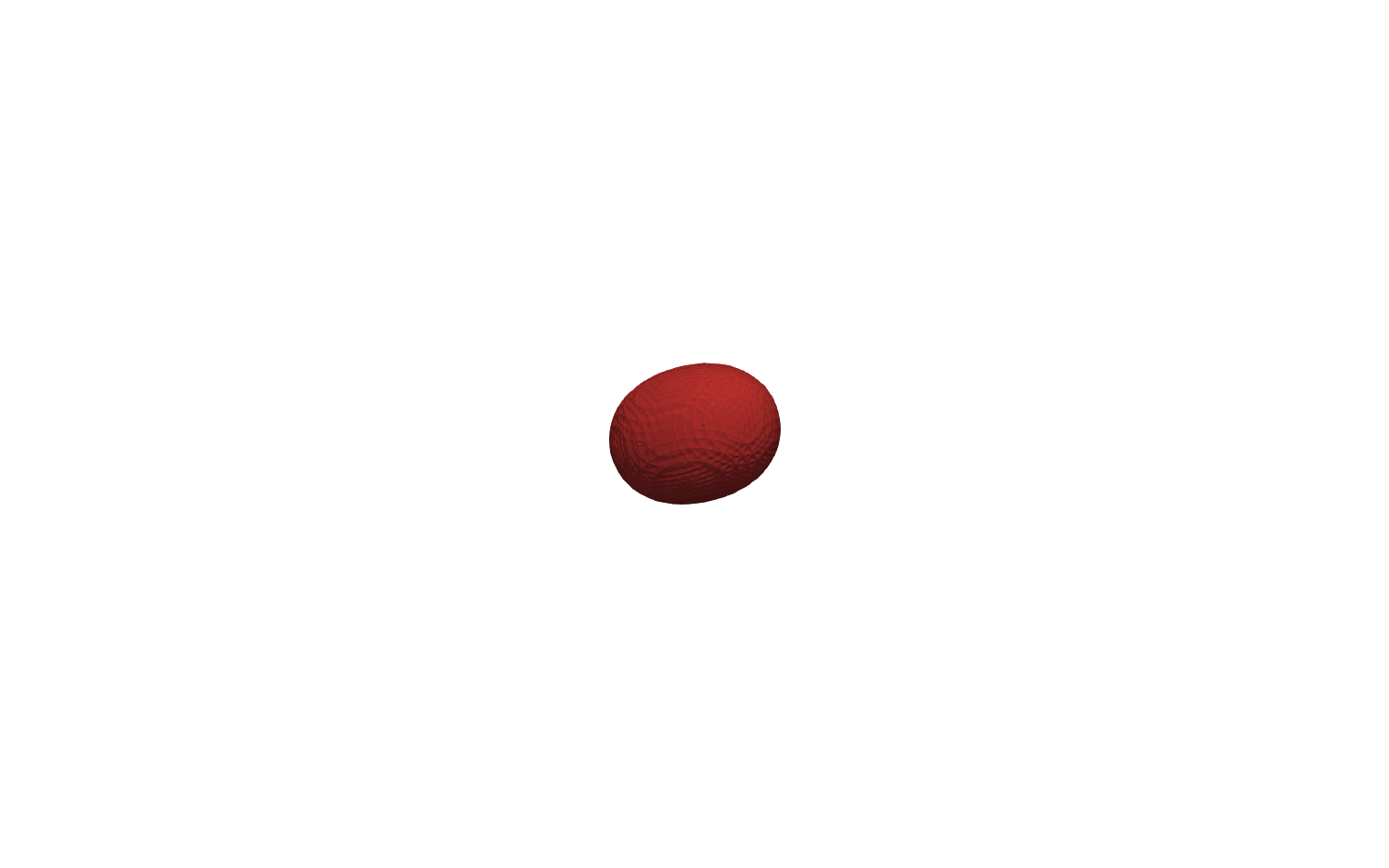}}
	\subfloat
	{\includegraphics[width=0.32\textwidth,trim={10cm 10cm 10cm 7cm},clip]{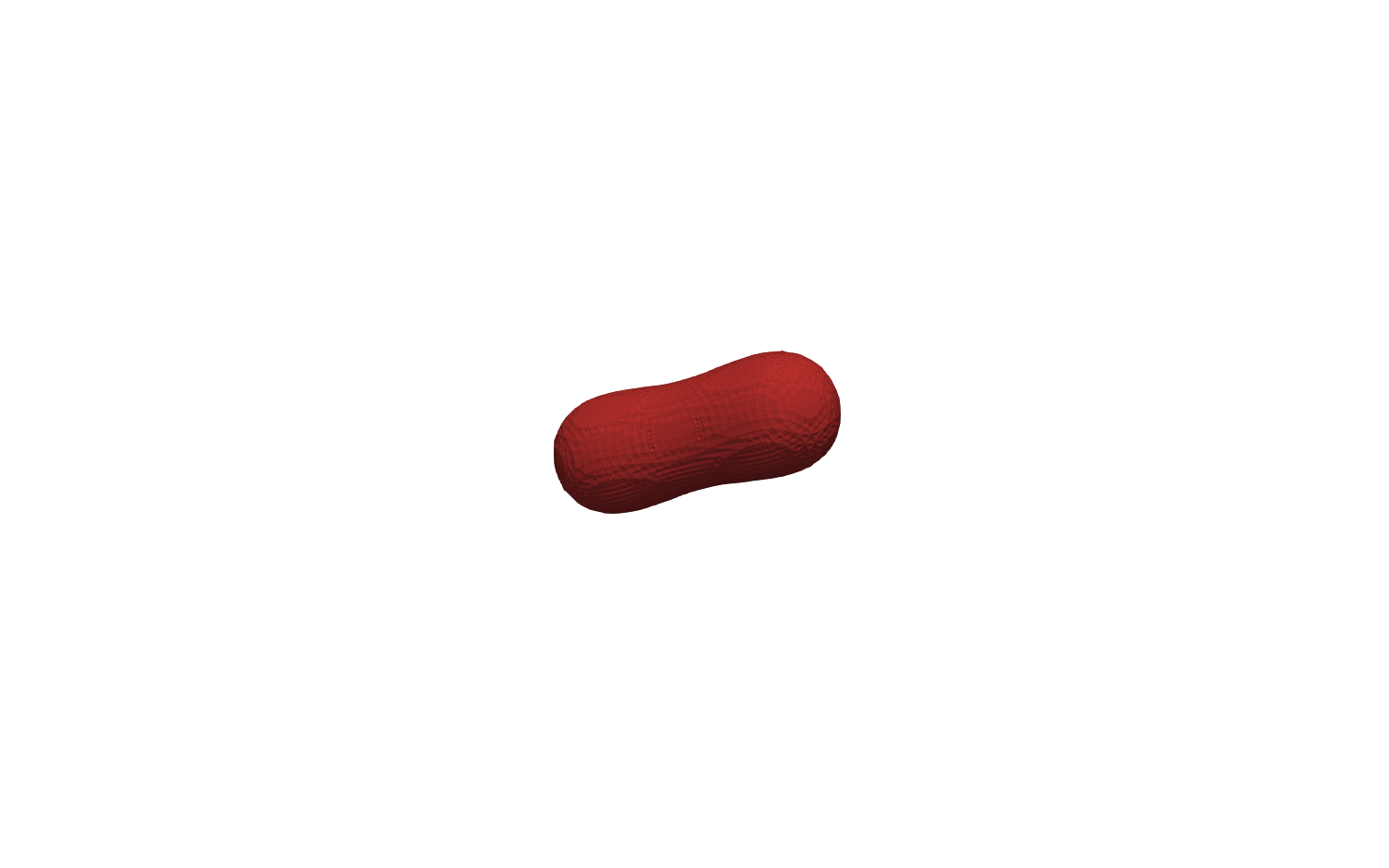}}
	\subfloat
	{\includegraphics[width=0.32\textwidth,trim={10cm 10cm 10cm 7cm},clip]{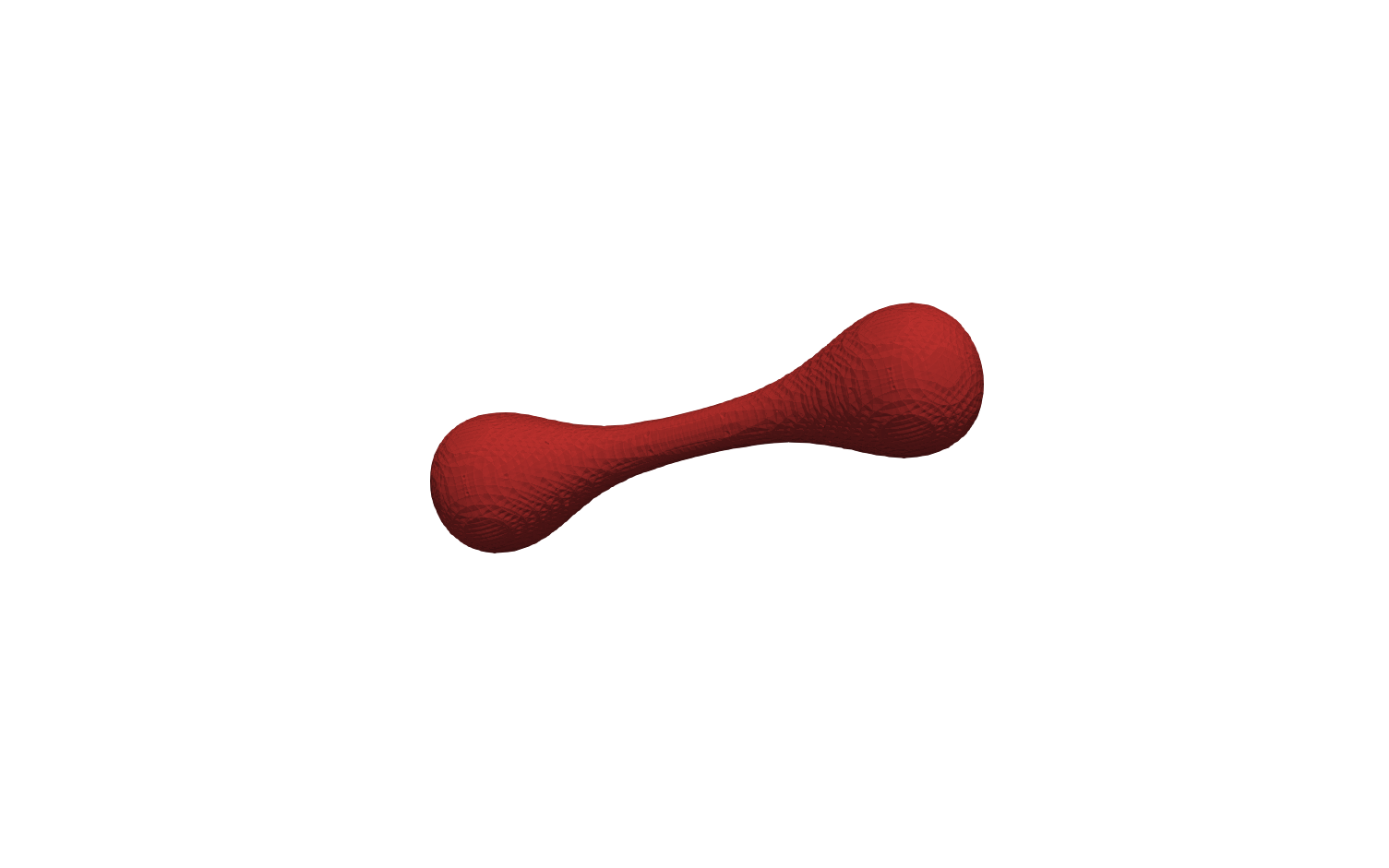}}
	\caption{Numerical solution in three dimensions with initial profile \eqref{eq:initial_3D_1} and chemotaxis parameter $\chi_\phi=15$ at times $t=1$ (left), $t=3$ (center) and $t=5$ (right).} 
	\label{fig:3D_4c}
\end{figure}

In the second example, 
we use the following initial data:
\begin{align}
\label{eq:initial_3D_2}
\begin{split}
    \phi_0(x) &= - \tanh\Big( \frac{r(x)-0.2}{\sqrt{2}\epsilon} \Big),
    \\
    r(x) &= \frac{1}{3} \big( \abs{x_1} + \abs{x_2} + \abs{x_3} \big),
    \\
    \sigma_0(x) &= 0.9,
    \\
    x &= (x_1, x_2, x_3)^T,
\end{split}
\end{align}
and we use $\lambda_p = 0.1$ and $\chi_\phi = 50$. The other parameters are chosen like in \eqref{eq:3d_parameter}. We visualize the solution at times $t=0.1, 0.2, 0.3$ in Figure \ref{fig:3D_2}. As before, we visualize $\phi$ (top row) and $\sigma$ (middle row) on the subdomain $(0,3)^3$ of $(-3,3)^3$. In the bottom row, the shape of the tumour within the whole domain $(-3,3)^3$ is presented.
One can clearly see that an instability with six enhanced fingers arises.

\begin{figure}[ht]
    \centering
	\subfloat
	{\includegraphics[width=0.32\textwidth,trim={10cm 2cm 10cm 0},clip]{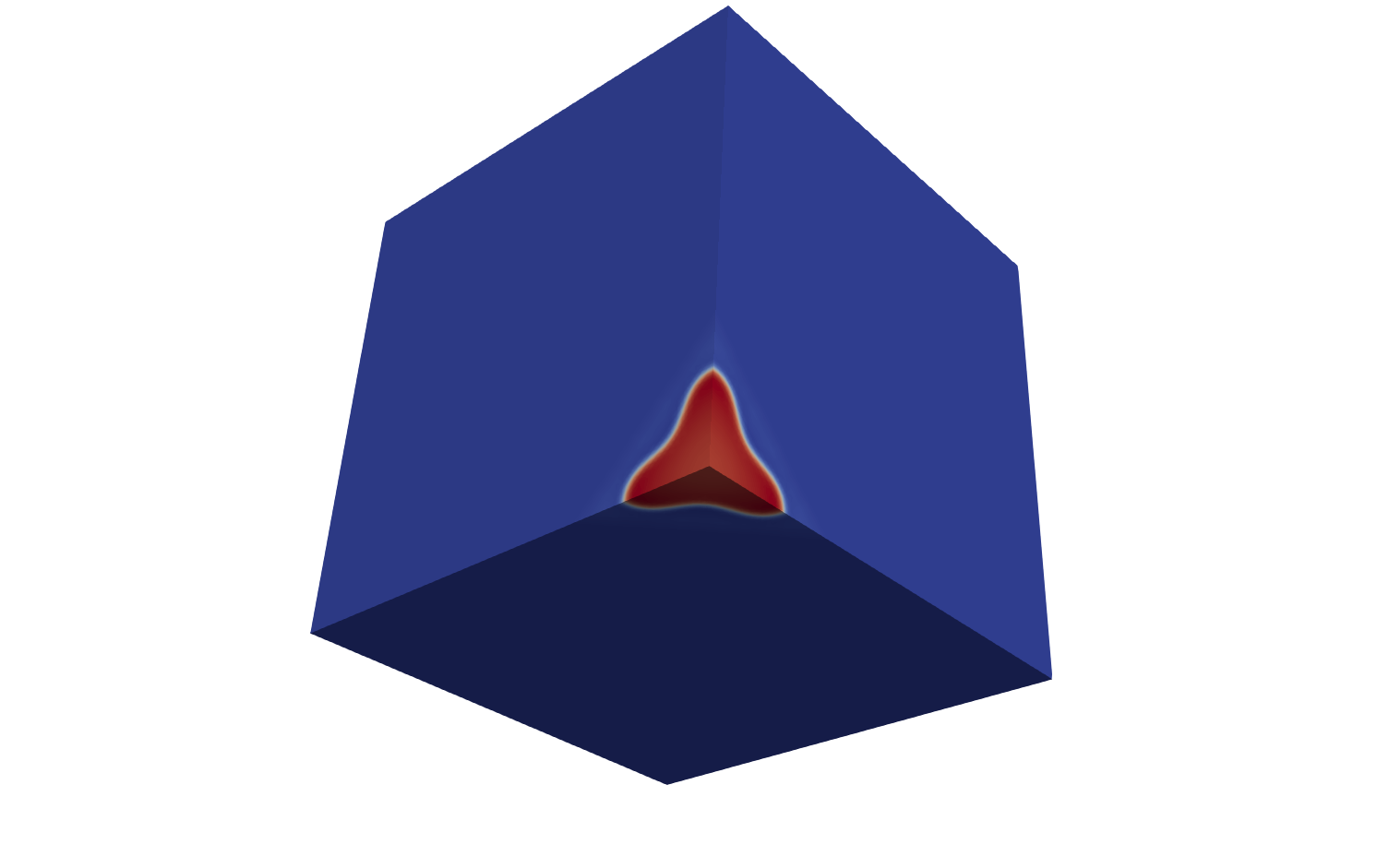}}
	\subfloat
	{\includegraphics[width=0.32\textwidth,trim={10cm 2cm 10cm 0},clip]{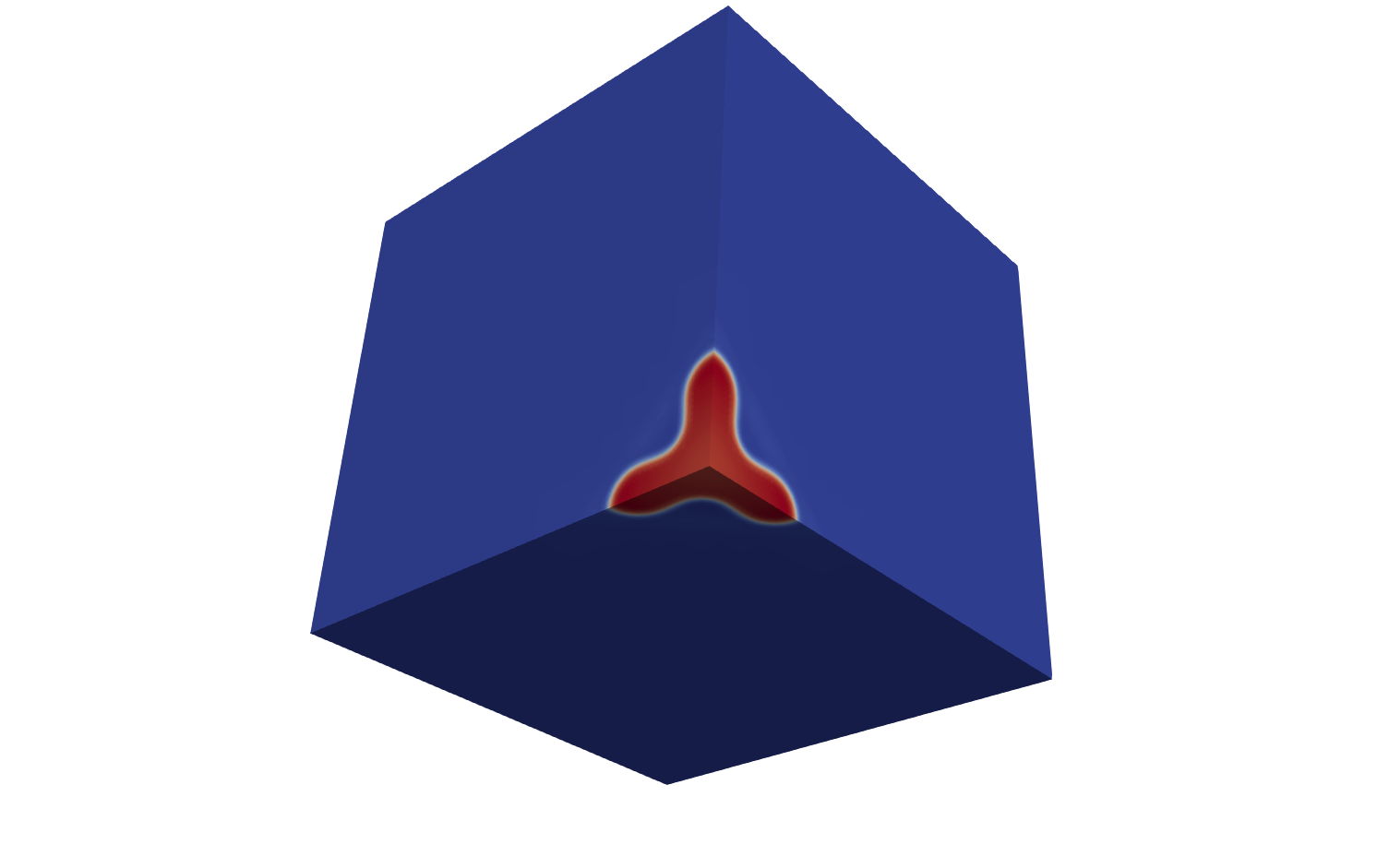}}
	\subfloat
	{\includegraphics[width=0.32\textwidth,trim={10cm 2cm 10cm 0},clip]{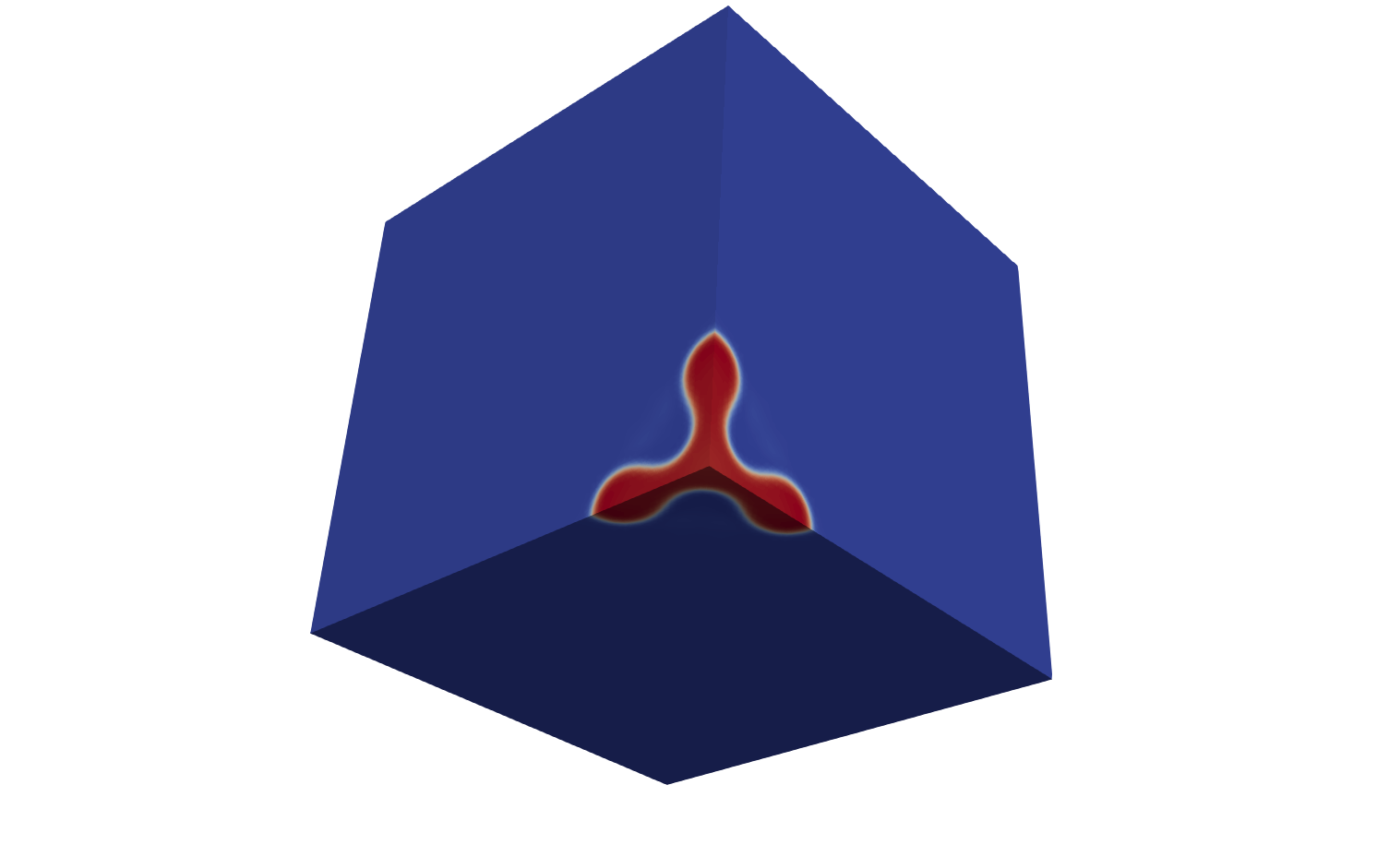}}
	\\
	\subfloat
	{\includegraphics[width=0.32\textwidth,trim={10cm 0 10cm 0},clip]{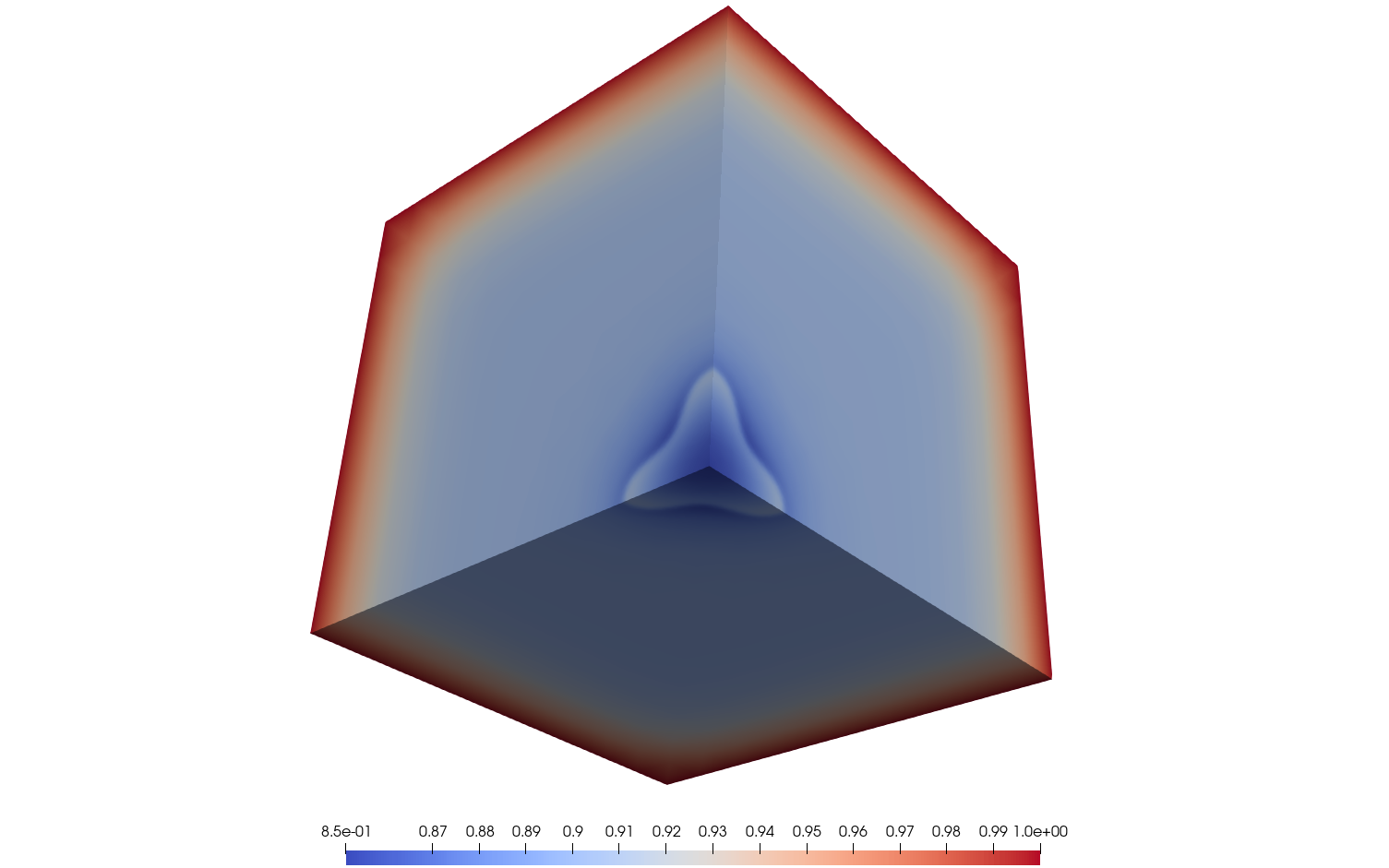}}
	\subfloat
	{\includegraphics[width=0.32\textwidth,trim={10cm 0 10cm 0},clip]{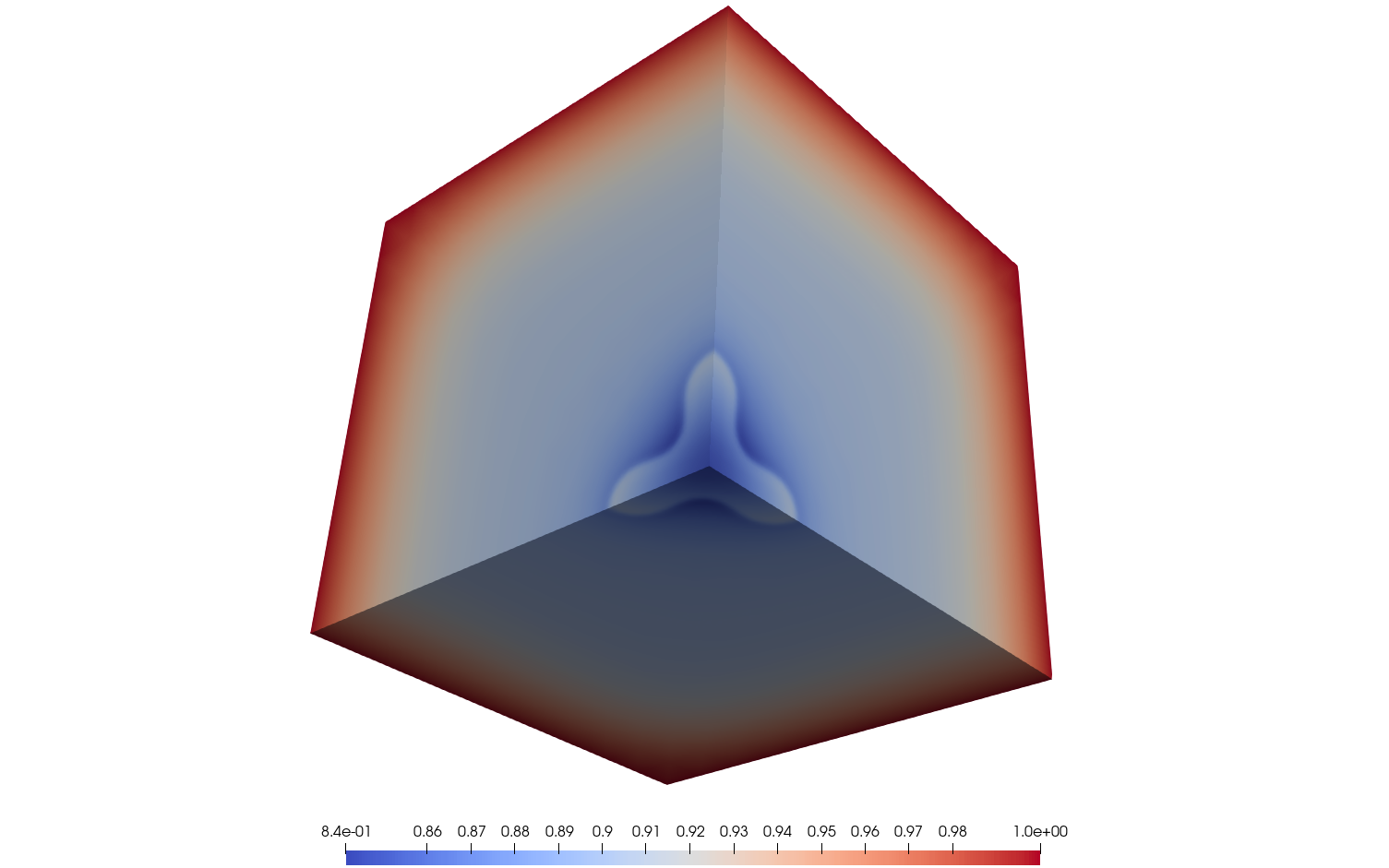}}
	\subfloat
	{\includegraphics[width=0.32\textwidth,trim={10cm 0 10cm 0},clip]{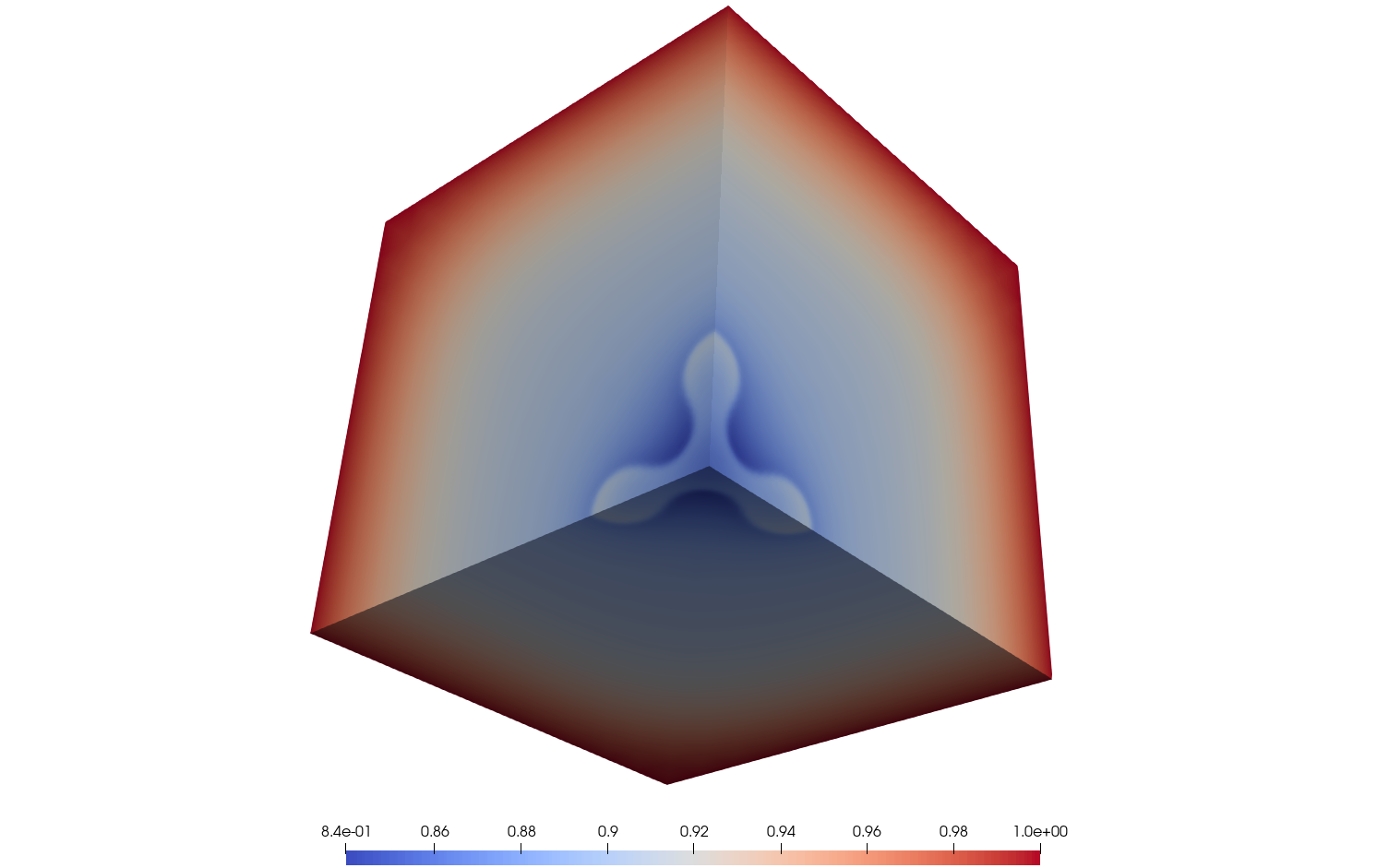}}
	\\
	\subfloat
	{\includegraphics[width=0.32\textwidth,trim={10cm 10cm 10cm 7cm},clip]{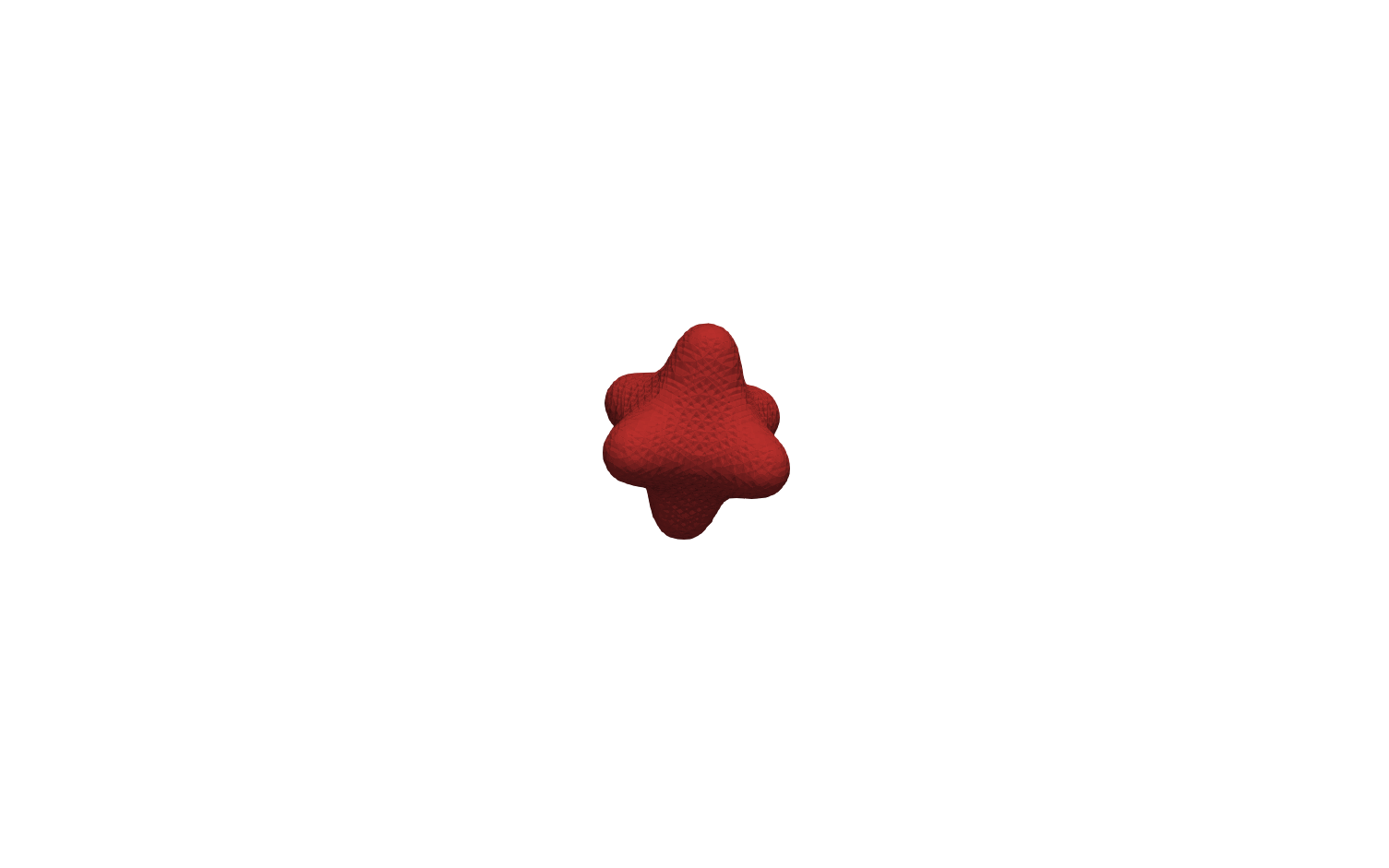}}
	\subfloat
	{\includegraphics[width=0.32\textwidth,trim={10cm 10cm 10cm 7cm},clip]{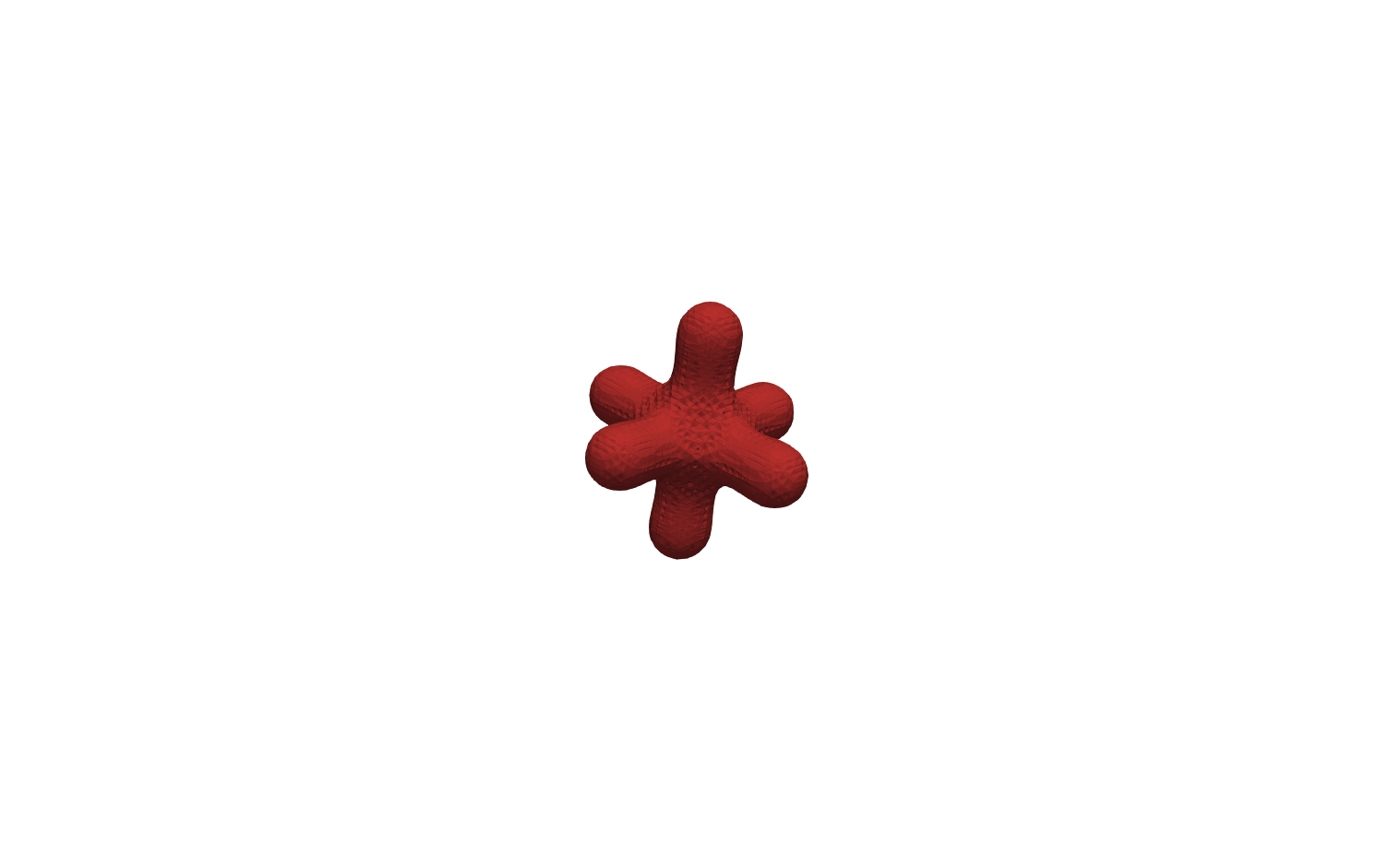}}
	\subfloat
	{\includegraphics[width=0.32\textwidth,trim={10cm 10cm 10cm 7cm},clip]{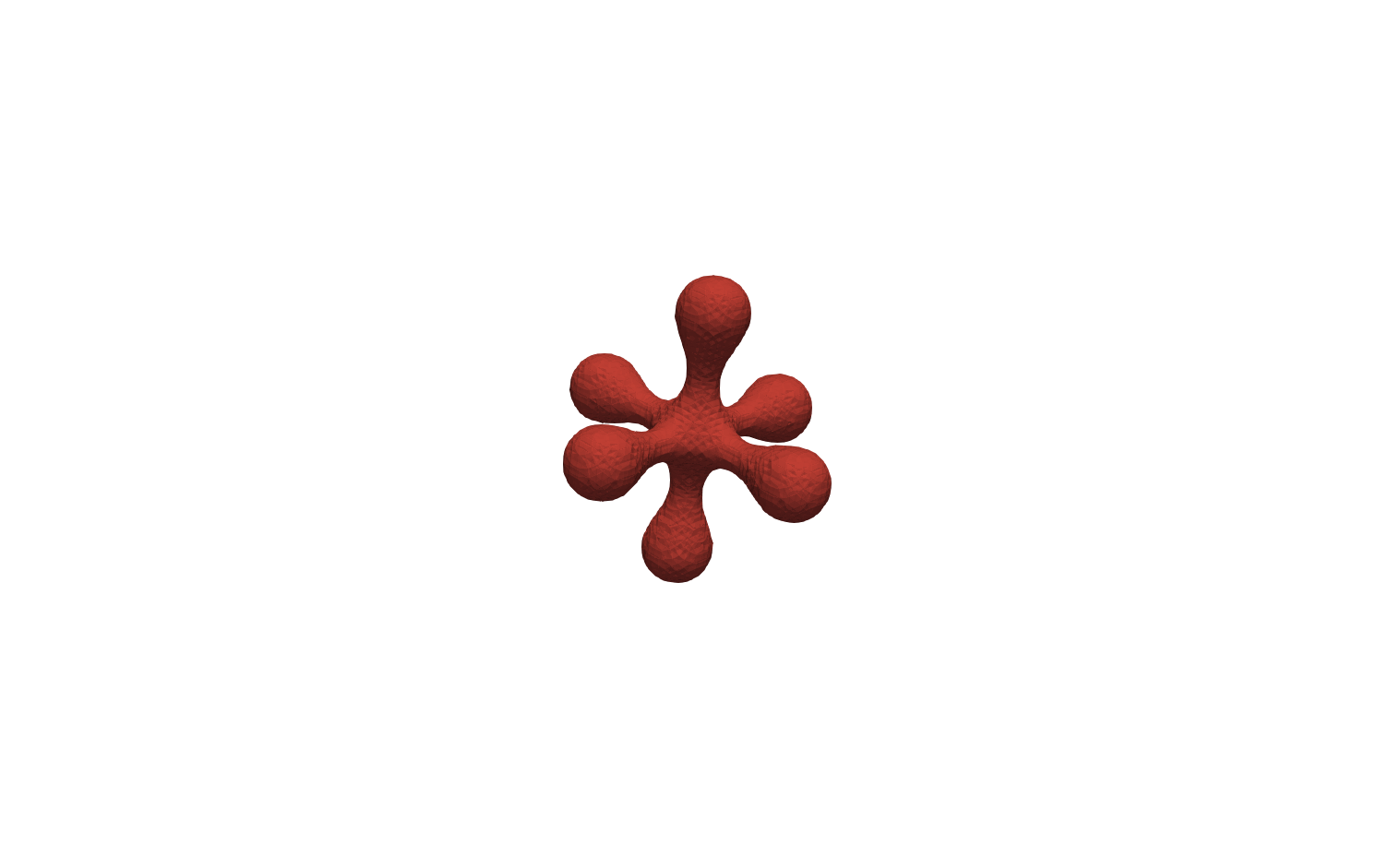}}
	\caption{Numerical solution in three dimensions with initial profile \eqref{eq:initial_3D_2} at times $t=0.1$ (left), $t=0.2$ (center) and $t=0.3$ (right).} 
	\label{fig:3D_2}
\end{figure}

\printbibliography


\end{document}